\title{On classical upper bounds for slice genera}
\author{Peter Feller}
\address{Department of Mathematics, ETH Z\"urich, R\"amistrasse 101, 8092 Z\"urich, Switzerland}
\email{\myemail{peter.feller@math.ch}}
\author{Lukas Lewark}
\address{University of Bern, Mathematical Institute, Alpeneggstrasse 22, 3012 Bern, Switzerland}
\email{\myemail{lukas@lewark.de}}
\keywords{Slice genus, Seifert form, Casson-Gordon invariants, algebraic unknotting number}
\subjclass[2010]{57M25,  57M27}
\Crefname{subsection}{Section}{Sections}
\Crefname{prop}{Proposition}{Propositions}
\Crefname{question}{Question}{Questions}
\crefname{equation}{}{}
\let\cref\Cref
\newcommand{\myemail}[1]{\href{mailto:#1}{#1}}
\newcommand{\qua}{\hskip 0.4em \ignorespaces}
\def\arxiv#1{\relax\ifhmode\unskip\qua\fi
\href{http://arxiv.org/abs/#1}%
{\tt arXiv:\penalty -100\unskip#1}}
\def\MR#1{\relax\ifhmode\unskip\qua\fi
\href{http://www.ams.org/mathscinet-getitem?mr=#1}{\tt MR#1}}
\def\xox#1{\csname xx#1\endcsname}
\newcolumntype{x}[1]{>{\centering\arraybackslash\hspace{0pt}}p{#1}}
\newcommand{\addresseshere}{%
  \enddoc@text\let\enddoc@text\relax
}
\newtheorem{thm}{Theorem}
\newtheorem{corollary}[thm]{Corollary}
\newtheorem{lemma}[thm]{Lemma}
\newtheorem{definition}[thm]{Definition}
\newtheorem{prop}[thm]{Proposition}
\newtheorem{claim}[thm]{Claim}
\newtheorem{question}
{Question}
\newtheorem{observation}[thm]{Observation}
\theoremstyle{remark}
\newtheorem{example}[thm]{Example}
\newtheorem{rmk}[thm]{Remark}
\newenvironment{Example}{\begin{example}\rm}{\end{example}}
\def\Q{\mathbb{Q}}
\def\Z{\mathbb{Z}}
\def\C{\mathbb{C}}
\def\ID{\mathbbm{1}}
\def\epsilon{\varepsilon}
\def\gtop{{g_{\rm{top}}}}
\def\gs{{g_{\rm{smooth}}}}
\def\ga{{g_{\rm{alg}}}}
\def\ua{{u_{\rm{alg}}}}
\def\gst{{g_{\Z}}}
\newcommand{\Alex}[1]{\ensuremath{\Delta_{#1}}}
\DeclareMathOperator{\rk}{rk}
\DeclareMathOperator{\rad}{rad}
\DeclareMathOperator{\coker}{coker}
\begin{document}
\maketitle
\begin{abstract}
We introduce a new link invariant called the algebraic genus, which gives an upper bound for the topological slice genus of links.
In fact, the algebraic genus is an upper bound for another version of the slice genus proposed here:
the minimal genus of a surface in the four-ball whose complement has infinite cyclic fundamental group.
We characterize the algebraic genus in terms of cobordisms in three-space, and explore the connections to other knot invariants
related to the Seifert form, the Blanchfield form, knot genera and unknotting.
Employing Casson-Gordon invariants, we discuss the algebraic genus as a candidate for the optimal upper bound for the topological slice genus
that is determined by the S-equivalence class of Seifert matrices. 

\end{abstract}
\section{Introduction}\label{sec:intro}
In this paper we introduce the notion of the
\emph{algebraic genus} of a link $L$ in $S^3$, denoted by $\ga(L)$. %
The main interest in $\ga$ is that it provides an upper bound for the \emph{$\Z$--slice genus} $\gst(L)$ of a link $L$---the smallest genus of an oriented connected properly embedded locally flat surface $F$ in the 4--ball $B^4$ with oriented boundary $L\subset \partial B^4 = S^3$ and $\pi_1(B^4\setminus F)\cong \Z$.
\begin{thm}\label{thm:gts<=ga}
For all links $L$, $\gst(L)\leq \ga(L)$.
\end{thm}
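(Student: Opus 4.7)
The plan is to realize the algebraic genus geometrically via a Seifert surface decomposition and then apply Freedman's topological disk theorem for Alexander-polynomial-one knots to cap off the ``algebraic'' part inside $B^4$.

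First I would start with a Seifert surface $\Sigma \subset S^3$ for $L$ of some genus $h$ that realizes $g := \ga(L)$ in the following geometric sense: $\Sigma$ contains a subsurface $\Sigma' \subseteq \Sigma$ of genus $h - g$ bounded in $\Sigma$ by a single simple closed curve $c$, chosen such that $c$, viewed as a knot with Seifert surface $\Sigma'$, has trivial Alexander polynomial. The existence of this geometric realization should follow from the classical fact that every symplectic automorphism of $H_1(\Sigma;\Z)$ is induced by a self-diffeomorphism of $\Sigma$; hence any rank-$2(h-g)$ primitive symplectic summand of $H_1(\Sigma)$ on which the Seifert form has trivial Alexander polynomial can be arranged to be $H_1(\Sigma')$ for an embedded subsurface $\Sigma'$ with one boundary component.

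Next I would push $\Sigma$ slightly into the interior of $B^4$ along a collar of $\partial B^4 = S^3$. By Freedman's theorem the Alexander-polynomial-one knot $c$ bounds a locally flat disk $D \subset B^4$ with $\pi_1(B^4 \setminus D) \cong \Z$. After an ambient isotopy making the boundary of $D$ agree with that of the pushed-in $\Sigma'$, I would excise the pushed-in copy of $\Sigma'$ and glue in $D$. This yields a properly embedded locally flat surface $F \subset B^4$ of genus $g$ with $\partial F = L$.

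The crucial and most technical step is to verify that $\pi_1(B^4 \setminus F) \cong \Z$. A van Kampen decomposition of $B^4 \setminus F$ splits it into an outer collar region containing the pushed-in copy of $\Sigma \setminus \Sigma'$ (whose fundamental group is normally generated by meridians of $L$) and an inner region in which $\Sigma'$ has been replaced by $D$ (whose complement has $\pi_1 \cong \Z$ by choice of $D$). All meridians of $F$ should become conjugate to a single meridian inside the inner region, and combining this with $H_1(B^4 \setminus F) \cong \Z$ from Alexander duality would yield that $\pi_1(B^4 \setminus F)$ is infinite cyclic. The main obstacle lies precisely in this final $\pi_1$ computation---in particular in tracking meridional generators through the gluing and confirming that nothing extra slips in.
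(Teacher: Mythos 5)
Your construction of the surface coincides with the paper's: \cref{prop:sepcurve} supplies exactly the separating Alexander-polynomial-one curve $c$ and subsurface $\Sigma'$ you describe (via the surjection of the mapping class group onto the symplectic group, with the extra care for links that boundary classes go to boundary classes), and the paper likewise caps the pushed-in copy of $\Sigma'$ with a Freedman disk $D$ to obtain a locally flat surface $F$ of genus $\ga(L)$. The genus count and local flatness are fine.

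The gap is in the final step, and it is a genuine one: the deduction ``all meridians of $F$ are conjugate to a single meridian lying in the inner region, and $H_1(B^4\setminus F)\cong\Z$, therefore $\pi_1(B^4\setminus F)\cong\Z$'' is not valid. Normal generation of $\pi_1$ by a meridian together with $H_1\cong\Z$ holds for the complement of \emph{every} connected locally flat surface in $B^4$ (the former because $B^4$ is simply connected, the latter by Alexander duality), including surfaces whose complements have non-abelian fundamental group, e.g.\ slice disks for knots that are not $\Z$-slice. What is actually needed is that the inclusion of the inner region $B^4\setminus D$ into $B^4\setminus F$ induces a \emph{surjection} on $\pi_1$; then $\pi_1(B^4\setminus F)$ is a quotient of $\Z$, and either $H_1\cong\Z$ or the null-homotopy observation below finishes the argument. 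The paper obtains this surjection by exhibiting the complement of a tubular neighborhood of the capped-off surface as $B^4\setminus N(D)$ with one $4$--dimensional $2$--handle attached for each band of $\overline{\Sigma\setminus\Sigma'}$, attached along a curve $\alpha_i$ encircling that band; attaching $2$--handles can only kill elements of $\pi_1$, which gives surjectivity, and each $\alpha_i$ is homologous in $S^3\setminus c$ to the difference of two meridians of $c$, hence null-homologous, hence null-homotopic in the abelian group $\pi_1(B^4\setminus D)\cong\Z$, so in fact nothing is killed. Your van Kampen decomposition could presumably be reorganized into this picture, but the handle-attachment argument is precisely the missing idea that turns ``tracking meridional generators through the gluing'' into a proof, and as stated your concluding inference does not go through.
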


Postponing a more conceptual definition to \cref{sec:def}, we let the \emph{algebraic genus} $\ga(L)$ of a link $L$ with $r>0$ components be defined by
\[\min
\left\{\frac{m-r+1}{2}-n\ \middle| \begin{array}{l}\text{$L$ admits an $m\times m$ Seifert matrix of the form}\\
\begin{pmatrix}
A & * \\
* & *
\end{pmatrix},  \text{ where $A$ is a top-left } 2n\times 2n\\ \text{submatrix with } \det(tA-A^{\top})=t^n\end{array}\right\}.\]

We will establish (see \Cref{thm:character}) that the algebraic genus can be characterized as the \emph{3--dimensional cobordism distance} to the set of knots with Alexander polynomial~$1$,
and that it is related to a number of known knot invariants such as the algebraic unknotting number (see \Cref{thm:galequaleq2ga}).
However, in our opinion, what makes the algebraic genus most worth considering are the following two questions.
We conjecture that both of them have a positive answer.

\begin{question}\label{q:galg=gst}
Does $\ga(L)=\gst(L)$ hold for all links $L$, i.e.~is the inequality in \cref{thm:gts<=ga} an equality?
\end{question}
\begin{question}\label{q:galg=bestclassicalupperbound} Is the algebraic genus the best upper bound for the topological slice genus of a link $L$ determined by the S-equivalence class of the Seifert matrices of $L$? More precisely, is it true for all links $L$ that
$
\ga(L)=\max\{\gtop(L')\mid\text{The Seifert matrices of $L'$ are S-equivalent to those of $L$}\}
$?
\end{question}

\subsection{The disk embedding theorem and other context for the above questions}%

Freedman's celebrated disk embedding theorem~\cite{Freedman_82_TheTopOfFour-dimensionalManifolds,FreedmanQuinn_90_TopOf4Manifolds}
implies that a locally-flat $2$--sphere $S$ in $S^4$ is unknotted (i.e.~bounds an embedded locally flat 3--ball) if and only if its complement satisfies $\pi_1(S^4\setminus S)\cong\Z$~\cite[Theorem~11.7A]{FreedmanQuinn_90_TopOf4Manifolds}. This makes the study of surfaces with that fundamental group condition rather natural.

In the relative case of disks bounding knots, Freedman established the following~\cite{Freedman_82_TheTopOfFour-dimensionalManifolds}\cite[Theorem~11.7B]{FreedmanQuinn_90_TopOf4Manifolds},
which in fact is the only consequence of the disk embedding theorem that we will use in this text.
\begin{equation}\label{eq:Alex1<=>boundsstddisk}
\parbox{0.9\textwidth}{\centering A knot $K$ has Alexander polynomial 1 if and only if it bounds a properly embedded locally flat disk in the 4--ball whose complement has infinite cyclic fundamental group.}
\end{equation}
In terms of the invariants we introduce in this text, \Cref{eq:Alex1<=>boundsstddisk} may be written as
\[
\gst(K)=0 \Leftrightarrow \ga(K)=0.
\]
This gives a positive answer to the simplest case of \Cref{q:galg=gst}.

A positive answer to \Cref{q:galg=gst} in general would show that $\gst$ is a \emph{classical} link invariant in the sense of \cite{BorodzikFriedl_15_TheUnknottingnumberAndClassInv1}: a link invariant is classical if it only depends on the S-equivalence class of Seifert matrices of $L$. Such a simple---in particular 3--dimensional---characterization of $\gst$ would a priori be surprising. For example,
we note that such a characterization is impossible for the more extensively studied \emph{topological slice genus} $\gtop(L)$ of a link~$L$---the smallest genus of an oriented connected properly embedded locally flat surface $F$ in the 4--ball $B^4$ with oriented boundary $L\subset \partial B^4 = S^3$.
This is because there are pairs $K, K'$ of knots with the same Seifert form %
such that $K$ is topologically slice while $K'$ is not, i.e.~$\gtop(K) = 0$ and $\gtop(K') > 0$.
Such examples of knots $K$ and $K'$ were first found by Casson and Gordon using what are now known as \emph{Casson-Gordon invariants} \cite{cg,CassonGordon_86}.

In \cref{subsec:optimality}, we will see how
Gilmer's lower bounds derived from Casson-Gordon invariants~\cite{gilmer} can be used to obtain a partial answer to \Cref{q:galg=bestclassicalupperbound}.
Positive answers to both \Cref{q:galg=gst,q:galg=bestclassicalupperbound} would yield a rather satisfying understanding of the possible slice genera of links with a given S-equivalence class: the maximal upper bound in terms of Seifert forms is attained and it is equal to a version of the slice genus with a natural condition on $\pi_1$. This fits with the following important point about invariants that depend on more than just the S-equivalence class such as the Casson-Gordon invariants and $L^2$-signatures (as used by Cochran, Orr, and Teichner~\cite{COT}). %
Namely, these obstructions involve subtle questions concerning the extension of representations of $\pi_1$ of knot complements to $\pi_1$ of the complements of surfaces in $B^4$ bounding the knot; an issue that completely disappears when the latter complement has cyclic $\pi_1$.

\subsection{The algebraic genus via 3--dimensional cobordism distance}
Rather than in terms of Seifert matrices, $\ga(L)$ can also be characterized as the smallest genus of a cobordism in 3--space between $L$ and a knot with Alexander polynomial 1:
\begin{thm}\label{thm:character}
For all links $L$ with $r$ components, $\ga(L)$ equals the smallest genus among Seifert surfaces for links $L'$ with $r+1$ components such that the first $r$ components form $L$ and the last component forms a knot with Alexander polynomial 1.
\end{thm}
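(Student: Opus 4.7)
We prove the two inequalities separately, translating between Seifert matrices and the geometric Seifert surfaces realizing them.

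\emph{Upper bound, $\ga(L) \le g$.} Let $F'$ be a Seifert surface of genus $g$ for some $(r+1)$-component link $L' = L \cup K$ in which $K$ has Alexander polynomial one. Pick a Seifert surface $F_K$ for $K$ of some genus $n$ whose Seifert matrix $A$ satisfies $\det(tA - A^\top) = t^n$, and glue $F_K$ to $F'$ along $K$ (reversing the orientation of one side if needed so that the result is orientable). The outcome is a Seifert surface $F$ for $L$ of genus $g_F = g + n$. Choose a basis of $H_1(F)$ that begins with a basis of the subspace $H_1(F_K) \subset H_1(F)$ realizing $A$. Since push-offs along $F$ agree, up to sign, with push-offs along $F_K$ in a small neighborhood of $F_K$, the top-left $2n \times 2n$ block of the resulting Seifert matrix of $F$ is $A$ or $A^\top$; either way the determinant condition is satisfied. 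Hence $\ga(L) \le g_F - n = g$.

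\emph{Lower bound.} Suppose $V$ is an $m \times m$ Seifert matrix of $L$ of the form in the definition, arising from a Seifert surface $F$ of $L$ of genus $g_F = (m - r + 1)/2$ and a basis $e_1, \ldots, e_m$ of $H_1(F)$. The plan is to find a subsurface $F_0 \subset F$ of genus $n$ with a single boundary component $K$ such that $H_1(F_0) \to H_1(F)$ is an isomorphism onto the span $E$ of $e_1, \ldots, e_{2n}$. Given such an $F_0$, the complement $F_1 = F \setminus \mathrm{int}(F_0)$ is a Seifert surface for the $(r+1)$-component link $L \cup K$ of genus $g_F - n = (m - r + 1)/2 - n$, while the Seifert form restricted to $H_1(F_0)$ is $A$, so $K$ has Alexander polynomial $\det(tA - A^\top) = t^n$, which is trivial up to units.

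To produce $F_0$, set $t = 1$ in $\det(tA - A^\top) = t^n$ to get $\det(A - A^\top) = 1$, so the intersection form $A - A^\top$ on $E$ is unimodular. Thus $E$ admits a symplectic basis $\alpha_1, \beta_1, \ldots, \alpha_n, \beta_n$; each element, being primitive in $H_1(F)$, is represented by a simple closed curve on $F$. An iterated surgery and cut-and-paste argument on $F$ arranges these curves in standard symplectic position, with each pair $\alpha_i, \beta_i$ meeting transversely in a single point and all other pairs disjoint. A regular neighborhood of their union is a subsurface of $F$ of genus $n$ with one boundary component, furnishing the desired $F_0$.

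The main technical hurdle is this geometric realization of a symplectic sub-basis by simple closed curves in standard position inside a surface with boundary. It relies on the classical fact that every primitive homology class on an oriented surface is represented by a simple closed curve, combined with surgery to eliminate spurious intersections between curves representing different symplectic pairs. If a direct realization on the given $F$ should fail, one can first stabilize $F$ by attaching a tube in $S^3$: this replaces $V$ by the block-sum of $V$ with a hyperbolic matrix, and after a basis permutation one may take the new top-left $(2n+2) \times (2n+2)$ block to be $A$ block-summed with a hyperbolic (still satisfying the determinant condition, now $t^{n+1}$). Both $g_F$ and $n$ increase by one, so the value $g_F - n$ is unchanged, and after enough stabilizations the geometric realization can always be carried out.
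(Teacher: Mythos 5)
Your lower-bound direction follows essentially the same route as the paper (this is \cref{prop:sepcurve} combined with \cref{prop:3Dcharofga}(i)): realize the Alexander-trivial subgroup by a subsurface of genus $n$ bounded by a separating curve $K$ with $\Alex{K}=1$. The paper realizes the subsurface by invoking the surjectivity of the mapping class group onto the symplectic group (preserving boundary classes), which is cleaner than your ``iterated surgery and cut-and-paste'' sketch, but the idea is the same and your fallback via stabilization is not needed.

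The upper-bound direction, however, has a genuine gap. You take a Seifert surface $F_K$ for $K$ and ``glue $F_K$ to $F'$ along $K$,'' but both surfaces live in $S^3$ and will in general intersect in their interiors, so the union is not an embedded surface and hence not a Seifert surface for $L$. Worse, there is an actual obstruction to making them disjoint away from $K$ without modifying $F'$: the cobordism $F'$ induces some framing on $K$, and if that framing is nonzero then the parallel copy of $K$ on $F'$ has nonzero linking number with $K$ and therefore must intersect \emph{every} Seifert surface of $K$. This is exactly what the paper's \cref{lemma:separatingCobfromSeifertsurface} addresses: one first inserts full twists along arcs of $F'$ running from $K$ to $L$ to correct the framing (changing the isotopy type of $L\cup K$ but not of $L$, $K$, or the genus), then modifies $F'$ near its $1$--handles to kill the algebraic intersection numbers of the handle cores with $F_K$, and finally stabilizes $F_K$ by tubing along subarcs of the cores to remove the remaining geometric intersections. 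The stabilizations of $F_K$ raise $n$, but since the quantity you estimate is $(g+n)-n=g$, the bound survives; the point is that none of this is automatic, and without it the surface $F$ you base your Seifert-matrix computation on does not exist. You would need to supply an argument of this kind to close the gap.
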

This characterization of $\ga(L)$ is the reason for naming the invariant ``algebraic genus'', in parallel to the algebraic unknotting number $\ua$ (see \cref{subsec:ua}): for a knot $K$, both $\ga(K)$ and $\ua(K)$ can be defined either purely in terms of the Seifert form, or as a $3$--dimensional distance (using the genus of Seifert surfaces and unknotting, respectively) to knots that have Alexander polynomial 1.
The name ``algebraic slice genus'', on the other hand, would be more fitting for Taylor's invariant (see \cref{subsec:taylor}). %

\subsection{The algebraic genus and other knot invariants}
We summarize the relation between $\ga$ and other knot invariants in \cref{fig:invdiag}.
By $g$ and $\gs$ we respectively denote the three-dimensional genus and the smooth slice genus, neither of which is classical (i.e.~determined by the S-equivalence class).
\begin{figure}[h]
\centering
\includegraphics{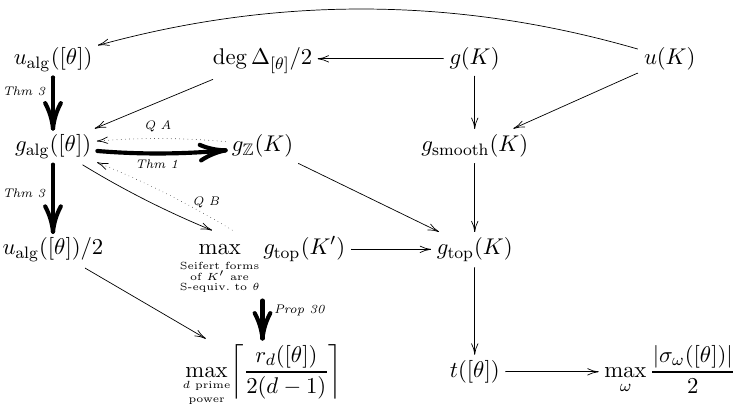}
\caption{Diagrammatic summary of relations of $\ga$ to other knot invariants, in homage to \cite[Figure 2]{BorodzikFriedl_15_TheUnknottingnumberAndClassInv1}.
Arrows $a \rightarrow b$ indicate that the inequalities $a \geq b$ hold for all knots.
A dotted arrow means that the status of the respective inequality is open.
If there is no directed path from $a$ to $b$, it means that $a \geq b$ is known to be false for some knots.
Thick arrows indicate original results of this article.
We write $[\theta]$ (the S-equivalence class of a Seifert form $\theta$ of $K$) rather than $K$ as an argument %
to indicate that an invariant is classical, i.e.~depends only on $[\theta]$.}\label{fig:invdiag}
\end{figure}
Some of the considered invariants, notably the algebraic unknotting number and Taylor's invariant,
are currently only defined for knots. One might expect those invariants and their relations to $\ga$
to generalize to multi-component links, for which $\ga$ is naturally defined.
However, such generalizations need not be straight-forward.
This is the reason we consider knots rather than multi-component links in \Cref{subsec:ua,subsec:taylor,subsec:optimality}.

\subsection{The algebraic genus and the algebraic unknotting number}\label{subsec:ua}
The algebraic unknotting number $\ua$ of a knot, introduced by Murakami~\cite{Murakami}, is the optimal classical lower bound
for the unknotting number $u$. We prove the following inequality between $\ua$ and $\ga$:
\begin{restatable}{thm}{uathm}\label{thm:galequaleq2ga}
For all knots $K$,
$
\ga(K)\leq\ua(K)\leq 2\ga(K).
$
\end{restatable}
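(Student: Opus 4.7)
The plan is to prove the two inequalities $\ga(K)\leq\ua(K)$ and $\ua(K)\leq 2\ga(K)$ separately. Both proofs exploit the defining Seifert-matrix formula for $\ga$ together with the standard structure of an algebraic unknotting sequence: a sequence of Seifert matrices $V_0,V_1,\dots,V_n$, with $V_0$ a Seifert matrix of $K$ and $V_n$ of Alexander polynomial $1$, consecutive terms related by $V_{i+1}=V_i\pm e_{j_i}e_{j_i}^\top$ after an integer unimodular congruence.

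For $\ga(K)\leq\ua(K)$, the plan is to show that a single algebraic unknotting move increases $\ga$ by at most one, so that iterating this along an optimal unknotting sequence of length $\ua(K)$ gives $\ga(K)\leq\ua(K)$. Concretely, given Seifert matrices $V$ and $V'=V\pm e_je_j^\top$ (after a congruence) and a block decomposition $V'=\begin{pmatrix}A&*\\ *&*\end{pmatrix}$ with $A$ a $2n\times 2n$ Alex-$1$ block witnessing $\ga(V')\leq g$, I would construct, via a single stabilization and a suitable congruence, a Seifert matrix S-equivalent to $V$ of analogous block form with algebraic genus at most $g+1$. Equivalently, via \cref{thm:character}, each algebraic unknotting move can be realized as a genus-$1$ cobordism piece, and composing $\ua(K)$ such pieces yields a genus-$\ua(K)$ cobordism from $K$ to an Alexander-polynomial-$1$ knot.

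For $\ua(K)\leq 2\ga(K)$, take a Seifert matrix $V=\begin{pmatrix}A&B\\ C&D\end{pmatrix}$ of $K$ of size $2n+2g$ witnessing $\ga(K)=g$, with $A$ a $2n\times 2n$ Alex-$1$ block. I would proceed by induction on $g$: show that any Seifert matrix of a knot containing a $2k\times 2k$ Alex-$1$ top-left block can be transformed, using two algebraic unknotting moves and a unimodular congruence, into a Seifert matrix containing a $(2k+2)\times(2k+2)$ Alex-$1$ top-left block. Iterating $g$ times extends the Alex-$1$ block to fill the whole matrix, producing an Alex-$1$ Seifert matrix for $K$ after $2g$ moves, hence $\ua(K)\leq 2g$.

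The main obstacle lies in the inductive step of the second inequality: the matrix-theoretic claim that two diagonal-entry modifications, in an appropriate basis, can always extend an Alex-$1$ sub-block by one additional pair of rows and columns. This reduces to analyzing how $\det(tV-V^\top)$ behaves under rank-one diagonal perturbations conditional on an existing Alex-$1$ sub-block; the subtlety is in choosing the unimodular congruence before the moves compatibly with the existing Alex-$1$ structure on the $2k\times 2k$ block.
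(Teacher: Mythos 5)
Your argument for the first inequality $\ga(K)\leq\ua(K)$ is essentially the paper's: one shows that a single crossing change (equivalently, a single algebraic unknotting move $V\mapsto PVP^{\top}\pm e_{11}$) changes $\ga$ by at most $1$, via a stabilization of the Seifert matrix and a change of basis that transfers the Alexander-trivial subgroup across the move; iterating along an optimal sequence and using $\ga=0$ for Alexander polynomial $1$ knots gives the claim. (Your alternative phrasing via composing genus-$1$ cobordism pieces and \cref{thm:character} needs care, since 3D-cobordisms do not obviously concatenate inside $S^3$, but the matrix version is sound and is what the paper does.)

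The second inequality is where your proposal has a genuine gap, and you have in fact put your finger on it yourself: the inductive step --- that two algebraic unknotting moves always extend a $2k\times 2k$ Alexander-trivial block to a $(2k+2)\times(2k+2)$ one --- is not proved, and there is no reason to expect it to follow from elementary manipulation of $\det(tV-V^{\top})$ under rank-one diagonal perturbations. Already for $g(K)=1$ and $k=0$ the claim amounts to a nontrivial statement about representing certain values by the quadratic form $v\mapsto v^{\top}Vv$ so that after two $\pm1$ modifications of diagonal entries (each in a freely chosen basis) the resulting $2\times 2$ matrix has trivial Alexander polynomial; this is a genuine arithmetic problem, not a formal determinant computation. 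The paper circumvents exactly this difficulty by invoking the Borodzik--Friedl theorem $\ua(K)=n(K)$, where $n(K)$ is the minimal size of a Hermitian matrix $A$ over $\Z[t^{\pm1}]$ presenting the Blanchfield pairing with $A(1)$ congruent to a diagonal integral matrix. This replaces ``find $2\ga(K)$ explicit unknotting moves'' by the far more flexible task of exhibiting one small presentation matrix: starting from a Seifert matrix in the normal form of \cref{lem:stdformofalextriv}, a congruence over $\Z[t^{\pm1}]$ splits off the determinant-$1$ block coming from the Alexander-trivial subgroup, leaving a $2\ga(K)\times 2\ga(K)$ Hermitian matrix $W_4$; the delicate point is then that $W_4(1)$ is diagonalizable, which is arranged by first making the symmetric block $B$ odd (\cref{lem:1}) and applying the classification of indefinite odd unimodular forms (\cref{thm:1}). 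Converting such a presentation back into $2\ga(K)$ actual unknotting moves is precisely the hard direction of Borodzik--Friedl's theorem, so your plan, if carried out directly at the level of moves, would be reproving that theorem in a special case; as written, the key step is missing.
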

Let $\Alex{K}$ denote the Alexander polynomial of $K$. We understand its \emph{degree} $\deg(\Alex{K}(t))$ to be the breadth of $\Alex{K}$; e.g.~$2$ for the trefoil.
Then, using that $2\ga(K)\leq \deg(\Alex{K}(t))$, the above theorem yields
\begin{corollary}\label{cor:ualeqdegAlex}
For all knots,  $\ua(K)\leq \deg(\Alex{K}(t))$.
\end{corollary}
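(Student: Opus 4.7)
The proof is a direct combination of two inequalities that are both available at this point of the paper.

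The plan is as follows. First, I would invoke \cref{thm:galequaleq2ga}, which yields the right-hand inequality $\ua(K) \leq 2\ga(K)$. Second, I would use the bound $2\ga(K) \leq \deg(\Alex{K}(t))$ asserted in the paragraph above the corollary. Chaining the two inequalities transitively yields $\ua(K) \leq \deg(\Alex{K}(t))$, which is the desired conclusion.

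The only step that would require justification beyond citing \cref{thm:galequaleq2ga} is the inequality $2\ga(K) \leq \deg(\Alex{K}(t))$. I would verify this directly from the definition of $\ga$: setting $d = \deg(\Alex{K}(t))$ and starting from a Seifert matrix of $K$ of size $m \times m$, one uses the fact that, up to S-equivalence, one can split off a block whose Alexander polynomial is $1$. Concretely, one can produce a Seifert matrix in the block form required by the definition of $\ga$ with the top-left $2n \times 2n$ block $A$ satisfying $\det(tA - A^{\top}) = t^n$ and $n = (m-d)/2$. Evaluating the expression $(m - r + 1)/2 - n$ appearing in the definition with $r = 1$ then gives $\ga(K) \leq m/2 - n = d/2$, as wanted.

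There is no substantive obstacle here: the corollary is genuinely a one-line assembly of previously established results, and the only care required is to confirm that the ``Alexander-trivial'' portion of a Seifert matrix can indeed be extracted as a top-left block of the shape demanded by the definition of $\ga$. The main content of the corollary therefore lies in \cref{thm:galequaleq2ga}, whose proof is the nontrivial ingredient.
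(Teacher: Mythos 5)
Your argument is correct: $\ua(K)\leq 2\ga(K)$ from \cref{thm:galequaleq2ga} together with $2\ga(K)\leq\deg\Alex{K}$ gives the corollary, and the paper itself notes that the corollary ``also follows from \cref{thm:galequaleq2ga}'' in exactly this way. Your verification of $2\ga(K)\leq\deg\Alex{K}$ is also sound, though it can be streamlined: by Trotter's theorem the S-equivalence class of $K$ contains a non-singular Seifert matrix of size $\deg\Alex{K}$, and by \cref{prop:gaS-equivalenceclass} one may compute $\ga$ on any S-equivalent form, so taking the trivial (rank-zero) Alexander-trivial subgroup already gives $\ga(K)\leq\deg\Alex{K}/2$; your version with $n=(m-d)/2$ amounts to stabilizing this representative back up to a genuine Seifert matrix, which is justified by \cref{lemma:stabdecreasesga}. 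However, the proof the paper actually writes out under the heading ``Proof of \cref{cor:ualeqdegAlex}'' is a different, self-contained argument that does not use $\ua\leq 2\ga$ at all: it invokes the Borodzik--Friedl identity $\ua(K)=n(K)$, takes Trotter's non-singular Seifert matrix $V$ of size $\deg\Alex{K}$ in the normal form \cref{eq:1}, uses \cref{lem:1} to arrange that the block $B$ is odd, and then applies the classification of indefinite odd unimodular forms (\cref{thm:1}) to diagonalize $\widetilde{V}(1)$, which bounds $n(K)$ by the size of $\widetilde V$. The trade-off is that your route is a one-line assembly but rests on the full strength of the second inequality of \cref{thm:galequaleq2ga}, whose proof is the most technical part of \cref{sec:ua}, whereas the paper's direct proof needs only the easier lemmas and serves as a warm-up for that harder argument.
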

This answers a question of Borodzik and Friedl, who had previously shown $$\ua(K)\leq \deg(\Alex{K}(t))+1$$
and asked whether that bound could be sharpened.
We use their characterization of $\ua$ in terms of the Blanchfield pairing (cf. \cite[Theorem~2]{BorodzikFriedl_14_OnTheAlgUnknottingNr}, \cite[Lemma~2.3]{BorodzikFriedl_15_TheUnknottingnumberAndClassInv1})
for the proof of the second inequality of \cref{thm:galequaleq2ga}.

\subsection{The algebraic genus and Taylor's invariant}\label{subsec:taylor}
The algebraic genus can be understood as a measure of how much a knot fails to have Alexander polynomial $1$. Indeed, $\ga(K)=0$ if and only if $\Alex{K} = 1$.
Taylor introduced a knot invariant $t(K)$ that generalizes algebraic sliceness~\cite{taylor}: a knot $K$ is algebraically slice, i.e.~has a metabolic Seifert form, if and only if $t(K)=0$.
Explicitly, if $\theta: \Z^{2n}\times\Z^{2n}\to\Z$ is a Seifert form of a knot $K$, then $t(K)$ is defined as $n$ minus the maximal rank of a totally isotropic subgroup of $\Z^{2n}$.
Taylor's invariant provides a lower bound for the topological slice genus\footnote{Taylor's original proof~\cite{taylor} is in the PL category, but the statement is known by experts to hold in the locally-flat category. Not being aware of a detailed reference, let us briefly outline a proof for $t(K)\leq\gtop(K)$.

The crucial ingredient is that every locally-flat closed oriented surface in $S^4$ is the boundary of a compact oriented locally-flat 3-dimensional submanifold of $S^4$. This can be deduced from topological transversality as established in~\cite[Chapter 9]{FreedmanQuinn_90_TopOf4Manifolds} (see also~\cite[page~XXI]{Ranicki}).

Now, for a fixed knot $K$, let $S$ be a locally-flat slice surface in $B^4$ realizing $\gtop(K)$
and pick a Seifert surface $F$ for $K$ in $S^3=\partial B^4$. Viewing $B^4$ as a subset of $S^4$, we may push the interior of $F$ away from $B^4$ such that $F\cup S$ forms a closed locally flat surface in $S^4$ with $K=F\cap S=S^3\pitchfork (F\cup S)$.
By the crucial fact, we find a locally flat 3-manifold $M\subset S^4$ with boundary $F\cup S$.
Making $M$ transverse to $S^3$ (while keeping $\partial M$ transverse to $S^3$)
and intersecting with $B^4$ yields a locally-flat 3-manifold in $B^4$ whose boundary is the union of a (possibly non-connected) Seifert surface of $K$ in $S^3$ and an isotopic copy of~$S$.
From there, one may follow a standard proof, as e.g.\ given in \cite[Chapter~8]{Lickorish_97} for the case $\gtop(K)=0$.}, which is indeed the optimal classical bound.
In particular, Taylor's bound subsumes the bounds given by the Levine-Tristram signatures~$\sigma_{\omega}$.

Since $t(K)\leq \gtop(K)\leq \ga(K)$, it would be of interest to relate $t(K)$ and $\ga(K)$. %
It appears that aside from $t(K) \leq \ga(K)$
the two invariants are rather independent. However, we can prove the following:
if a genus 2 fibered knot $K$ is algebraically slice (i.e.~$t(K)=0$), then $\ga(K)$ (and thus also the topological slice genus of $K$) is at most 1; compare \Cref{prop:fibred}.
In contrast, such results are not available for knots with Alexander polynomials of higher degree.
For example, there exist algebraically slice knots $K$ with monic Alexander polynomial of degree $6$ and $\ga(K) = 3$; compare \cref{ex:r2}.

\subsection{Towards optimality of the algebraic genus as slice genus bound}\label{subsec:optimality}
Taylor's lower bound to the slice genus is known to be the best classical lower bound for knots (see \cref{subsec:taylor});
that is to say, every Seifert form $\theta$ is realized by a knot whose slice genus equals $t(\theta)$.
\Cref{q:galg=bestclassicalupperbound} is the analogous question about classical \emph{upper} bound for the topological slice genus.
As a first step towards determining the best classical upper bound for the topological slice genus of
knots---for which the algebraic genus is a candidate---we prove in \Cref{thm:cg}
that every Seifert form $\theta$ of a knot is realized by a knot $K$ with
\[
\phantom{\max_{K\in\mathcal{S}_{\theta}}}
\gtop(K) \geq \max_{\substack{d\text{ {prime}}\\\text{{power}}}} \biggl\lceil \frac{r_d(\theta)}{2(d - 1)} \biggr\rceil.
\]
Here, $r_d(\theta)$ denotes the minimum number of generators of the first integral homology group of the $d$--fold branched cover
of a knot $K'$ realizing $\theta$ (note that $r_d$ only depends on $\theta$).
The relevant knots in the proof are constructed via infection, following Livingston~\cite{livingstonseifert}.

\subsection{Calculations of the algebraic genus and its role as upper bound for the slice genus}
It is a virtue of $\ga$ that upper bounds for it can be explicitly calculated using Seifert matrix manipulation.
Previous work by Baader, Liechti, McCoy and the
authors~\cite{Feller_15_DegAlexUpperBoundTopSliceGenus,BaaderLewark_15_Stab4GenusOFAltKnots,FellerMcCoy_15,BaaderFellerLewarkLiechti_15,mccoylewark,LiechtiBraids}
used this method (without any focus on the algebraic genus itself) to determine upper bounds for the topological slice genus of various classes of links.
Due to \Cref{thm:gts<=ga}, all of those results in fact give upper bounds for the $\mathbb{Z}$--slice genus.

Although no general algorithm is known, in many cases a combination of calculable upper and lower bounds for the algebraic genus determines it completely.
For example, the algebraic genus has been calculated for all prime knots with 11 crossings or less \cite{mccoylewark}.

\subsection{Structure of the paper}
In \Cref{sec:def}, the algebraic genus is defined, first examples are given and basic results are proven, as well as a result about the stable algebraic genus.
\Cref{sec:3d} contains the proof for the alternative three-dimensional characterization of $\ga$ given in \cref{thm:character}.
\Cref{thm:gts<=ga} and \Cref{thm:galequaleq2ga} on the $\Z$--slice genus and the algebraic unknotting number are proven in \Cref{sec:st,sec:ua}, respectively.
\Cref{sec:fibred} is concerned with the algebraic genus of knots with monic Alexander polynomial, and contains the proof of \Cref{prop:fibred}.
In \Cref{sec:optimality}, optimality of slice genus bounds is discussed and \Cref{thm:cg} is proven.
The paper concludes with the short \Cref{sec:reformulate}, in which previously known results are reformulated in terms of the algebraic genus.
\subsection{Acknowledgments}
We thank Danny Ruberman for pointing us to \cite{taylor}.
We thank Sebastian Baader and Livio Liechti for valuable inputs; in particular, concerning \cref{prop:fibred}.
We thank Mark Powell for comments on a first version of this paper, and the referee for
helpful suggestions.
Both authors gratefully acknowledge support by the SNSF and thank the MPIM Bonn for its hospitality.

\section{The algebraic genus---basic definitions and properties}\label{sec:def}

\subsection{Definitions}
We consider links, by which we mean ~smooth oriented non-empty closed 1-dimensional submanifolds of $S^3$.
We define the algebraic genus of a %
link $L$ using the Seifert form defined on $H_1(F;\Z)\cong \Z^{2g+r-1}$, where $F$ is a genus $g\geq0$ Seifert surface with boundary the $r>0$ component link $L$. By a \emph{Seifert surface} for a link $L$, we mean an oriented connected embedded surface in $S^3$ with boundary $L$.
The \emph{genus} $g(L)$ of $L$ is the minimum genus of a Seifert surface of $L$.

Let us start with some notations on bilinear forms (which we will readily use for Seifert forms).
For integers $g\geq 0$ and $r\geq 1$, let $\theta$ be a bilinear form on an abelian group $H\cong\Z^{2g+r-1}$ such that its
antisymmetrization, denoted by $\theta-\theta^\top$, satisfies the following:
the radical $\rad_{\theta-\theta^\top}$ of $\theta-\theta^\top$---the subgroup of elements that pair to 0 with all other elements---is isomorphic to $\Z^{r-1}$ as a group and the form induced by $\theta-\theta^\top$ on $H/\rad_{\theta-\theta^\top}$ has determinant 1.
These are precisely the bilinear forms that arise as Seifert forms of genus $g$ Seifert surfaces of links with $r$ components.
If $M$ is a matrix representing such a form $\theta$, we call
\[
t^{-g}\cdot \det (t\cdot M - M^{\top}) \in \Z[t^{\pm 1}]
\]
the \emph{Alexander polynomial} of $\theta$, and denote it by $\Alex{\theta}$.
This is independent of the choice of basis and hence indeed a well-defined; in fact, it is invariant under S-equivalence.
We call a subgroup $U\subseteq H\cong\Z^{2g+r-1}$ \emph{Alexander-trivial} if $\det(t\cdot M - M^{\top})$ is a unit in $\Z[t^{\pm 1}]$ for a matrix $M$ representing $\theta|_U$.
One obtains
\[
\det(M-M^{\top}) = 1
\]
by substituting $t = 1$.
It follows that $U$ is a summand of $H\cong \mathbb{Z}^{2g + r - 1}$, and the rank of $U$ is even and at most $2g$.
Furthermore, $U$ is Alexander-trivial if and only if $\theta|_U$ is the Seifert form of a knot $K$ with $\Alex{K}=\Alex{\theta|_U} = 1$.

Suppose $2d$ is the maximal rank of an Alexander-trivial subgroup for a bilinear form $\theta$.
We define $\widetilde{\ga}(\theta)$
to be $\widetilde{\ga}(\theta) = g - d$ and
we define $\ga(\theta)$ to be the minimum $\widetilde{\ga}(\eta)$, where $\eta$ ranges over all
forms that are S-equivalent to $\theta$.
\begin{definition}
\label{def:galg}For all %
links $L$, we define the \emph{algebraic genus $\ga(L)$ of} $L$ to be
\[
\ga(L)=\min\left\{\ga(\theta)\;\middle|\begin{array}{c}\;\text{$\theta$ is the Seifert form of}\\\text{a Seifert surface for $L$}\end{array}\right\}.
\]
\end{definition}
Clearly, if $L$ is a link and $\theta$ some fixed Seifert form of $L$, then
\begin{equation}\label{eq:gagaga}
\ga(\theta) \leq \ga(L)\leq \widetilde{\ga}(\theta).
\end{equation}
We will prove in \Cref{prop:gaS-equivalenceclass} that $\ga(\theta) = \ga(L)$. But whether the second inequality of \Cref{eq:gagaga} is an equality remains an open question.

Note that reversing the orientation of all components of $L$ or taking the mirror image of $L$ does not change $\ga(L)$ since the Alexander-trivial subgroups with respect to $\theta$, $\theta^{\top}$, and $-\theta$ are the same.
\subsection{More on Alexander-trivial subgroups}

In practice, establishing that a subgroup $U\subseteq H$ is Alexander-trivial may be done by finding a basis of $U$ with respect to which $\theta|_U$ is given by a matrix $M$ of the form
\begin{equation}\label{eq:MforAlexTriv}\left(\begin{matrix}
0 & \ID + P\\
L & Q
\end{matrix}\right),\end{equation}
where $0$, $\ID$, $P$, $L$ and $Q$ denote square matrices of half the dimension of $M$ that are zero, the identity, lower triangular with zeros on the diagonal, upper  triangular with zeros on the diagonal, and arbitrary, respectively. For this we note that, if a $2n\times 2n$ matrix is of the form \cref{eq:MforAlexTriv}, then $\det(t\cdot M - M^{\top})=t^{n}$. The following lemma implies that Alexander-triviality of a subgroup can always be established by finding such a basis.
\begin{lemma}\label{lem:stdformofalextriv}
If a Seifert form $\theta$ on $H\cong\Z^{2g}$ has Alexander polynomial 1, then there exists a basis of $\Z^{2g}$ with respect to which $\theta$ is given by a matrix of the form
$\left(\begin{matrix}
0 & \ID + P\\
P^{\top} & 0
\end{matrix}\right),$
where $0$, $\ID$, and $P$ denote $g\times g$ matrices that are zero, identity, and upper triangular with zeros on the diagonal, respectively.
\end{lemma}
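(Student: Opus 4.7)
I plan to proceed by induction on $g$; the base case $g = 0$ is vacuous. For the inductive step, given $\theta$ on $H \cong \Z^{2g}$ with $\det(tM - M^\top) = t^g$, I would first produce a primitive left-null vector $f_1$ for $\theta$. Setting $t = 0$ yields $\det M^\top = 0$, so $M^\top$ has a nontrivial kernel; take any primitive $f_1$ in it, so that $\theta(f_1, \cdot) \equiv 0$. The functional $\beta \colon H \to \Z$, $\beta(w) := \theta(w, f_1)$, is then surjective: since $M^\top f_1 = 0$, one has $(M - M^\top) f_1 = M f_1$, and unimodularity of $M - M^\top$ (obtained by substituting $t = 1$ in the identity $\det(tM - M^\top) = t^g$) forces $M f_1$, and hence $\beta$, to be primitive.

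Setting $U := \ker \beta$, a direct summand of rank $2g - 1$ containing $f_1$, the form $\theta|_U$ descends to a bilinear form $\bar \theta$ on $\bar H := U / \langle f_1 \rangle$, because $\theta$ vanishes whenever $f_1$ appears as an argument within $U$. Expanding $\det(tM - M^\top)$ along the row corresponding to $f_1$ (which has a single nonzero entry $-1$ in the $e_1$-column) gives $\det(tM - M^\top) = t \cdot \det(t \bar M - \bar M^\top)$, so $\Alex{\bar \theta} = 1$. The inductive hypothesis then yields a basis $(\bar e_j, \bar f_j)_{j=2}^{g}$ of $\bar H$ putting $\bar M$ in the stated form with some strictly upper triangular $\bar P$; I lift it to elements $(e_j, f_j)_{j=2}^{g} \subset U$, defined modulo $\langle f_1 \rangle$.

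To construct $e_1$, let $V := \langle e_2, \ldots, e_g, f_2, \ldots, f_g \rangle$; this is a symplectic subspace of rank $2g - 2$ on which the antisymmetrization $J := \theta - \theta^\top$ has its standard shape, as is immediate from the inductive form of $\bar M$. The symplectic complement $V^\perp$ has rank $2$, contains $f_1$ primitively, and carries a unimodular antisymmetric pairing, so there exists $u \in V^\perp$ with $J(u, f_1) = 1$, and hence $\theta(u, f_1) = 1$. Adjusting by a suitable integer multiple of $f_1$ yields $e_1 \in V^\perp$ with both $\theta(e_1, f_1) = 1$ and $\theta(e_1, e_1) = 0$.

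The final step is a simultaneous adjustment to enforce $\theta(e_1, e_j) = \theta(e_j, e_1) = 0$ for $j > 1$ (the two equalities are equivalent because $e_1 \in V^\perp$). I would replace $e_1$ by $e_1 + \sum_k a_k f_k + k_{e_1} f_1$ and each $e_j$ by $e_j + k_j f_1$: the vector $a = (a_k)$ is determined by the upper unitriangular system $(\ID + \bar P)\, a = -(\theta(e_1, e_j))_{j=2}^g$, which admits a unique integer solution; the $k_j$'s are then forced by requiring $\theta(e_1, e_j') = 0$, and $k_{e_1}$ is chosen to restore $\theta(e_1, e_1) = 0$. A routine verification confirms that in the resulting basis $(e_1, e_2, \ldots, e_g, f_1, f_2, \ldots, f_g)$ the matrix of $\theta$ has the claimed form $\begin{pmatrix} 0 & \ID + P \\ P^\top & 0 \end{pmatrix}$ with $P$ strictly upper triangular, the consistency $P_{1j} = \theta(e_1, f_j) = \theta(f_j, e_1)$ following from $e_1 \in V^\perp$. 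The main technical obstacle is the bookkeeping in this final adjustment, which goes through thanks to the unimodularity of $\ID + \bar P$.
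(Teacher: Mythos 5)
Your proof is correct, but it takes a genuinely different route from the paper's. The paper outsources the main structural step to an external reference (\cite[Lemma~6 and Remark~7]{Feller_15_DegAlexUpperBoundTopSliceGenus}), which already produces a basis in which the matrix is $\left(\begin{smallmatrix} 0 & \ID+P \\ P^{\top} & Q\end{smallmatrix}\right)$, and then kills the block $Q$ by a single explicit congruence, solving $Q=N+NP+(NP)^{\top}$ for $N$ inductively entry by entry. You instead give a self-contained induction on $g$: evaluating $\det(tM-M^{\top})=t^g$ at $t=0$ produces a primitive left-null vector $f_1$, unimodularity of $M-M^{\top}$ (evaluation at $t=1$) makes $\theta(\cdot,f_1)$ surjective so that $\theta$ descends to the rank-$(2g-2)$ quotient, the determinant identity $\det(tM-M^{\top})=t\det(t\bar M-\bar M^{\top})$ feeds the inductive hypothesis, and a final unipotent adjustment (whose solvability again rests on the unimodularity of $\ID+\bar P$) cleans up the $e_1$-row and column. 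I checked the details and they go through; your closing system for $a$, the $k_j$, and $k_{e_1}$ is triangular and consistent, and it plays exactly the role of the paper's matrix $N$. Two small expository points: the cofactor expansion giving $\det(tM-M^{\top})=t\det(t\bar M-\bar M^{\top})$ needs an expansion along the $f_1$-column of the minor as well as along the $f_1$-row (the row expansion alone leaves a $(2g-1)\times(2g-1)$ minor that is not yet $t\bar M-\bar M^{\top}$); and you should say explicitly that $(e_1,\dots,e_g,f_1,\dots,f_g)$ is a basis of $H$, which follows from $H=V\oplus V^{\perp}$ and the fact that $e_1,f_1$ generate $V^{\perp}$ because $J(e_1,f_1)=1$. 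What your approach buys is independence from the cited lemma (you essentially re-prove it along the way, in a slightly stronger form since you reach the fully reduced shape directly); what the paper's approach buys is brevity and a completely explicit change-of-basis formula once the reference is granted.
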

For general $\theta$, there is no basis such that $P$ is the zero matrix, since the rank of a matrix of $\theta$ is an invariant of the form.
Note that a significantly stronger statement holds:
there are knots which have Alexander-polynomial~$1$, yet do not admit a Seifert matrix as above with $P$ the zero matrix \cite{GaroufalidisTeichner_04_OnKnotswithtrivialAlex}.
\begin{proof}[Proof of \Cref{lem:stdformofalextriv}]
By a calculation provided in~\cite[Lemma~6 and Remark~7]{Feller_15_DegAlexUpperBoundTopSliceGenus}
there is a basis such that the corresponding $(2g\times 2g)$ matrix is of the form $M=\left(\begin{matrix}
0 & \ID + P\\
P^{\top} & Q
\end{matrix}\right),$
where $Q$ is some $g\times g$ matrix. The statement follows by applying the following base change
\[\left(\begin{matrix}
\ID & 0\\
-N & \ID
\end{matrix}\right)M\left(\begin{matrix}
\ID & -N^{\top}\\
0 & \ID \end{matrix}\right)=\left(\begin{matrix}0 & \ID + P\\
P^{\top} & Q-N(\ID+P)-P^{\top}N^{\top}\end{matrix}\right),\]
where $N$ is the unique $g\times g$ matrix that satisfies the equation $Q=N+NP+(NP)^{\top}$.
To be explicit, $N$ is inductively given as follows. Set $N_{11}=Q_{11}$. For the induction step, we fix $\ell\in\{2,3,\ldots,2g-1\}$ and assume $N_{ij}$ is defined whenever $i+j\leq \ell$, and thus we can set
\[
\pushQED{\qed}
N_{ij}=Q_{ij}-\sum_{k=1}^{j-1}(N_{ik}P_{kj})-\sum_{k=1}^{i-1}(N_{jk}P_{ki})\quad\text{whenever }i+j=\ell+1.
\qedhere
\popQED
\]
\renewcommand{\qedsymbol}{}
\end{proof}
\vspace{-2mm}
\subsection{Examples}%
\begin{Example}\label{Ex:12a908}
We consider the  knot $K_{12a908}$ and one of its Seifert matrices
\[
\footnotesize
M=
\begin{pmatrix}
-1&0&-1&0&0&-1\\
-1& 1&-1&1&1&0\\
0&0&-2&0&0&-2\\
1&0&1&-2&0&1\\
0&0&0&1&1&0\\
0& 0& -1& 0& 0& -2
\end{pmatrix}
\]
as provided by KnotInfo~\cite{knotinfo}. There exists an Alexander-trivial subgroup of rank $4$ in $\Z^6$ with respect to the bilinear form represented by $M$. Indeed,
\[
B =
\begin{pmatrix}
3  & 0  & -2&  1   \\
-5 & 1  & 2 &  0   \\
2  & -1 & -1&  0   \\
-2 & 0  & -1&  2   \\
6  & 0  & 0 &  -2  \\
1  & 0  & -1&  1
\end{pmatrix}
\quad\Rightarrow\quad
B^{\top}\cdot M \cdot B =
\begin{pmatrix}
0 & 0 & 1 & -3  \\
0 & 0 & 0 & 1  \\
0 & 0 & -5 & 8  \\
2 & 0 & 5 & -8  \\
\end{pmatrix},
\]
which shows that the columns of $B$ are the basis of an Alexander-trivial subgroup of rank $4$ in $\Z^6$ (note that $B^{\top}MB$ is of the form \cref{eq:MforAlexTriv}).
Furthermore, no Seifert form of $K_{12a908}$ has an Alexander-trivial subgroup of full rank, since the Alexander polynomial of $K_{12a908}$ is different from 1 (it is $4t^3 - 22t^2 + 55t - 73 + 55t^{-1} - 22t^{-2} + 4t^{-3}$). Thus, $\ga(K_{12a908})=1$ by \cref{def:galg}.
In fact, $|\sigma(K_{12a908}) / 2|=1$, where $\sigma(K)$ denotes the signature of the knot $K$, and so
\[\left|\frac{\sigma(K_{12a908})}{2}\right|=\gtop(K_{12a908})=\gst(K_{12a908})=\ga(K_{12a908})=1.\]

The genus of $K_{12a908}$ is 3 (since the degree of the Alexander polynomial of $K_{12a908}$ is 6), the smooth slice genus is 2 (by an argument based on Donaldson's diagonalization theorem;
compare~\cite{mccoylewark}), and the algebraic unknotting number is $2$ \cite{BorodzikFriedl_15_TheUnknottingnumberAndClassInv1,knotorious}.
Therefore, there is no immediate way via the smooth slice genus or the algebraic unknotting number to find that $\gtop(K_{12a908})=1$; while the above calculation of $\ga$ is quite explicit.
\end{Example}
{\makeatletter
\let\par\@@par
\par\parshape0%
\everypar{}
\begin{wrapfigure}{r}{51mm}%
\centering
\vspace{-2.8cm}%
\includegraphics[width=3.5cm]{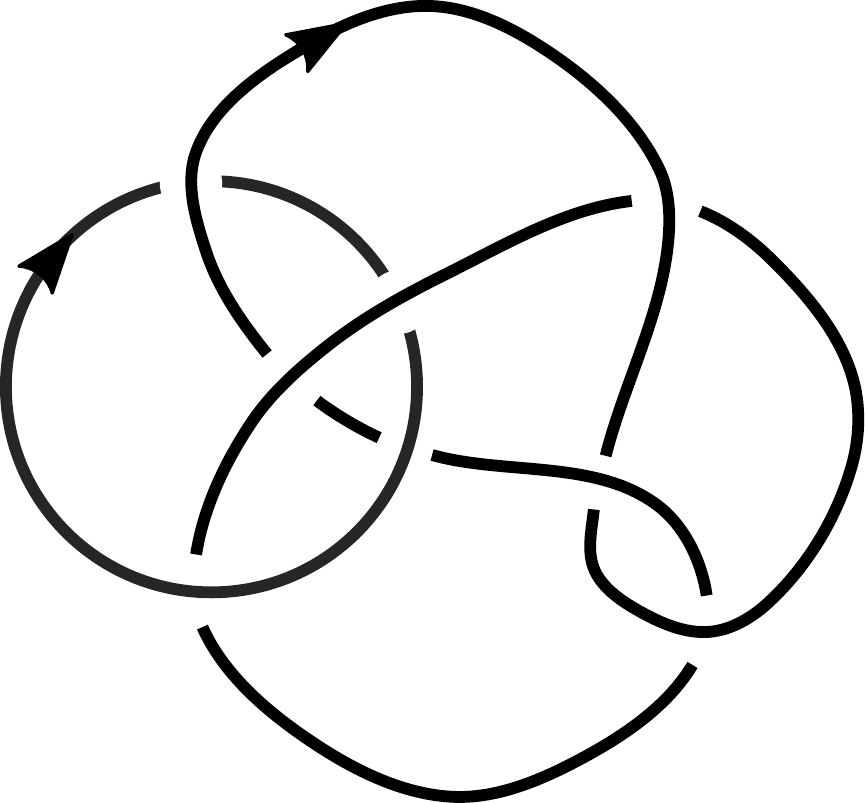}%
\caption[{The link L8n2 (diagram from KnotInfo \cite{knotinfo}).}]{The link L8n2 \linebreak (diagram from KnotInfo \cite{knotinfo}).}%
\label{fig:linkex}%
\end{wrapfigure}%
\noindent\begin{minipage}{7cm}%
\begin{Example}
Let us calculate the algebraic genus of the two-component link $L$ shown below in \Cref{fig:linkex}.
Seifert's algorithm gives a Seifert surface of genus 2 with a Seifert matrix $M$:\\[-1ex]%
\[
M =
\begin{pmatrix}
-1& 1 & 0 & 0 & 0 \\
0 & 0 & 1 & 0 & 0 \\
0 & 1 & 0 & 0 & 1 \\
0 & 0 & 0 & 0 & 0 \\
0 & 0 & 0 & -1& 0
\end{pmatrix}.
\medskip
\]
\end{Example}
\end{minipage}
\par}%
The standard basis vectors $e_1, e_2, e_4, e_5$ form an Alexander-trivial subgroup for $M$, and so $\ga(M) = 0$.
Consequently, all links with the Seifert matrix $M$ are topologically (weakly) slice.
The particular link $L$ turns out to be actually smoothly slice, and has three-genus $1$; note $M$ can be destabilized
by removing $e_4$ and $e_5$.
\subsection{Basic properties of the algebraic genus}
\cref{def:galg} is made such that for all links $L$ the inequality
\begin{equation}\label{eq:gt<=ga}
\gtop(L) \leq \ga(L)
\end{equation}
follows immediately from Freedman's \cref{eq:Alex1<=>boundsstddisk} and the following proposition,
which is proven in detail in~\cite[Proof of Prop.~3]{BaaderFellerLewarkLiechti_15} (compare also~\cite[Proposition~2]{Feller_15_DegAlexUpperBoundTopSliceGenus}).
For the sake of completeness, we nevertheless include a concise version of the proof below.
\Cref{prop:sepcurve} will be used in the proof of \cref{thm:gts<=ga}, which subsumes \cref{eq:gt<=ga}.
\begin{prop}\label{prop:sepcurve}
Let $L$ be a link, $F$ a genus $g$ Seifert surface for $L$, and $\theta$ the Seifert form on $H_1(F;\Z)$.
If $U$ is an Alexander-trivial subgroup of $H_1(F;\Z)$ of rank $2d$, then there exists a separating simple closed curve $K$ on $F$ with the following properties:
\begin{itemize}
  \item the curve $K$ (viewed as a knot in $S^3$) has Alexander polynomial 1,
  \item $F \setminus K = F_1 \sqcup F_2$ such that $\overline{F_1}$ is of genus $d$ with boundary $K$.
\end{itemize}
\end{prop}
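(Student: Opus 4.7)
The plan is to construct $K$ as the boundary of a regular neighborhood of a geometric realization of a symplectic basis of $U$. First, I would apply \cref{lem:stdformofalextriv} to the restriction $\theta|_U$, which by hypothesis has Alexander polynomial $1$. This yields a basis $\{u_1, v_1, \ldots, u_d, v_d\}$ of $U$ in which $\theta|_U - \theta|_U^{\top}$ is the standard symplectic form; equivalently, $u_i\cdot v_j = \delta_{ij}$ and $u_i\cdot u_j = v_i\cdot v_j = 0$ for all $i,j$ under the intersection pairing $\theta-\theta^{\top}$ on $H_1(F;\Z)$.

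Next, I would realize this algebraic basis geometrically. Since $\det(\theta|_U - \theta|_U^{\top}) = 1$, the subgroup $U$ is a direct summand of $H_1(F;\Z)$ with $U\cap\rad_{\theta-\theta^{\top}} = 0$, so each basis vector is primitive in $H_1(F;\Z)$. By standard results on simple closed curves on oriented surfaces, there exist simple closed curves $\alpha_1,\ldots,\alpha_d,\beta_1,\ldots,\beta_d$ on $F$ representing $u_1,\ldots,u_d,v_1,\ldots,v_d$ respectively such that $\alpha_i$ and $\beta_i$ meet transversely in a single point while all other pairs among the $2d$ curves are disjoint. After a further isotopy sliding intersection points along the curves, I would arrange all $d$ intersection points to coincide at a single basepoint $p\in F$, making $\bigcup_{i=1}^d(\alpha_i\cup\beta_i)$ a wedge of $2d$ circles based at $p$.

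A closed regular neighborhood $\overline{F_1}\subset F$ of this wedge is a subsurface homeomorphic to a genus $d$ surface with a single boundary component $K$. Setting $F_2 = F\setminus \overline{F_1}$, one obtains $F\setminus K = F_1\sqcup F_2$ with $\overline{F_1}$ of genus $d$ and $\partial\overline{F_1}=K$; in particular, $K$ is a separating simple closed curve on $F$. Viewing $K$ as a knot in $S^3$, the surface $\overline{F_1}$ is a Seifert surface for $K$ whose Seifert form is precisely $\theta|_U$, so $\Alex{K}=\Alex{\theta|_U}=1$ as required.

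The main obstacle is the geometric realization step: translating an algebraic symplectic basis of a primitive summand of $H_1(F;\Z)$ into simple closed curves on $F$ with the prescribed intersection pattern, all concurrent at a single basepoint. This is standard surface topology---primitive classes on an orientable surface are represented by simple closed curves, and the bigon criterion controls geometric intersection numbers---but careful bookkeeping is needed to ensure simultaneous disjointness of the $\alpha_i$'s, disjointness of the $\beta_i$'s, and a common intersection point.
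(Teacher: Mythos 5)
Your argument is correct and essentially coincides with the paper's: the paper fixes a model separating curve bounding a standard genus-$d$ subsurface and transports it by a diffeomorphism inducing an automorphism of $H_1(F;\Z)$ that carries $U$ onto that subsurface's homology, whereas you realize a symplectic basis of $U$ by simple closed curves and take a regular neighborhood of their union --- two packagings of the same change-of-coordinates step. Note only that the geometric realization you identify as the main obstacle is exactly where the paper invokes the surjection of the mapping class group onto the symplectic group (respecting the boundary classes, which matters since $\rad_{\theta-\theta^{\top}}\neq 0$ for $r>1$); primitivity of the individual classes plus the bigon criterion alone do not yield the simultaneous realization of all $2d$ curves with the prescribed intersection pattern.
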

\begin{proof}[Sketch of the proof]
As discussed at the beginning of the section, $U$ is a summand, i.e.\ $H_1(F;\Z) = U \oplus V$.
One may choose a separating simple closed curve $K'$ on $F$ such that $F \setminus K' = F_1' \sqcup F_2'$,
$\partial F_1' = K'$, and $\rk H_1(F_1'; \Z) = 2d$.
It turns out that there is a group automorphism $\varphi': H_1(F;\Z) \to H_1(F;\Z)$ with $\varphi'(U) = H_1(F_1';\Z)$ that preserves
the intersection form $\theta - \theta^{\top}$ and that maps homology classes given by components of $\partial F$ to homology classes given by components of $\partial F$.
The group automorphism $\varphi'$ is realized as the action of a diffeomorphism $\varphi$ of $F$ since the mapping class group of $F$ surjects onto the symplectic group. Take $K = \varphi(K')$ and $F_i = \varphi(F_i')$.
Then $\overline{F_1}$ is a genus $d$ Seifert surface of $K$ and
the corresponding Seifert form is given by $\theta|_{U}$, hence $\Alex{K}=1$.
\end{proof}

The next proposition shows that $\ga(L)$ only depends on the S-equivalence class of  Seifert forms of $L$.
\begin{prop}\label{prop:gaS-equivalenceclass}
For all links $L$,
\[\ga(L)=\min\left\{\widetilde{\ga}(\theta)\;\middle|\begin{array}{c}\;\text{$\theta$ is a bilinear form that is S-equivalent}\\\text{ to a Seifert form (and thus all) for $L$}\end{array}\right\}.\]
\end{prop}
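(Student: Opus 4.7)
The plan is to reduce the proposition to the classical fact that any two Seifert forms of the same link $L$ lie in a single $S$-equivalence class. Granted this, the assertion becomes a matter of unwinding definitions: by construction,
\[
\ga(L) \;=\; \min_{\theta}\ga(\theta) \;=\; \min_{\theta}\,\min_{\eta\sim_{S}\theta}\widetilde{\ga}(\eta),
\]
where the outer minimum ranges over Seifert forms $\theta$ of $L$. If every such $\theta$ belongs to one and the same $S$-equivalence class, then the set $\{\eta : \eta\sim_{S}\theta\}$ is independent of the choice of $\theta$, and the double minimum collapses to $\min\{\widetilde{\ga}(\eta) : \eta\sim_{S}\theta_{0}\}$ for any fixed Seifert form $\theta_{0}$ of $L$; this is exactly the right-hand side of the proposition.

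The only substantive step is therefore establishing that all Seifert forms of $L$ are $S$-equivalent. First I would recall the classical result---due to Trotter and Murasugi for knots, and extending routinely to links---that any two Seifert surfaces of $L$ can be connected by a finite sequence of ambient isotopies together with tube attachments and removals. Then I would observe that a tube attachment changes the Seifert form precisely by an elementary stabilization, and that this move respects the data entering the definition of $\widetilde{\ga}$: the rank of $\rad_{\theta-\theta^{\top}}$ (which encodes the number of link components) and the condition that the intersection form on the quotient have determinant $1$. I would invoke this as a known fact rather than reprove it.

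The full argument then occupies only a few lines. Fix any Seifert form $\theta_{0}$ of $L$. Since every Seifert form $\theta$ of $L$ satisfies $\theta\sim_{S}\theta_{0}$, the definition of $\ga$ on bilinear forms yields $\ga(\theta)=\ga(\theta_{0})$, and hence $\ga(L)=\ga(\theta_{0})$ by \Cref{def:galg}. But $\ga(\theta_{0})=\min\{\widetilde{\ga}(\eta) : \eta\sim_{S}\theta_{0}\}$ by definition, which matches the right-hand side of the proposition. The main (and essentially only) obstacle is the appeal to the link version of the $S$-equivalence of Seifert forms; everything else is bookkeeping that follows directly from \Cref{def:galg} and the definition of $\ga(\theta)$.
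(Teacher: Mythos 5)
Your argument is formally valid for the identity as displayed, but it takes a genuinely different---and much lighter---route than the paper, and it is worth being clear about what it does and does not deliver. Since \cref{def:galg} defines $\ga(L)$ as a minimum of $\ga(\theta)$, and $\ga(\theta)$ is itself already defined as the minimum of $\widetilde{\ga}$ over the entire S-equivalence class of $\theta$, the displayed equality does indeed collapse to the classical fact that all Seifert forms of $L$ are S-equivalent, exactly as you say; that external input is correctly identified and is also what the paper relies on for the first inequality of \cref{eq:gagaga}. The paper's proof, however, is engineered to establish something stronger that your collapse does not give: the minimum on the right-hand side is \emph{attained} by a form that is genuinely realized as the Seifert form of a Seifert surface of $L$. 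This is done by taking a common stabilization $\theta$ of a realized Seifert form $\theta_1$ and an arbitrary S-equivalent form $\theta_2$, noting that stabilizations of Seifert forms are geometrically realizable by stabilizing the surface, and invoking \cref{lemma:stabdecreasesga} to conclude $\widetilde{\ga}(\theta)\leq\widetilde{\ga}(\theta_2)$. That attainment statement is what reconciles \cref{def:galg} with the formula for $\ga(L)$ given in the introduction (a minimum over actual Seifert matrices of $L$), and it is what is used downstream when the paper chooses a Seifert surface $F$ with $\ga(L)=g(F)-d$ in the proofs of \cref{thm:gts<=ga} and \cref{prop:3Dcharofga}. So your proof is correct for the literal statement, but it sidesteps the real content; the stabilization argument, and in particular \cref{lemma:stabdecreasesga}, is what you would still need before using $\ga$ the way the rest of the paper does.
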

\begin{proof}
Clearly $\geq$ holds, since the minimum ranges over a larger class of bilinear forms.
For the other direction, we recall that any bilinear form S-equivalent to a Seifert form, can be stabilized to become a Seifert form. Indeed, let $\theta_1$ be a Seifert form for $L$ and $\theta_2$ any bilinear form S-equivalent to $\theta$. There exists a bilinear form $\theta$ that arises as the stabilization of both $\theta_1$ and $\theta_2$. Since stabilizations of a Seifert form can be realized geometrically by a stabilization of the corresponding Seifert surface, the bilinear form $\theta$ arises as a Seifert form.
Now, the statement follows from the following lemma about stabilizations.
\end{proof}
\begin{lemma}\label{lemma:stabdecreasesga}
Let $\theta$ be a bilinear form arising as Seifert form of a link, and let $\eta$ be obtained from $\theta$ by a stabilization. Then
$\widetilde{\ga}(\eta)\leq\widetilde{\ga}(\theta)$.
\end{lemma}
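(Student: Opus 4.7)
The plan is to exhibit, for any Alexander-trivial subgroup $U$ of $\theta$ of rank $2d$, an Alexander-trivial subgroup $U'$ of $\eta$ of rank $2d+2$. Since a stabilization increases the genus by one while keeping the number of link components $r$ unchanged, $\eta$ lives on $\Z^{2(g+1)+r-1}$, so this suffices: applied to a maximal Alexander-trivial subgroup of $\theta$, it yields
\[
\widetilde{\ga}(\eta) \leq (g+1) - (d+1) = g - d = \widetilde{\ga}(\theta).
\]

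To set this up, I would fix a matrix representative. After a suitable change of basis, a stabilization of $\theta$ is represented by a matrix of the block form
\[
N = \begin{pmatrix} M & v & 0 \\ 0 & 0 & 1 \\ 0 & 0 & 0 \end{pmatrix},
\]
where $M$ represents $\theta$ and $v$ is some integer column vector. Writing $f_1, f_2$ for the two new basis vectors, I would take $U' := U \oplus \Z f_1 \oplus \Z f_2$. With respect to a basis of $U$ together with $f_1, f_2$, the form $\eta|_{U'}$ is represented by the $(2d+2)\times(2d+2)$ matrix
\[
N|_{U'} = \begin{pmatrix} M|_U & v|_U & 0 \\ 0 & 0 & 1 \\ 0 & 0 & 0 \end{pmatrix},
\]
where $M|_U$ represents $\theta|_U$ and $v|_U$ is the sub-column of $v$ indexed by the chosen basis of $U$.

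The crux is a direct cofactor expansion. In $tN|_{U'} - N|_{U'}^\top$, the row indexed by $f_2$ contains only the single non-zero entry $-1$ at position $(2d+2,\, 2d+1)$. Expanding along this row reduces the computation, up to sign, to the determinant of the block lower triangular $(2d+1)\times(2d+1)$ matrix
\[
\begin{pmatrix} tM|_U - M|_U^\top & 0 \\ -v|_U^\top & t \end{pmatrix},
\]
which equals $t \cdot \det(tM|_U - M|_U^\top) = t\cdot t^d = t^{d+1}$ by Alexander-triviality of $U$. Keeping track of signs gives $\det(tN|_{U'} - N|_{U'}^\top) = t^{d+1}$, a unit in $\Z[t^{\pm 1}]$, so $U'$ is indeed Alexander-trivial of rank $2d+2$.

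The main (and essentially only) obstacle is a bookkeeping issue: S-equivalence stabilizations appear in the literature in two shapes related by transposition of the added block, so one should verify that both cases are handled by the same cofactor expansion (with the roles of $f_1$ and $f_2$ interchanged). Once this is addressed, the proof reduces to the short determinant computation above.
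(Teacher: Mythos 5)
Your proof is correct and takes essentially the same route as the paper: both arguments exhibit $U\oplus\Z^2$ as an Alexander-trivial subgroup of rank $2d+2$ for the stabilized form $\eta$ and conclude $\widetilde{\ga}(\eta)\leq (g+1)-(d+1)=\widetilde{\ga}(\theta)$. The paper simply asserts that $U\oplus\Z^2$ is Alexander-trivial, whereas you verify it by the cofactor expansion along the row of the new meridian generator; this expansion works uniformly for either convention of the stabilization block, so your bookkeeping concern is harmless.
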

\begin{proof}
For the bilinear form $\theta$ on $\Z^{2g+r-1}$, let $U\subseteq \Z^{2g+r-1}$ be an Alexander-trivial subgroup of maximal rank, and denote this rank by $2d$. We view the stabilization $\eta$ of $\theta$ as a bilinear form on
 $\Z^{2g+r-1}\oplus\Z^2$ such that with respect to the standard basis $\eta$ is given by
\begin{equation*}\left(
{\begin{tabular}{ccc|cc}&&&&0\\
&$M$&&&0\\
&&&$v$&$\vdots$\\
&&&&$0$\\
\hline
\rule{0pt}{2.2ex}&$v^{\top}$&&0&1%
\\
0&$\cdots$&0&0%
&0\\
\end{tabular}}
\right),\end{equation*}
where $M$ represents $\theta$ and $v$ is some element of $\Z^{2g+r-1}$. The statement follows from the fact that $U\oplus\Z^2$ is an Alexander-trivial subgroup of $\Z^{2g+r-1}\oplus\Z^2$ with respect to $\eta$.
\end{proof}
\subsection{Lower bounds for the algebraic genus of links}
For knots, prominent lower bounds for the algebraic genus come from the ranks of branched covers (see \Cref{fig:invdiag}),
and from Taylor's invariant; the latter statement is evident from the definitions, because an Alexander-trivial subgroup of rank $2d$ contains a totally isotropic subgroup of rank $d$. In this subsection we investigate what can be proved for multi-component links.

For this purpose, let
$\eta(L)$ denote the nullity of a link $L$, defined as $\dim (\rad_{(\theta + {\theta}^{\top})\otimes\Q})$ for any Seifert form $\theta$ of $L$ (i.e.~interpret $\theta + {\theta}^{\top}$ as a bilinear form over the rationals and take the dimension of its radical).
Let $r_2(L)$ be the minimum number of generators of $H_1(M_2(L);\Z)$,
the first integral homology group of the double branched covering $M_2(L)$.

\begin{prop}\label{prop:bounds}
If $L$ is an $r$--component link, then:
\begin{align}
|\sigma(L)| + \eta(L) -r + 1 & \leq 2\ga(L), \tag{i}\\
r_2(L) & \leq 2\ga(L).\tag{ii}
\end{align}
\end{prop}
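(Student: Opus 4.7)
The plan is to fix a Seifert surface $F$ for $L$ with Seifert form $\theta$ on $H:=H_1(F;\Z)\cong\Z^{2g+r-1}$ realising $\ga(L)=\widetilde{\ga}(\theta)=g-d$, together with a maximal Alexander-trivial subgroup $U\subseteq H$ of rank $2d$. Both bounds are then derived from a single structural input: the symmetrised form $S:=\theta+\theta^{\top}$, when restricted to $U$, is unimodular and has vanishing signature.

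First I would verify these two properties of $S|_U$. By \cref{lem:stdformofalextriv}, I may choose a basis of $U$ in which $\theta|_U=\bigl(\begin{smallmatrix}0 & \ID+P\\P^{\top} & 0\end{smallmatrix}\bigr)$ with $P$ strictly upper triangular. Then $S|_U=\bigl(\begin{smallmatrix}0 & \ID+2P\\ \ID+2P^{\top} & 0\end{smallmatrix}\bigr)$ has determinant $\pm\det(\ID+2P)^2=\pm 1$ (since $\ID+2P$ is unipotent upper triangular), and has vanishing signature, since antidiagonal block matrices of this shape have eigenvalues in $\pm$-pairs.

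Unimodularity of $S|_U$ implies that the map $H\to U^*$, $v\mapsto S(v,\cdot)|_U$, is integrally split surjective, yielding an integral orthogonal decomposition $H=U\oplus U^\perp$ with $S=S|_U\oplus S|_{U^\perp}$ as integral symmetric bilinear forms; the complement $U^\perp$ has rank $2g+r-1-2d$. Part (i) now follows immediately by additivity: vanishing of signature and nullity on $U$ yields $\sigma(L)=\sigma(S|_{U^\perp})$ and $\eta(L)=\eta(S|_{U^\perp})$, while the standard inequality $|\sigma|+\eta\leq\rk$ applied to $U^\perp$ rearranges to (i). For part (ii), the classical identification $H_1(M_2(L);\Z)\cong\coker(S)$ combines with the splitting to give $\coker(S)\cong\coker(S|_U)\oplus\coker(S|_{U^\perp})=0\oplus\coker(S|_{U^\perp})$, and the latter is presented by a square matrix of size $\rk(U^\perp)$, bounding its minimum number of generators accordingly.

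The main conceptual obstacle is that the orthogonal decomposition must be integral, not merely rational, for both the cokernel splitting and the integral additivity of signature and nullity to apply as stated; this is secured precisely by the unimodularity of $S|_U$, making the normal form of \cref{lem:stdformofalextriv} the crucial technical input. Everything else reduces to routine bilinear algebra.
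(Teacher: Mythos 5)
Your argument is correct in substance but takes a somewhat more structural route than the paper's. For (i), the paper simply observes that an Alexander-trivial subgroup $U$ of rank $2(g-\ga(L))$ forces both inertia indices $n_{\pm}$ of the symmetrization to be at least $g-\ga(L)$, and concludes from $n_++n_-+\eta(L)=2g+r-1$; your orthogonal splitting $H=U\oplus U^{\perp}$ reaches the same conclusion with slightly more machinery, and your verification that $S|_U$ is unimodular of signature zero (via \cref{lem:stdformofalextriv}) is exactly the content the paper leaves implicit. For (ii) the routes genuinely diverge: the paper picks a prime $p$ with $\dim_{\Z/p}H_1(M_2(L);\Z/p)=r_2(L)$ and counts ranks of $A+A^{\top}$ over $\Z/p$, whereas you split $\coker(\theta+\theta^{\top})$ integrally. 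Your version is arguably cleaner --- it shows directly that the whole group $H_1(M_2(L);\Z)$, and not merely each mod-$p$ reduction, is generated by $\rk(U^{\perp})$ elements, and it reuses the same splitting for both parts; the paper's mod-$p$ argument, on the other hand, needs only $\det(B+B^{\top})=\pm\Alex{\theta|_U}(-1)=\pm1$ and not the normal form lemma.

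The one point to flag is the final count in (ii). Your presenting matrix for $\coker(S|_{U^{\perp}})$ has size $\rk(U^{\perp})=2g+r-1-2d=2\ga(L)+r-1$, so what your argument actually proves is $r_2(L)\leq 2\ga(L)+r-1$, which coincides with the stated inequality only for knots ($r=1$). This is not a defect of your approach: the paper's own proof has the identical shortfall (it records the corank of $A+A^{\top}$ over $\Z/p$ as $2g-2d$ where the argument yields $2g+r-1-2d$), and indeed (ii) as literally stated fails for $r>1$ --- the two-component unlink has $\ga=0$ but $H_1(M_2;\Z)\cong\Z$, so $r_2=1$. So for knots your proof is complete, and for links the discrepancy lies in the statement rather than in anything you could repair.
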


Note that both lower bounds for $2\ga(L)$ are classical, and additive with respect to the connected sum along arbitrary components.
\begin{proof}
\textbf{(i):}
Of course, (i) follows because $2\gtop(L)$ is greater than or equal to the left hand side, and less than or equal to the right hand side,
but there is also a more direct reason:
pick a Seifert form $\theta: \Z^{2g + r - 1}\times\Z^{2g + r - 1}\to\Z$
of $L$ with $\widetilde{\ga}(\theta) = \ga(L)$. Denote by $n_{\pm}$ the indices of inertia of the symmetrization of $\theta$,
so that $n_+ - n_- = \sigma(L)$ and $n_+ + n_- + \eta(L) = 2g + r - 1$.
There is an Alexander-trivial subgroup of rank $2(g - \ga(L))$, which implies that both $n_+$ and $n_-$
are greater than or equal to $g - \ga(L)$. Hence $|\sigma(L)| \leq 2\ga + r - 1 - \eta(L)$.

\textbf{(ii):}
Note that if $A$ is a matrix for $\theta$, then $H_1(M_2(L);\Z)$ is isomorphic to the cokernel of $A + A^{\top}$.
By the classification of finite abelian groups, there is a prime $p$ such that $\dim_{\Z/p} H_1(M_2(L);\Z/p) = r_2(L)$.
If $U$ is an Alexander-trivial subgroup of rank $2d$,
and $B$ is a matrix of $\theta|_U$,
then $\det(B + B^{\top}) = \pm \Alex{\theta|_U}(-1) =  \pm 1$.
So $B + B^{\top}$ has full rank $2d$ over $\Z/p$, and thus $A + A^{\top}$ has rank at least $2d$ over $\Z/p$.
Thus $\coker(A + A^{\top}) \otimes \Z/p \cong H_1(M_2(L);\Z/p)$ has dimension at most $2g - 2d = 2\widetilde{\ga}(\theta)$ over $\Z/p$.
This means that $r_2(L) \leq 2\widetilde{\ga}(\theta)$. Since $r_2$ is invariant under S-equivalence, this
implies (ii).
\end{proof}
\begin{rmk}
Consider a Seifert form $\theta$ of a knot of dimension $2g$
with maximal~$r_2$, i.e.\ $r_2(\theta) = 2g$. Clearly, that condition is equivalent
to the existence of an  odd prime $p$ modulo which $\theta + \theta^{\top}$ vanishes ($\theta + \theta^{\top}$ cannot vanish modulo $2$, since its determinant is odd).
Such a form may be realized as Seifert form of a knot with topological slice genus $g$ (\cref{thm:cg}).
Moreover, all knots $K$ admitting such a Seifert form have some peculiar properties:
for example, the unknotting number of $K$ is bounded below by $\ua(\theta) = 2g$;
and all $2g$ Alexander ideals are non-trivial over $\Z$, since they are sent to $p\Z$ by the substitution $t = -1$.
\end{rmk}
\subsection{The stable algebraic genus}
If $\theta, \zeta$ are Seifert forms with respective Alexander-trivial subgroups $U, V$, then
$\theta\oplus\zeta$ has the Alexander-trivial subgroup $U\oplus V$. This implies that $\ga$ is
subadditive with respect to the connected sum of links:
\[
\ga(L\# L') \leq \ga(L) + \ga(L')
\]
for all links $L$ and $L'$, where the connected sum is taken along arbitrary components. Let us construct an example demonstrating that $\ga$ is in general not additive.
Take $\theta$ to be the Seifert form given by
\[
\begin{pmatrix}
1 & 1 \\ 0 & -1
\end{pmatrix},
\]
for which we have $\ga(\theta) = 1$, since $\Alex{\theta} \neq 1$. On the other hand, the form $\theta\oplus\theta$ admits
an Alexander-trivial subgroup $U$ of rank $2$ generated by $(1,0,0,1)$ and $(0,1,0,0)$. Indeed
\[
\begin{pmatrix}
1 & 0 & 0 & 1\\
0 & 1 & 0 & 0
\end{pmatrix}
\cdot
\begin{pmatrix}
1 & 1  & 0 & 0 \\
0 & -1 & 0 & 0 \\
0 & 0 & 1 & 1  \\
0 & 0 & 0 & -1 \\
\end{pmatrix}
\cdot
\begin{pmatrix}
1 & 0 \\
0 & 1 \\
0 & 0 \\
1 & 0
\end{pmatrix}
=\begin{pmatrix}
0 & 1 \\
0 & -1
\end{pmatrix},
\]
and thus $\ga(\theta \oplus \theta) = 1$.

Following Livingston's definition of the stable slice genus \cite{stable}, one may define the \emph{stable algebraic genus} $\widehat{\ga}(L)$ of a link $L$ as
\[
\widehat{\ga}(L) = \lim_{n\to\infty}\frac{\ga(L^{\# n})}{n}.
\]
Similarly one can define $\widehat{\gtop}(L)$ and $\widehat{\gst}(L)$ for links $L$ with a distinguished component along which the connected sums are taken.
It is now an immediate consequence of \cref{thm:gts<=ga} that for all links $L$ with distinguished component,
\[
\widehat{\ga}(L) \geq \widehat{\gst}(L) \geq \widehat{\gtop}(L).
\]
These inequalities give some motivation for studying the stable algebraic genus.
We will refrain from doing so here, with the exception of the following proposition
which results from strengthening a result of  Baader \cite{baaderIndef} by recasting his argument algebraically, making connections to $r_2$,
and generalizing to multi-component links.

Note that the lower bounds for $2\ga$ in \Cref{prop:bounds}, being additive, are also lower bounds for $2\widehat{\ga}$.
The following proposition shows that taken together they characterize
knots whose stable algebraic genus is strictly less than their genus.
\begin{prop}\label{prop:charstabgalg<g}
For all $r$--component links $L$,
$\widehat{\ga}(L) < g(L)$
holds if and only if
\[
\max\{|\sigma(L)| + \eta(L), r_2(L)\}\ <\ 2g(L) + r - 1.
\]
In particular for all knots $K$, $\widehat{\ga}(K) < g(K)$ holds if and only if
\[
\max\{|\sigma(K)|, r_2(K)\} < 2g(K).
\]
\end{prop}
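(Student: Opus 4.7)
The two implications are treated separately. For the \emph{only if} direction, observe that both lower bounds of \cref{prop:bounds} are additive under connected sum along the distinguished component: the Seifert form of $L^{\# n}$ is $\theta^{\oplus n}$, hence $|\sigma(L^{\#n})| = n |\sigma(L)|$, $\eta(L^{\#n}) = n \eta(L)$, and $r_2(L^{\#n}) = n\, r_2(L)$, while $L^{\#n}$ has $n(r - 1) + 1$ components and $g(L^{\#n}) = n g(L)$. Applying \cref{prop:bounds} to $L^{\#n}$, dividing by $n$, and taking $n\to\infty$ yields
\[
|\sigma(L)| + \eta(L) - r + 1 \leq 2\widehat{\ga}(L) \et r_2(L) \leq 2\widehat{\ga}(L),
\]
so $\widehat{\ga}(L) < g(L)$ forces $|\sigma(L)| + \eta(L) < 2g(L) + r - 1$ and $r_2(L) < 2g(L) \leq 2g(L) + r - 1$.

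For the converse, assume $\max\{|\sigma(L)|+\eta(L),\, r_2(L)\} < 2g(L) + r - 1$, and let $\theta$ be a Seifert form of $L$ on $H \cong \Z^{2g + r - 1}$ realizing $g = g(L)$. It suffices to exhibit some $n \geq 1$ for which $\theta^{\oplus n}$ admits an Alexander-trivial subgroup of rank $2$: by subadditivity of $\ga$ this gives $\widehat{\ga}(L) \leq \ga(L^{\# n})/n \leq (n g - 1)/n < g(L)$. In view of the standard form \eqref{eq:MforAlexTriv} with $2d = 2$, such a subgroup amounts to a pair $u, v \in H^{\oplus n}$ spanning a rank-$2$ direct summand and satisfying $\theta^{\oplus n}(u, u) = 0$, $\theta^{\oplus n}(u, v) = 1$, and $\theta^{\oplus n}(v, u) = 0$.

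The hypothesis $|\sigma(L)| + \eta(L) < 2g + r - 1$ says that the non-degenerate quotient of $\theta + \theta^\top$ is an indefinite rational symmetric form. Its $n$-fold direct sum therefore has Witt index tending to infinity with $n$, and by Meyer's theorem, for $n$ sufficiently large there exist primitive vectors $u \in H^{\oplus n}$ with $\theta^{\oplus n}(u,u) = 0$; moreover these can be chosen outside the rank-$n(r-1)$ antisymmetric radical of $\theta^{\oplus n} - (\theta^{\oplus n})^\top$. The hypothesis $r_2(L) < 2g + r - 1$ is equivalent to $\theta + \theta^\top$ having a unit elementary divisor, which after stabilization is exactly the integrality hypothesis needed to solve the two linear conditions $\theta^{\oplus n}(u, v) = 1$ and $\theta^{\oplus n}(v, u) = 0$ over $\Z$, with $\langle u, v\rangle$ a direct summand.

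I expect this last step---the integral linear-algebraic construction of $v$ from a carefully chosen isotropic $u$, using the $r_2$-hypothesis---to be the main obstacle. Over $\Q$ the solvability is immediate, since the functionals $\theta^{\oplus n}(u, \cdot)$ and $\theta^{\oplus n}(\cdot, u)$ are generically linearly independent; what the $r_2$-condition is meant to supply is the elimination of the prime-divisibility obstructions to a primitive integer solution making $\langle u, v\rangle$ a summand, by allowing $u$ to be adjusted within the isotropic cone of $(\theta+\theta^\top)^{\oplus n}$. The \textquotedblleft in particular\textquotedblright\ clause is immediate, since for knots $\eta(K) = 0$ and $r = 1$.
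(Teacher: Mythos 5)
Your ``only if'' direction is correct and coincides with the paper's: the two lower bounds of \cref{prop:bounds} are additive under connected sum, hence pass to $2\widehat{\ga}$. Your reduction of the ``if'' direction to producing a rank-$2$ Alexander-trivial subgroup of $\theta^{\oplus n}$ for some $n$ is also the right first move, and your observation that $r_2(L)<2g+r-1$ amounts to $A+A^{\top}$ having a unit elementary divisor (equivalently, coprime entries) is correct. But the proposal stops exactly where the real work begins: you never construct the pair $(u,v)$, and you say yourself that the integral solvability of $\theta^{\oplus n}(u,v)=1$, $\theta^{\oplus n}(v,u)=0$ for a suitably adjusted isotropic $u$ is ``the main obstacle''. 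That step is the entire arithmetic content of the proposition and it is not clear your route can be completed: for a fixed isotropic $u$, the image of $v\mapsto(\theta^{\oplus n}(u,v),\theta^{\oplus n}(v,u))$ is merely some subgroup of $\Z^2$, and you give no mechanism by which the $r_2$-hypothesis lets you move $u$ within the isotropic cone until $(1,0)$ lies in that image. (Note also that primitivity of $\langle u,v\rangle$ is automatic once the three pairing conditions hold, since the restricted antisymmetrization is then unimodular; that is not where the difficulty sits.)

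The paper sidesteps the ``find $u$, then solve for $v$'' scheme entirely. Let $\mathcal{F}\subset\Z$ be the set of values represented by the quadratic forms $Q_n(v)=v^{\top}A^{\oplus n}v$ over all $n\geq 1$. Indefiniteness of the symmetrized form (from $|\sigma(L)|+\eta(L)<2g+r-1$) is used to show that $\mathcal{F}$ is closed under negation and hence is the subgroup of $\Z$ generated by the $A_{kk}$ and $A_{ij}+A_{ji}$; the hypothesis $r_2(L)<2g+r-1$ then forces this subgroup to be all of $\Z$, since otherwise a common divisor of the entries of $A+A^{\top}$ would make $r_2$ maximal. (The same hypothesis also gives $g>0$, needed below.) One then picks $v_1,v_2$ with $v_1^{\top}(A-A^{\top})v_2=1$, which is possible because $g>0$, and appends vectors $w_1,w_2$ lying in $n_1$ and $n_2$ extra copies of the form with $Q_{n_1}(w_1)=-v_1^{\top}Av_2$ and $Q_{n_2}(w_2)=v_1^{\top}A(v_2-v_1)$; a direct computation shows that $u_1=(v_1,w_1,w_2)$ and $u_2=(v_2,w_1,0)$ span an Alexander-trivial rank-$2$ subgroup of $\theta^{\oplus(1+n_1+n_2)}$. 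In other words, the needed diagonal and off-diagonal values are arranged by \emph{representing prescribed integers stably}, not by solving an integral linear system in $v$ for a cleverly chosen isotropic $u$. Supplying this construction (or a genuine substitute for it) is what your argument is missing.
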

\begin{proof}
The `only if' part of the statement were discussed in the paragraph preceding the proposition.
Let us now prove the `if' part. For this, fix a Seifert matrix $A$ of $L$ of size $(2g + r - 1) \times (2g + r - 1)$,
where $g$ is the genus of $L$.
Note that $g > 0$. Indeed, $g = 0$ implies $A = A^{\top}$, whence
$A + A^{\top}$ is the zero matrix modulo $2$, which implies $r_2(L) = r - 1$, contradicting the hypotheses.
We denote by $Q_n: \mathbb{Z}^{(2g + r - 1)n} \to \mathbb{Z}$ the quadratic form defined by $A^{\oplus n}$,
i.e.~$Q_n(v) = v^{\top} A^{\oplus n} v$, and let $\mathcal{F}\subset\mathbb{Z}$ be the union of the images of $Q_n$ for all $n \geq 1$.

In a first step, we prove that $\mathcal{F}$ is equal to the subgroup $\mathcal{G}$ of $\Z$ generated by $A_{ij} + A_{ji}$ and $A_{kk}$ for $1\leq i<j\leq 2g+r-1$ and $1\leq k\leq 2g+r-1$.
To show `$\mathcal{F}\subset \mathcal{G}$', we notice that by definition elements $x\in \mathcal{F}$ are sums of elements of the form \[v^{\top} A v=  \sum_{k=1}^{2g+r-1} v_k^2 A_{kk} +
\sum_{1\leq i < j \leq {2g+r-1}} v_iv_j(A_{ij} + A_{ji}),
\] for $v\in\mathbb{Z}^{2g + r - 1}$, which are $\mathbb{Z}$--linear combinations of $A_{ij} + A_{ji}$ and $A_{kk}$.
To show `$\mathcal{F}\supset \mathcal{G}$', it is sufficient to show that $\mathcal{F}$ is a subgroup of $\mathbb{Z}$
and contains all
$A_{ij} + A_{ji}$ and $A_{kk}$. Clearly, $\mathcal{F}$ is non-empty as $0\in \mathcal{F}$.
If $x_1, x_2\in \mathcal{F}$, and $x_{\ell} = Q_{n_{\ell}}(v_{\ell})$, then
\[
x_1 + x_2 = Q_{n_1 + n_2}\begin{pmatrix} v_1 \\ v_2\end{pmatrix}  \in \mathcal{F}.
\]
This proves that $\mathcal{F}$ is closed under addition.
Now, let a non-zero integer $x\in \mathcal{F}$ be given.
We are going to show that $-x\in\mathcal{F}$, which will complete the proof that $\mathcal{F}\subset\mathbb{Z}$ is a subgroup.
Since $|\sigma(L)| + \eta(L) < 2g + r - 1$,
the form $Q_1$ is indefinite, and so there exists $y\in\mathcal{F}$ with the opposite sign of $x$.
We first consider the case $x > 0$ and $y < 0$.
Since $\mathcal{F}$ is closed under addition and thus also under multiplication with positive integers, we find
\[
x \cdot y, (-y - 1)\cdot x \in \mathcal{F} \quad\Rightarrow\quad x \cdot y + (-y - 1)\cdot x = -x \in \mathcal{F},
\]
If instead, we have $x < 0$ and $y > 0$, then we similarly find
\[
(- x) \cdot y, (y - 1)\cdot x \in \mathcal{F} \quad\Rightarrow\quad (-x) \cdot y + (y - 1)\cdot x = -x \in \mathcal{F}.
\]
Finally, let us check that all $A_{ij} + A_{ji}$ and $A_{kk}$ are in $\mathcal{F}$.
Firstly, $Q_1(e_k) = A_{kk}$, where $e_k$ denotes $k$--th standard basis vector.
Secondly,
\[
Q_3
\begin{pmatrix} e_i + e_j \\ -e_i \\ -e_j \end{pmatrix} = A_{ij} + A_{ji}.
\]

As a second step, we prove that $\mathcal{F} = \mathcal{G}=\Z$.
Note that by definition $\mathcal{G}$ contains every entry of $A + A^{\top}$,
i.e.~$A_{ij}+A_{ji}$ and $2A_{kk}$ for $1\leq i<j\leq 2g+r-1$ and $1\leq k \leq 2g+r-1$.
Assume towards a contradiction that $\mathcal{G} \neq \Z$.
This would imply that the greatest common divisor of the entries of $A + A^{\top}$ is non-trivial.
Now recall that $A + A^{\top}$ is a presentation matrix of the first integral homology group of the double branched covering of $L$. Therefore, that homology group would be the sum of $2g + r - 1$ groups of the form $\mathbb{Z}/a_id$ for some $a_i\in\mathbb{Z}$, in contradiction to $r_2(L) < 2g + r - 1$.

To finish, pick two vectors $v_1, v_2\in\mathbb{Z}^{2g + r - 1}$ with $v_1^{\top} (A - A^{\top}) v_2 = 1$.
This is possible since $g > 0$. Since $\mathcal{F}=\Z$, there exist two positive integers $n_1$ and $n_2$ and another two vectors $w_1, w_2$ with $w_i\in\mathbb{Z}^{(2g + r - 1)n_i}$ such that
$Q_{n_1}(w_1) = - v_1^{\top} A v_2$ and $Q_{n_2}(w_2) = v_1^{\top} A (v_2 - v_1)$.
Then
\[
u_1 = \begin{pmatrix} v_1 \\ w_1 \\ w_2 \end{pmatrix} \in \mathbb{Z}^{(2g + r - 1)(1+n_1+n_2)},\quad
u_2 = \begin{pmatrix} v_2 \\ w_1 \\ 0 \end{pmatrix}   \in \mathbb{Z}^{(2g + r - 1)(1+n_1+n_2)}
\]
generate an Alexander-trivial subgroup since
\[
\begin{pmatrix} u_1^{\top} \\ u_2^{\top} \end{pmatrix}
A^{\oplus(1+n_1+n_2)}
\begin{pmatrix} u_1 & u_2 \end{pmatrix} =
\begin{pmatrix} 0 & 0 \\ -1 & (v_2-v_1)^{\top}Av_2 \end{pmatrix}.
\]
This implies that $\ga(L^{\# (n_1 + n_2 + 1)}) < (n_1 + n_2 + 1)g(L) = g(L^{\# (n_1 + n_2 + 1)})$, and therefore $\widehat{\ga}(L) < g(L)$.
\end{proof}
\begin{rmk}
Livingston \cite{stable} constructed a family of knots $K_i$ with \linebreak $\widehat{\gtop}(K_i) < 1$
and $\lim_{i\to\infty} \widehat{\gtop}(K_i) = 1$.
That shows that the hypotheses of \cref{prop:charstabgalg<g} cannot give a lower bound
for the difference $g(K) - \widehat{\ga}(K)$.
\end{rmk}

\section{Three-dimensional characterizations of $\ga$}\label{sec:3d}
\begin{definition}
A \emph{3D-cobordism} between two links $L_1$ and $L_2$ with $r_1$ and $r_2$ components, respectively, is a Seifert surface for a link with $r_1+r_2$ components such that the link given by the first $r_1$ components is $L_1$ and the link given by the other components is $L_2^{\mathrm{rev}}$ with reversed orientation.
\end{definition}
For context, recall that a cobordism between two links $L_0$ and $L_1$ is an oriented connected smooth embedded surface $C$ in $S^3\times[0,1]$ such that $\partial C=L_0\times\{0\}\sqcup L_1^{\mathrm{rev}}\times\{1\}$. 3D-cobordisms correspond to cobordisms $C$ such that the projection $S^3\times[0,1]\to S^3$ to the first factor restricts to an embedding on~$C$. Up to isotopy, this is equivalent to the projection to the second factor being a Morse function all of whose critical points have index~1.

The following proposition provides the proof of \cref{thm:character}.
\begin{prop}\label{prop:3Dcharofga}
For all links $L$, the following holds.
\begin{enumerate}

\item[(i)] Let $F$ be a %
Seifert surface of $L$. Let $K\subset F$ be a simple closed
curve such that $F\setminus K = F_1\sqcup F_2$ with $\partial F_2 = L\sqcup K$
and $\partial F_1 = K$, and $\Alex{K}$ equals 1. Then $\ga(L)$ is the minimal
genus of such a surface $F_2$.
\item[(ii)] The algebraic genus of $L$ equals
the minimum genus among all 3D-cobordisms between $L$ and a knot with Alexander polynomial 1.
\end{enumerate}
\end{prop}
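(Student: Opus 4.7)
The plan is to establish part (i) directly and then derive part (ii) via a geometric capping-off argument. Part (i) breaks into an upper bound $\ga(L) \leq g(F_2)$ for any configuration satisfying the hypotheses, together with the existence of a configuration realizing $\ga(L)$.

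For the upper bound, I would fix $F, K, F_1, F_2$ as in (i) and consider the Seifert form $\theta$ on $H_1(F;\Z)$. The key observation is that, since $K$ is separating, the inclusion induces a split injection $H_1(F_1;\Z) \hookrightarrow H_1(F;\Z)$ whose image is a summand, and the restriction of $\theta$ to this summand agrees with the Seifert form of $F_1$ viewed as a Seifert surface for $K$; since $\Alex{K}=1$, this makes $H_1(F_1;\Z)$ an Alexander-trivial summand of rank $2g(F_1)$. A standard Euler-characteristic computation ($\chi(F) = \chi(F_1) + \chi(F_2)$ since $\chi(K)=0$) yields $g(F) = g(F_1) + g(F_2)$, so $\widetilde{\ga}(\theta) \leq g(F_2)$, and \cref{eq:gagaga} gives $\ga(L) \leq g(F_2)$.

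For the existence direction of (i), I would combine \cref{prop:gaS-equivalenceclass} with the fact---used in its proof---that any form S-equivalent to a Seifert form of $L$ arises as the Seifert form of some Seifert surface of $L$ (via geometric stabilization). This yields a Seifert surface $F$ whose Seifert form $\theta$ satisfies $\widetilde{\ga}(\theta) = \ga(L)$. I then apply \cref{prop:sepcurve} to an Alexander-trivial subgroup of maximal rank $2d = 2(g(F) - \ga(L))$, obtaining a separating simple closed curve $K$ with $\Alex{K} = 1$ and a piece $F_1$ of $F \setminus K$ of genus $d$ bounded by $K$; the complementary piece $F_2$ then has genus $g(F) - d = \ga(L)$, as required.

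Part (ii) follows from (i) in one direction because any $F_2$ as in (i) is, by definition of a 3D-cobordism, a 3D-cobordism between $L$ and a knot with Alexander polynomial $1$. For the reverse inequality, I would show that every 3D-cobordism $C$ between $L$ and a knot $K'$ with $\Alex{K'} = 1$ occurs as $F_2$ inside some Seifert surface for $L$: starting from any Seifert surface $\Sigma_0$ for $K'$ in $S^3$, arrange $\Sigma_0$ transverse to $C$ and on the opposite side of $K'$, then remove the finitely many interior intersection circles by iterated ambient surgery of $\Sigma_0$ along innermost disks (retaining the component carrying $K'$). The resulting connected $\Sigma$ is still a Seifert surface for $K'$ and meets $C$ only along $K'$, so $F = C \cup \Sigma$ exhibits the hypotheses of (i) with $F_2 = C$, and (i) gives $\ga(L) \leq g(C)$. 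The main obstacle will be justifying this capping-off step carefully---ensuring that the surgeries can be chosen to preserve connectedness of the piece bounded by $K'$; every other ingredient is a direct invocation of \cref{prop:sepcurve} and \cref{prop:gaS-equivalenceclass}.
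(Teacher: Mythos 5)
Your part (i) and your reduction of (ii) to a gluing statement are essentially the paper's own argument: (i) is deduced from \cref{prop:sepcurve} exactly as you describe, and (ii) is reduced to the claim that a 3D-cobordism to an Alexander polynomial~$1$ knot $K'$ can be capped off by a Seifert surface of $K'$ meeting it only in $K'$. The gap is in that capping-off step. You propose to keep the cobordism $C$ fixed and only modify the Seifert surface $\Sigma_0$ of $K'$, removing ``interior intersection circles'' by innermost-disk surgery. But $\Sigma_0\cap C$ is not, in general, a union of circles, and it cannot be made so while $C$ is held fixed. There are two related obstructions. First, the framing that $C$ induces on $K'$ is $\mathrm{lk}(K',L)$, whereas every Seifert surface of $K'$ induces the $0$--framing; when $\mathrm{lk}(K',L)\neq 0$ the two surfaces cannot even be made disjoint in a neighbourhood of $K'$, so the union $C\cup\Sigma$ is never embedded. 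Second, any Seifert surface of $K'$ meets each component $L_i$ of $L$ in at least $|\mathrm{lk}(K',L_i)|$ points, and each such point is the endpoint of an arc of $\Sigma_0\cap C$ running into the interiors of both surfaces; such arcs are invisible to innermost-circle surgery and cannot be removed at all. The Hopf band, viewed as a genus~$0$ 3D-cobordism between two unknots, already exhibits both failures: no disk bounded by one boundary component is disjoint from the interior of the band. So the real difficulty is not the connectedness issue you flag, but that for $\mathrm{lk}(K',L)\neq0$ the configuration you aim for, with $C$ unchanged, does not exist.

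The paper's \cref{lemma:separatingCobfromSeifertsurface} circumvents this by modifying the cobordism itself, which is legitimate because only its genus and the isotopy classes of $L$ and $K'$ \emph{separately} need to be preserved. Concretely: full twists are first inserted along arcs from $K'$ to $L$ to correct the induced framing; the cobordism is then written as a neighbourhood of $K'$ with $1$--handles attached, and the local move of \cref{fig:modifyF''} alters the cobordism (changing the isotopy class of $L\cup K'$ but not of $L$ or $K'$) until each handle core has algebraic intersection number $0$ with the Seifert surface; finally the Seifert surface is stabilized by tubing along subarcs of the cores to kill the remaining geometric intersections. Some version of these modifications of $C$ (and the stabilization of $\Sigma_0$, rather than compression of it) has to be built into your argument before the reverse inequality in (ii) is established.
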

 These should be viewed in light of similar characterizations for the algebraic unknotting number $\ua$ of a knot $K$, which can be defined purely algebraically using the Seifert form, but is most quickly defined as the smallest number of crossing changes needed to turn $K$ into an Alexander polynomial 1 knot.
\begin{proof}
We note that the first statement of \cref{prop:3Dcharofga} is an immediate consequence of \cref{prop:sepcurve}.
Indeed, let $h$ be the smallest genus among surfaces $F_2\subset F_1\sqcup F_2=F\setminus K$ as in (i).
By the definition of $\ga(L)$, there exists a Seifert surface $F$ of some genus $g$ such that
$\ga(L)=g-d$, where $2d$ is the rank of an Alexander-trivial subgroup in $H_1(F;\Z)$; thus, by \cref{prop:sepcurve}, $h\leq \ga(L)$. On the other hand, for any surface $F_1$ as in (i), $H_1(F_1;\Z)\subset H_1(F;\Z)$ is an Alexander-trivial subgroup with respect to the Seifert form on the Seifert surface $F$, thus $h\geq \ga(L)$.

By (i), the second statement of \cref{prop:3Dcharofga} follows, if we establish the following:
given a 3D-cobordism between $L$ and a knot $K$ with Alexander polynomial 1 of some genus $g$, there exists a Seifert surface $F_1$ for $K$ and a 3D-cobordism $F_2$ between $L$ and a knot $K$ of genus $g$ such that $F_1$ and $F_2$ precisely intersect in $K$. This is established in the following Lemma.\end{proof}

\begin{lemma}\label{lemma:separatingCobfromSeifertsurface}
Let two links $L$ and $K$, a Seifert surface $C$ for $K$, and a genus $g$ 3D-cobordism $F'$ between $L$ and $K$ be given. Then $C$ can be stabilized to a Seifert surface $C'$ such that there exists a 3D-cobordism $F$ with genus $g$ between $L$ and $K$ with $C'\cap F=K$.
\end{lemma}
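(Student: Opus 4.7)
My plan is to proceed by induction on the number $n$ of components of the intersection $(C \cap F') \setminus K$, after first perturbing $C$ and $F'$ to meet transversely away from $K$. Generically this intersection consists of a disjoint union of embedded simple closed curves $\alpha_1, \dots, \alpha_n$ (any arcs with endpoints on $K$ can be removed by small boundary-parallel isotopies). The base case $n = 0$ is immediate with $C' = C$ and $F = F'$. For the inductive step, I would find an intersection circle $\alpha$ bounding an innermost disk on either $C$ or $F'$, meaning the interior of the disk meets no other $\alpha_j$.

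If the innermost disk $D$ lies on $C$ with interior disjoint from $F'$, I would leave $C$ unchanged and perform surgery on $F'$ along $\alpha$ using $D$: cut $F'$ along $\alpha$ and cap the two new boundary circles with parallel copies of $D$ pushed slightly to either side of $C$. The resulting $F''$ has one fewer intersection circle with $C$, but possibly different genus or connectivity. I would then restore a connected cobordism of genus exactly $g$ by compensating modifications carried out in the connected complement $S^3 \setminus C$: a single stabilization if $\alpha$ was non-separating on $F'$; tubing the two resulting components back together if $\alpha$ was separating with $L$ and $K$ on opposite sides (which preserves the total genus by Euler-characteristic accounting); or discarding a closed component and stabilizing in the remaining separating sub-case. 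None of these moves alters $C$.

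The harder case is when every intersection circle is essential on $C$, so the innermost disk $D$ lies on $F'$ with $D \cap C = \alpha$. A direct surgery of $C$ using $D$ would destabilize $C$, which is not permitted. Instead my plan is to first stabilize $C$ by attaching a tube along an arc $\gamma \subset S^3 \setminus C$ with endpoints on $C$ close to $\alpha$, chosen so that $\gamma$ crosses $D$ transversely once; this yields a stabilized surface $C_1$ together with a new, meridional intersection circle $\beta$ on the attached tube. The disk $D$ minus a small neighborhood of its puncture is an annulus $D'$ in $F'$ with boundary $\alpha \cup \beta$, and together with a suitable annulus on $C_1$ (consisting of a short strip along $\alpha$ in $C$ and half of the attached tube) one obtains an embedded $2$-sphere in $S^3$. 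By the Schoenflies theorem this sphere bounds a $3$-ball, and an ambient isotopy of $F'$ pushing $D'$ across this ball removes both $\alpha$ and $\beta$ from the intersection without changing the genus of $F'$, since the isotopy fixes the boundary.

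The main obstacle will be making the hard case rigorous: the stabilization arc $\gamma$ must be chosen carefully so that the resulting $2$-sphere bounds an embedded $3$-ball disjoint from the remaining intersection circles, and so that the supporting isotopy is compactly supported and introduces no new intersections elsewhere. These conditions can be arranged by working within a tubular neighborhood of $D$ in $S^3$, where the geometric configuration is locally controlled. After finitely many applications of the inductive step, the intersection has been reduced to $K$, and the accumulated modifications of $C$ yield a stabilization $C'$ of $C$ while the accumulated modifications of $F'$ yield the required genus-$g$ cobordism $F$.
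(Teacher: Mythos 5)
Your induction scheme has two structural problems before one even reaches the details. First, the innermost-disk dichotomy on which the whole induction rests is not available here: a circle of $C\cap F'$ may be essential on \emph{both} $C$ and $F'$ (both surfaces can have large genus, and nothing forces any intersection circle to bound a disk on either one), in which case there is no innermost disk on either surface and the inductive step cannot begin. Second, in your ``hard case'' the closed surface you build is not a sphere: the union of the annulus $D'\subset F'$ and an annulus in $C_1$, glued along the two circles $\alpha\cup\beta$, has Euler characteristic $0$ and is a torus, so the Schoenflies theorem does not apply and there is no $3$--ball to push $D'$ across; this step is the crux of your argument and it fails as stated. A further overlooked point is the framing: if the framings induced on $K$ by $C$ and by $F'$ disagree, the two surfaces are forced to intersect in arcs with endpoints on $K$ that no boundary-parallel isotopy can remove (the signed count of such endpoints on each component of $K$ is the framing difference), so your parenthetical dismissal of intersection arcs is false in general.

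The paper's proof avoids all of this by never looking at intersection circles. It first twists $F'$ along arcs running from $K$ to $L$ so that its framing of $K$ matches the one induced by $C$; it then writes the corrected cobordism as a regular neighborhood of $K$ union $1$--handles $H_1,\dots,H_k$ and reduces everything to the intersections of the handle cores $I_i$ (arcs) with $C$. The key step your proposal is missing is that one must first modify $F'$ itself --- by a local move that changes the link $L\cup K$ but neither $L$ nor $K$ nor the genus --- so that each core has \emph{algebraic} intersection number $0$ with $C$; only then can the remaining geometric intersections be cancelled in adjacent pairs of opposite sign by tubing $C$ along subarcs of the cores, which is precisely a stabilization of $C$. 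Without some analogue of this algebraic-cancellation step there is no reason the intersections of $F'$ with $C$ can be removed by stabilizing $C$ alone, and your proposal contains no substitute for it.
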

\begin{proof}
The surface $F'$ defines a framing $N_{F'}(K)$ of $K$; i.e.~a disjoint union of embedded annuli, given as a small closed neighborhood of $K$ in $F'$. We first modify $F'$ such that that the induced framing on $K$ agrees with the framing induced by $C$: for every component $K_j$ of $K$ take a properly embedded interval $J_j$ in $F'$ with one boundary point on $K_j$ and the other on $L$, which is possible since $F'$ is connected (by the definition of a Seifert surface).
By inserting full twists along $J_j$ into $F'$ if necessary, we get a new genus $g$ 3D-cobordism $F''$ that induces the correct framing on $K$, which we denote by $N_{F''}(K)$.

Next, we observe that the cobordism $F''$ can be viewed as  arising by adding 1--handles $H_1,\ldots, H_k$ to $N_{F''}(K)$.
More precisely, the following is true.
Let $K'$ be $(\partial N_{F''}(K))\setminus K$; in other words, $K'$ is the parallel copy of $K$ that forms the other part of the boundary of $N_{F''}(K)$. For some non-negative integer $k$, there exist
pairwise disjoint disks $H_1,\ldots, H_k$ in $S^3$ such that
\[F''=N_{F''}(K)\cup H_1\cup\cdots\cup H_k,\] where the $H_i$ are pairwise disjoint disks in $S^3$ such that $H_i\cap N_{F''}(K)$ consists of two closed intervals contained in $K'$. Let $I_i$ denote the core of the handle $H_i$; i.e.~a properly embedded interval in $H_i$ such that its two boundary points lie in the interior of $K'\cap H_i$, one in each component.

Now, we study the intersection between $F''$ and $C$. Since $F''$ and $C$ induce the same framing on $K$, we may isotope them such that $N_{F''}(K)\cap C=K$. We also arrange that the cores $I_i$ intersect $C$ transversely.

\begin{figure}[h]
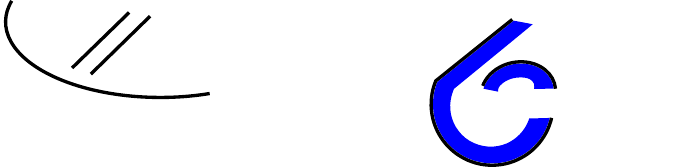
\caption{The Seifert surface $F''$ viewed as the union of the framing $N_{F''}(K)$ (gray) and the 1--handles $H_1,\ldots, H_k$ (blue). The depicted local move (left-to-right) changes the algebraic intersection number between the core of $H_i$ and $C$ by $\pm1$ without changing the genus of $F''$ or the isotopy class of $K$ and $L$.}
\label{fig:modifyF''}
\end{figure}
We now modify $F''$ such that the algebraic intersection number between $I_i$ and $C$ becomes $0$. Indeed, by modifying $F''$ as depicted in \cref{fig:modifyF''},
we change the algebraic intersection number between $I_i$ and $C$ by $\pm 1$. Thus, by modifying $F''$ several times as described in \cref{fig:modifyF''},
we obtain a genus $g$ 3D-cobordism $F$  %
such that the corresponding cores $I_i$ have algebraic intersection number $0$ with $C$.
We note that $F$ is still a genus $g$ 3D-cobordism between $K$ and $L$ since the operation described in \cref{fig:modifyF''} does not change the isotopy type of $L$ or $K$ (however, in general, it does change the isotopy type of $L\cup K$).

In a last step, we show that $C$ can be stabilized such that it no longer intersects $F$. This is done by inductively doing stabilizations on $C$ to reduce the geometric intersection between the cores $I_i$ and $C$ to $0$. Indeed, if $C\cap I_i$ is non-empty, then we find two consecutive (on $I_i$) intersection points $x,y\in C\cap I_i$ of opposite orientation. The subinterval of $I_i$ connecting $x$ and $y$ defines a stabilization of $C$ that intersects $I_i$ in two fewer points than $C$. Inductively, we find a stabilization $C'$ of $C$ which does not intersect any $I_i$ and so, it can be isotoped (rel $K$) away from $F$ except for the intersection at $K$.
\end{proof}

\section{The $\Z$--slice genus}\label{sec:st}
In this section, we establish \cref{thm:gts<=ga}, which states that $\gst(L)\leq\ga(L)$ for all links $L$.
We recall from the introduction:
\begin{definition}
Let the \emph{$\Z$--slice genus} $\gst(L)$ of a link $L$ denote the smallest genus of an oriented connected properly embedded locally flat surface $F$ in the 4--ball $B^4$ with boundary $L\subset S^3$ and $\pi_1(B^4\setminus F)\cong \Z$.
\end{definition}
\begin{proof}[Proof of \cref{thm:gts<=ga}]
Given an $r$--component link $L$, let $F$ be a Seifert surface such that $\ga(L)=g-d$, where $g$ denotes the genus of $F$ and
$2d$ is the rank of an Alexander-trivial subgroup $V$ of $H_1(F;\Z)$.
By \cref{prop:sepcurve}, there exists a separating curve $K$ with Alexander polynomial 1 on $F$ such that $F$ can be written as the following union of surfaces:
\[F=C\cup H_1\cup\cdots\cup H_{2(g-d)+(r-1)};\] where $C$ is a Seifert surface for $K$ of genus $d$ and the $H_i$ are closed disks that are pairwise disjoint and each disk intersects $C$ in two closed intervals that lie in $K=\partial C$. In other words, $F$ is given by attaching $2(g-d)+(r-1)$ many 1--handles to $C$; compare \cref{fig:F}. Compare also with the proof of \cref{lemma:separatingCobfromSeifertsurface}, where we started with a similar setup.
\begin{figure}[h]
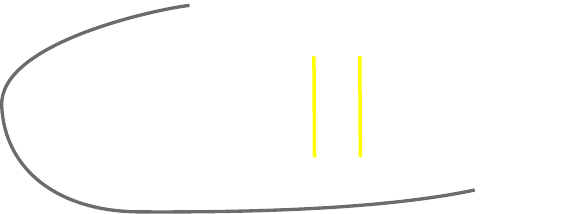
\caption{The Seifert surface $F$ for the link $L$ (yellow) given by attaching 2--dimensional 1--handles $H_i$ (blue) to the Seifert surface $C$ (gray) for the knot $K$ (black). In the case depicted, we have $g=2=
d$ and $r=2$.}
\label{fig:F}
\end{figure}

By \cref{eq:Alex1<=>boundsstddisk}, $K$ bounds a properly embedded locally flat disk $D$ in $B^4$ such that its complement has fundamental group $\Z$.
We may arrange that the disk $D$ meets $S^3$ transversely and is smooth close to $S^3$.

Let $S$ be the following locally flat surface of genus $\ga(L)$ in the $4$--ball $B_2^4$ of radius $2$:
\[S=D\cup H_1\cup\cdots\cup H_{2(g-d)+(r-1)}\cup\Biggl(\,\bigcup_{t\in[1,2]}L_t\Biggr),\]
where $L_t$ denotes the link in the 3--sphere of radius $t$ obtained by stretching $L$ by $t$. %
In particular, $S$ is a witness for $\gtop(L)\leq \ga(L)$; i.e.~we have established \cref{eq:gt<=ga}.
To get the stronger statement $\gst(L)\leq \ga(L)$, it suffices to establish the following claim.

\begin{claim}
The fundamental group of $B_2^4\setminus S$ is isomorphic to $\Z$.
\end{claim}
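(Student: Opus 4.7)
My plan is to reduce the computation to the hypothesis $\pi_1(B^4\setminus D)\cong\Z$ via Van Kampen's theorem and careful deformation retracts. The guiding principle is that $S$ differs from $D$ only by the $2$-disks $H_i\subset\partial B^4$ and the cylindrical extension $\bigcup_{t\in[1,2]}L_t\subset S^3\times[1,2]$, both of which are boundary-type features relative to $B^4$ and should not alter the fundamental group of the complement.

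First, I would peel off the outer collar. Cover $B_2^4\setminus S$ by the open sets $U=\{|p|<1+\varepsilon\}\cap(B_2^4\setminus S)$ and $V=\{|p|>1\}\cap(B_2^4\setminus S)$. Since the only part of $S$ meeting the outer region is the cylindrical piece, both $V$ and the overlap $U\cap V=(S^3\times(1,1+\varepsilon))\setminus(L\times(1,1+\varepsilon))$ deformation retract onto $S^3\setminus L$ via the straight-line homotopy in the radial coordinate. The inclusion $U\cap V\hookrightarrow V$ is therefore a homotopy equivalence, and Van Kampen's theorem yields $\pi_1(B_2^4\setminus S)\cong\pi_1(U)$.

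Next I would identify $\pi_1(U)$ with $\pi_1(B^4\setminus D)$. The crucial set-theoretic fact is the inclusion $L\subset D\cup\bigcup_i H_i$: every point of $L=\partial F$ lies on $K=\partial D$ (the portion of $L$ coming from $C$) or on some $\partial H_i\setminus K$ (the portion coming from the bands). Choose a collar $c\colon\partial B^4\times[0,\delta)\hookrightarrow B^4$ compatible with $D$, meaning $D\cap\mathrm{im}(c)=c(K\times[0,\delta))$. I would construct a deformation retract $\rho\colon U\to B^4\setminus D$ by pushing both the outer ring $\{1\le|p|<1+\varepsilon\}$ and the inner boundary-collar $c(\partial B^4\times[0,\delta/2))$ to the deeper $3$-sphere $c(\partial B^4\times\{\delta/2\})$, combined with a small tangential perturbation on $\partial B^4$ supported in a neighborhood of $K$ that shifts each point off $K$ and away from the adjacent $H_i$-interior. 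The image has depth at least $\delta/2$ inside $B^4$ (so it misses $\bigcup_i H_i\subset\partial B^4$) and is tangentially displaced off the scaled copy of $K$ at that depth (so it misses $D$). A straight-line homotopy to the identity stays within $U$, so $\rho$ is a deformation retract. Freedman's theorem gives $\pi_1(B^4\setminus D)\cong\Z$, and combining with the first step yields $\pi_1(B_2^4\setminus S)\cong\Z$.

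The main technical obstacle is the construction of the tangential perturbation: above the shared arcs $K\setminus L\subset\bigcup_i H_i$, where $S$ has a corner along which $D$ meets $H_i$, a purely radial push would land either in $H_i$ (if stopping at $\partial B^4$) or in $D$ (if continuing to depth $\delta/2$, where $D$ contains a scaled copy of $K$). The tangential correction resolves this, and such a correction exists because $K$ has codimension $2$ in $\partial B^4$, leaving room for a smoothly prescribed normal push-off supported in a small tubular neighborhood of $K$. One must check carefully that the combined radial-and-tangential homotopy remains within $U$ throughout, which relies on the fact that $S$ itself has the product structure $c(K\times[0,\delta))$ near $\partial D$.
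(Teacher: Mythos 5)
Your first step---peeling off the outer collar $S^3\times(1,2]$ with van Kampen---is fine, but the second step contains a fatal error: the deformation retraction $\rho\colon U\to B^4\setminus D$ you describe does not exist, and in fact $U$ is not even homotopy equivalent to $B^4\setminus D$. The real obstruction is not the corner of $S$ along $K$, which you single out as the main technical obstacle, but the interiors of the handles $H_i$. A point of $U$ at radius $1+\eta$ lying radially over an interior point of some $H_i$ cannot be pushed past radius $1$ by your radial straight-line homotopy without crossing $H_i$, and your tangential correction is supported only near $K$, so it does not help there. No repair is possible: each $H_i$ is a $2$--disk sitting on the radius--$1$ sphere, hence in the \emph{interior} of $B_2^4$, so $B_2^4\setminus N(S)$ is obtained from $B^4\setminus N(D)$ by attaching one $4$--dimensional $2$--handle for each $H_i$, along a curve $\alpha_i$ that runs once around $H_i$ near its core. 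These attaching curves turn out to be null-homotopic in $B^4\setminus N(D)$, so up to homotopy each handle attachment wedges on a $2$--sphere; consequently $H_2(B_2^4\setminus S)\cong\Z^{2(g-d)+r-1}$, whereas $H_2(B^4\setminus N(D))=0$ because the exterior of a slice disk is a homology $S^1$. Two spaces with different $H_2$ admit no homotopy equivalence, let alone a deformation retraction.

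The true statement $\pi_1(B_2^4\setminus S)\cong\Z$ therefore needs an argument that your proposal does not contain. By Seifert--van Kampen, $\pi_1(B_2^4\setminus N(S))$ is the quotient of $\pi_1(B^4\setminus N(D))$ by the normal closure of the classes $[\alpha_i]$, and one must show that these classes are already trivial. This holds because each $\alpha_i$ is homologous in $S^3\setminus K$ to a meridian of $K$ together with an oppositely oriented meridian (one for each end of the band $H_i$), so it is null-homologous in $B^4\setminus N(D)$; and since $\pi_1(B^4\setminus N(D))\cong\Z$ is abelian, null-homologous implies null-homotopic. This is precisely where the hypothesis $\pi_1(B^4\setminus D)\cong\Z$ does its work, and that mechanism is absent from your argument.
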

Briefly said, it turns out that the inclusion $B^4\setminus D\to B_2^4\setminus S$ induces a surjection on $\pi_1$, which implies the claim since $H_1(B_2^4\setminus S;\Z)\cong\Z$ by an appropriate version of Alexander duality. We provide a more detailed argument.

For this, we consider the topological 4--manifold with boundary $B_2^4\setminus N(S)$, where $N(S)$ denotes an open tubular neighborhood of $S$, rather than $B_2^4\setminus S$. The main point is that $B_2^4\setminus N(S)$ (as a topological manifold with boundary) can be obtained
from $B^4\setminus N(D)$ by attaching $2(g-d)+r-1$ many 2--handles: one 4--dimensional 2--handle $\widetilde{H_i}$ corresponding to each $H_i$; compare~\cite[Proposition~6.2.1]{GompfStipsicz}.
In \cref{fig:attachsphereslowdim},
\begin{figure}[t]
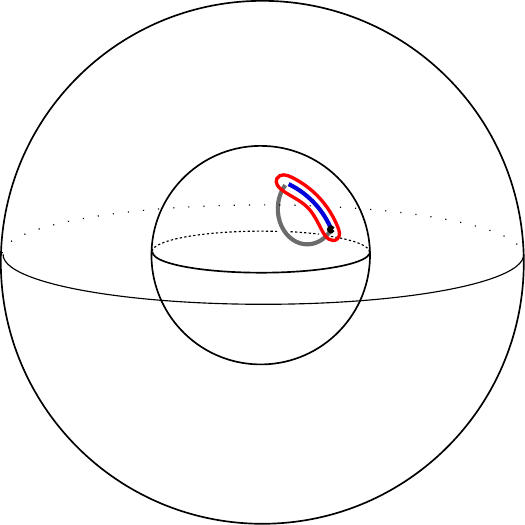
\caption{A knot $S$ in the $3$--ball $B^3_2$ of radius $2$ is given by attaching a 1--handle $H_i\subset S^2$ (blue) to the interval $D\subset B^3$ (gray) along two points (a $0$--dimensional attaching sphere) (black). The knot complement $B^3_2\setminus N(S)$ of $S$ can be obtained by attaching a $3$--dimensional $2$--handle to the solid torus $B^3\setminus N(D)$. The attaching sphere for this $2$--handle is the curve $\alpha\subset S^2$ (red). }
\label{fig:attachsphereslowdim}
\end{figure}
 we illustrate the situation one dimension lower: for a knot in the $3$--ball rather than a surface in the $4$--ball.

We describe the attaching spheres for the handles $\widetilde{H_i}$ in more detail.
Let $I_i$ be a core of the handle $H_i$; i.e.~a properly embedded interval in $H_i$ such that its two boundary points lie in the interior of $C\cap H_i\subset K$, one in each component; compare \cref{fig:F}.
Choose closed disks $D_i$ in $S^3$ such that each $D_i$ intersects $\overline{F\setminus C}=\bigcup_{j=1}^{2(g-d)+r-1}H_i$ only in the interior of $D_i$, and such that the intersection is $I_i$. Let $\alpha_i$ be the boundary curve of $D_i$; i.e.~a curve that wraps once `around' $H_i$ while staying close to $I_i$; see \cref{fig:F}. We leave it to the reader to check that, indeed, $B_2^4\setminus N(S)$ (as a topological manifold with boundary) is obtained
from $B^4\setminus N(D)$ by attaching $2(g-d)+r-1$ many $2$--handles, one along each $\alpha_i$; compare~\cite[Proof of Proposition~6.2.1]{GompfStipsicz}.

In particular, we have that $B_2^4\setminus N(S)$ deformation retracts to the topological space $X$ obtained by gluing $2(g-d)+r-1$ many disks along $\alpha_i$ to $B^4\setminus N(D)$. Therefore, we have
\[
\pi_1(B_2^4\setminus N(S))
\cong \pi_1(X)\cong \frac{\pi_1(B^4\setminus N(D))}{\langle[\alpha_1],\ldots,[\alpha_{2(g-d)+(r-1)}]\rangle}
\]
by the Seifert-van Kampen Theorem, where $[\alpha_i]$ denotes the homotopy class of $\alpha_i$ in $\pi_1(B^4\setminus N(D))$ (a base point may be appropriately chosen). However, note that $\alpha_i$ is null-homologous in $S^3\setminus K$, i.e.~the algebraic linking number of $\alpha_i$ and $K$ is zero; since $\alpha_i$ is clearly homologous to a meridian of $K$ plus an oppositely oriented meridian of $K$. In particular, the $\alpha_i$ are also null-homologous in $B^4\setminus N(D)$. Since $\pi_1(B^4\setminus D)\cong \Z$, we have $\pi_1(B^4\setminus N(D))\cong H_1(B^4\setminus N(D); \Z)$, and so
the $\alpha_i$ are also null-homotopic in $B^4\setminus N(D)$. With this we conclude
\[
\pushQED{\qed}
\pi_1(B_2^4\setminus N(S))
\cong \frac{\pi_1(B^4\setminus N(D))}{\langle[\alpha_1],\ldots,[\alpha_{2(g-d)+(r-1)}]\rangle}
\cong \pi_1(B^4\setminus N(D))\cong\Z.
\qedhere
\popQED
\]
\renewcommand{\qedsymbol}{}
\end{proof}

\section{Algebraic genus and algebraic unknotting number}\label{sec:ua}
In this section, we relate the algebraic genus and the algebraic unknotting number of knots as follows:
\uathm*
We consider knots rather than all links since, a priori, the invariant $\ua$ is only a knot invariant (rather than an invariant of links) and, for now, we do not know of a generalization of $\ua$ to links such that \cref{thm:galequaleq2ga} holds.

We prove the two inequalities of \cref{thm:galequaleq2ga} using different interpretations of $\ua$.
Using that $\ua(K)$ is equal to the minimum number of crossing changes needed to transform $K$ into a knot with Alexander polynomial $1$ \cite{saeki,fogel},
the first inequality $\ga\leq\ua$ is an immediate consequence of the following proposition (which might be of independent interest):
\begin{prop}
Let $L_1, L_2$ be two links related by a crossing change. Then $|\ga(L_1) - \ga(L_2)| \leq 1$.
\end{prop}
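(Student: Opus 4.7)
My strategy is to use the three-dimensional characterisation of the algebraic genus furnished by \cref{thm:character}: $\ga(L)$ equals the minimum genus of a 3D-cobordism between $L$ and a knot with Alexander polynomial~$1$. With this characterisation in hand, the proposition reduces to the following geometric claim.

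\emph{Key Lemma.} If $L_1$ and $L_2$ are related by a crossing change, then they cobound a 3D-cobordism in $S^3$ of genus at most~$1$.

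Granting the Key Lemma, the proposition follows by a triangle-inequality argument on 3D-cobordism distance. Let $\Sigma_1$ realise $\ga(L_1)$---a minimal-genus 3D-cobordism from $L_1$ to a knot $K$ with $\Alex{K} = 1$---and let $C$ be the 3D-cobordism from $L_2$ to $L_1$ of genus at most~$1$ given by the Key Lemma. After an ambient isotopy in $S^3$ that identifies the two copies of $L_1$ and places $L_2 \subset C$ disjointly from $K \subset \Sigma_1$, gluing $C$ to $\Sigma_1$ along $L_1$ produces a connected oriented embedded surface in $S^3$ with boundary $L_2 \sqcup K^-$ of genus at most $\ga(L_1) + 1$. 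By \cref{thm:character}, $\ga(L_2) \leq \ga(L_1) + 1$, and the reverse inequality follows by symmetry.

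To establish the Key Lemma, I would exploit the fact that a crossing change admits a realisation in $S^3 \times [0,1]$ by two saddle moves---two $1$-handle attachments---yielding a cobordism consisting purely of $1$-handles. The underlying principle is that such a cobordism can be arranged to lie in a single $S^3$-slice, i.e., realised by attaching two bands to $L_1$ inside $S^3$. Concretely, I would localise the crossing change in a small $3$-ball $B \subset S^3$, take $L_2$ to agree with a framed pushoff of $L_1$ outside $B$ and to carry the opposite crossing inside $B$, and assemble the 3D-cobordism from pushoff strips outside $B$ (rectangles along the arcs of $L_1 \setminus B$, together with product annuli for components of $L_1$ disjoint from $B$) glued along four connecting arcs on $\partial B$ to a local surface inside $B$ whose boundary consists of the four arcs of $(L_1 \cup L_2) \cap B$ and the four connecting arcs. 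An Euler characteristic count, combined with tubing together disconnected components via $1$-handles in $S^3$ (an operation preserving total genus), yields the desired bound.

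\emph{Main obstacle.} The principal difficulty is the Key Lemma itself, and more specifically the construction of the local surface inside $B$ together with the verification that the resulting 3D-cobordism is embedded in $S^3$. The two closed curves bounding the local surface always have linking number zero in $B$, but their precise link type depends on the pushoff framing, so one must argue---either by a careful framing choice or by a direct global argument showing that the two saddles realising a crossing change can always be arranged simultaneously (i.e., along disjoint bands) in a single copy of $S^3$---that the local surface can be chosen of genus at most zero, so that the total cobordism has genus at most~$1$.
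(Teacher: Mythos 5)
Your strategy---a triangle inequality for 3D-cobordism genus plus a ``crossing changes give genus-$\leq 1$ cobordisms'' lemma---is plausible in outline, but it has a genuine gap, and it is not the one you flag as the main obstacle. The step ``gluing $C$ to $\Sigma_1$ along $L_1$ produces a connected oriented embedded surface in $S^3$'' does not follow from an ambient isotopy arranging the \emph{boundary} components: two embedded surfaces in $S^3$ sharing the boundary link $L_1$ will in general intersect in their \emph{interiors} (think of two Seifert surfaces of the same knot), so the triangle inequality for 3D-cobordism genus is not free. Removing such intersections is exactly what \cref{lemma:separatingCobfromSeifertsurface} is for, and there it is achieved only by stabilizing one of the two surfaces, i.e.\ increasing its genus. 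In the proof of \cref{prop:3Dcharofga} this is harmless because the stabilized piece is a Seifert surface of an Alexander-polynomial-$1$ knot, so the added genus lands in the Alexander-trivial subgroup (cf.\ \cref{lemma:stabdecreasesga}); in your composition neither $C$ nor $\Sigma_1$ has that property, so stabilizing either one destroys the bound $\ga(L_1)+1$. The repair is to glue $C$ not to the cobordism $\Sigma_1$ but to a full Seifert surface $F=F_1\cup_K F_2$ of $L_1$ as in \cref{prop:3Dcharofga}(i), apply \cref{lemma:separatingCobfromSeifertsurface} to make a stabilization of $F$ disjoint from a suitably modified $C$, and then bound $\widetilde{\ga}$ of the Seifert form of the resulting Seifert surface of $L_2$. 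That works, but it is a different argument from the one you wrote, and it is where the real content of your approach would lie. Your Key Lemma is, as far as I can tell, true (and sharp: a genus-$0$ 3D-cobordism between a knot and the unknot would be an annulus), but as you concede you have not proved it; the clasp between the two local bands at the changed crossing has its double arc ending on the link itself, so it cannot simply be tubed away, and making the local piece embedded of the right genus requires a careful construction.

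For comparison, the paper's proof avoids all of this geometry and is a short Seifert-matrix computation: Seifert's algorithm applied to diagrams of $L_1$ and $L_2$ differing at one crossing yields Seifert matrices with $M_2=M_1\pm e_{11}$; after stabilizing so that $\ga(L_1)=\widetilde{\ga}(M_1)$, one further stabilization of $M_2$ followed by an explicit base change produces a matrix containing $M_1$ as a block, so an Alexander-trivial subgroup of rank $2g-2\ga(L_1)$ for $M_1$ persists and gives $\ga(L_2)\leq g+1-(g-\ga(L_1))=\ga(L_1)+1$ directly. If you want to keep the 3D-cobordism point of view, you must either prove the composition statement via \cref{lemma:separatingCobfromSeifertsurface} as sketched above or switch to the algebraic route.
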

\begin{proof}
Applying Seifert's algorithm to diagrams of $L_1$ and $L_2$ that differ by one crossing change, one finds two Seifert surfaces, say of genus $g$. A good choice of basis for the first homology of these Seifert surfaces yields $2g\times 2g$
Seifert matrices $M_i$ for $L_i$ such that $M_2=M_1\pm e_{11}$, where $e_{11}$ denotes the square $2g\times 2g$ matrix with top-left entry $1$ and all other entries equal to zero.
By stabilizing $M_1$, we may assume that $\ga(L_1)=\widetilde{\ga}(M_1)$; i.e.~the maximal Alexander-trivial subgroup of $\mathbb{Z}^{2g}$ with respect to $M_1$ is of rank $2g-2\ga(L_1)$.
Let us apply the same stabilizations to $M_2$, so that the property $M_2 = M_1 \pm e_{11}$ is retained.

Now consider the following $(2g+2)\times (2g+2)$ matrix obtained as a stabilization of $M_2$:
\begin{equation*}\widetilde{M_2}=\left(
{\begin{tabular}{ccc|cc}&&&$\mp 1$&0\\
&$M_{2}$&&0&0\\
&&&$\vdots$&$\vdots$\\
&&&$0$&$0$\\
\hline
0&$\cdots$&0&0&1%
\\
0&$\cdots$&0&0%
&0\\
\end{tabular}}
\right).\end{equation*}
A change of basis turns $\widetilde{M_2}$ into
\begin{equation*}\left(
{\begin{tabular}{ccc|cc}&&&$\mp 1$&1\\
&$M_2\mp e_{11}$&&0&0\\
&&&$\vdots$&$\vdots$\\
&&&$0$&$0$\\
\hline
0&$\cdots$&0&0&1%
\\
0&$\cdots$&0&0%
&0\\
\end{tabular}}
\right)=\left(
{\begin{tabular}{ccc|cc}&&&$\mp 1$&1\\
&$M_1$&&0&0\\
&&&$\vdots$&$\vdots$\\
&&&$0$&$0$\\
\hline
0&$\cdots$&0&0&1%
\\
0&$\cdots$&0&0%
&0\\
\end{tabular}}
\right);\end{equation*}
indeed, the latter is obtained from $\widetilde{M_2}$ by adding the second-to-last column to the first column and, correspondingly, adding the second-to-last row to the first row.
Since there is an Alexander-trivial subgroup of rank $2g-2\ga(L_1)$ with respect to $M_1$, the same holds for $\widetilde{M_2}$. Consequently,
\[
\ga(L_2)\leq \widetilde{\ga}(\widetilde{M_2}) \leq g+1-(g-\ga(L_1))=\ga(L_1)+1,
\]
where the two inequalities are immediate from the definition of $\ga$.
This gives $\ga(L_2) - \ga(L_1) \leq 1$, and by switching the roles of $L_1$ and $L_2$
also  $\ga(L_1) - \ga(L_2) \leq 1$, which concludes the proof.
\end{proof}
We point out that the stabilizations in the first paragraph in the above proof are necessary as it remains an open question whether $\widetilde{\ga}(\theta)=\widetilde{\ga}(\widetilde{\theta})$ holds for all S-equivalent Seifert forms $\theta$ and $\widetilde{\theta}$.

To tackle the second inequality of \cref{thm:galequaleq2ga}, we use Friedl and Borodzik's knot invariant $n$, which they show to be equal to $\ua$~\cite{BorodzikFriedl_15_TheUnknottingnumberAndClassInv1,BorodzikFriedl_14_OnTheAlgUnknottingNr}.
Let us briefly give the necessary definitions.
Let $\Lambda = \Z[t^{\pm 1}]$ be a ring with involution $a \mapsto \overline{a}$ given by the linear extension of $t \mapsto t^{-1}$,
and let $\Omega$ be its quotient field. For a Hermitian $m\times m$ matrix $A$ over $\Lambda$ that is invertible over $\Omega$, denote by $\lambda(A)$ the Hermitian form
\[
\Lambda^m / {A \Lambda^m} \times
\Lambda^m /{A \Lambda^m} \to \Omega/{\Lambda},\qquad
(a, b) \mapsto \overline{a}^{\top} A^{-1} b.
\]
Suppose $V$ is a $2g\times 2g$ Seifert matrix of a knot $K$ of the following kind:
\begin{equation}\label{eq:1}
\begin{pmatrix}
B & C + \ID \\
C^{\top} & D
\end{pmatrix},\qquad
\begin{matrix}
B, C, D \text{ are } g\times g,\\
B, D \text{ are symmetric}.
\end{matrix}
\end{equation}
Note that any Seifert matrix of $K$ is congruent to one of this kind.
Then the {Blanchfield pairing} is isometric to $\lambda(\widetilde{V})$, where $\widetilde{V}$ is the
following Hermitian matrix over $\Lambda$ (see \cite{Ko} and formulas (2.3) and (2.4) in \cite{BorodzikFriedl_15_TheUnknottingnumberAndClassInv1}):
\[
\begin{pmatrix}
B & -t\ID + (1-t)C \\
-t^{-1}\ID + (1-t^{-1})C^{\top} & x\cdot D
\end{pmatrix},
\]
where we use the shorthand $x = (1 - t) + (1 - t^{-1}) = (1 - t) \cdot (1 - t^{-1})$.
For a knot $K$, $n(K)$ is defined as the minimal size of a Hermitian matrix $A$ over $\Lambda$
such that $\lambda(A)$ is isometric to the Blanchfield pairing of $K$,
and the integral matrix $A(1)$ is congruent to a diagonal matrix.

We will use the following classical result on integral forms; see e.g.~\cite{huse}.
\begin{lemma}\label{thm:1}
An indefinite odd unimodular symmetric integral form can be represented by a diagonal matrix.\qed
\end{lemma}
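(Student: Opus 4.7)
The plan is to prove the classification result that underlies this lemma: any indefinite odd unimodular symmetric integral form $Q$ of rank $n$ and signature $(p,q)$ is isometric to $\langle 1\rangle^{p}\oplus\langle -1\rangle^{q}$, which is in particular diagonal. I would proceed by induction on $n$; the case $n = 1$ is trivial.

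The inductive step consists in splitting off a summand $\langle\pm 1\rangle$, which reduces to the following key claim: such a form represents $\pm 1$ primitively. Given a vector $v$ with $v\cdot v = \pm 1$, unimodularity implies that $\langle v\rangle$ is an orthogonal direct summand---the integral projection onto $v^\perp$ is $x \mapsto x - (v\cdot v)(x\cdot v)\,v$---so $Q \cong \langle\pm 1\rangle \oplus Q'$ with $Q' := v^\perp$ unimodular of rank $n-1$. If $Q'$ remains indefinite and odd, the induction hypothesis applies; the degenerate cases ($Q'$ definite or even) occur only in small rank and can be handled by direct inspection, e.g.\ in rank $2$ one verifies by hand that an indefinite odd unimodular $2\times 2$ form is equivalent to $\langle 1\rangle\oplus\langle -1\rangle$.

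The main obstacle is the primitive representation of $\pm 1$. My strategy would be to first extract a hyperbolic plane $H$ from $Q$ as an orthogonal direct summand: the form $Q\otimes\Q$ is isotropic (Meyer's theorem handles rank $\geq 5$; indefiniteness produces an isotropic vector by inspection in smaller rank), a primitive integral isotropic vector $e \in Q$ then arises by clearing denominators and dividing out the gcd, and a companion $f$ with $e\cdot f = 1$ and $f\cdot f = 0$ is constructed using unimodularity (take an integral dual of $e$ and adjust by a multiple of $e$ to kill its self-pairing). Thus $Q = H\oplus Q_1$ with $H = \langle e,f\rangle$. Since $Q$, and hence $Q_1$, is odd, pick $w\in Q_1$ with $w\cdot w$ odd. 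Then $v = e + kf + w$ satisfies $v\cdot v = 2k + w\cdot w$, and choosing $k\in\Z$ so that $2k + w\cdot w = \pm 1$ (possible since $w\cdot w$ is odd) yields the desired primitive unit vector in $Q$, completing the inductive step.
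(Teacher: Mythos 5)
First, note that the paper does not prove this lemma at all: it is quoted as a classical result on integral forms with a reference to the literature (Milnor--Husemoller; the argument you sketch is essentially Serre's). So your proposal is being judged on its own merits, and most of it is sound: extracting a primitive rational isotropic vector, completing it to a hyperbolic summand $H=\langle e,f\rangle$, and using oddness of $Q_1=H^{\perp}$ to produce $v=e+kf+w$ with $v\cdot v=\pm1$ is exactly the standard route. (Two small soft spots there: for ranks $3$ and $4$, indefiniteness alone does not give rational isotropy --- you need unimodularity and a local computation, so ``by inspection'' is optimistic; and the companion vector $f=u-ke$ can only have its self-pairing killed when $u\cdot u$ is even --- though if $u\cdot u$ is odd you get a vector of square $\pm1$ directly, so this branch is harmless once you notice it.)

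The genuine gap is in the inductive step. Your claim that the degenerate cases ``($Q'$ definite or even) occur only in small rank and can be handled by direct inspection'' is false, and the induction as written breaks. If $Q$ has signature $(1,q)$ and you split off a vector of square $+1$, the complement $Q'$ is negative definite of rank $q$, which can be arbitrarily large; worse, a definite odd unimodular lattice need not be diagonalizable, so no amount of inspection saves you. Concretely, take $Q=\langle 1\rangle\oplus(-E_8)\oplus\langle-1\rangle$, which is odd, indefinite, unimodular of signature $(1,9)$: splitting off the obvious $\langle 1\rangle$ leaves $(-E_8)\oplus\langle-1\rangle$, which is not isometric to $\langle-1\rangle^{9}$. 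The repair is available inside your own construction and should be made explicit: since $v\cdot v=2k+w\cdot w$ with $k$ free, you may choose the \emph{sign} $\epsilon=v\cdot v\in\{\pm1\}$, and you should always choose it so that $v^{\perp}$ remains indefinite (possible whenever $\operatorname{rk}Q\geq 3$, by splitting off the sign of the larger index of inertia; rank $2$ is the base case). Oddness of $v^{\perp}$ also needs an argument rather than dismissal --- an even indefinite complement would require the (harder) even classification --- but it comes for free: $f-\epsilon v\in v^{\perp}$ has square $-\epsilon$, which is odd. With these two observations the induction closes; without them the proof has a hole.
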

Here, an integral form $\theta$ is called even if $\theta(v,v)\in 2\Z$ for all $v$,
and odd otherwise. Note the sum of two even forms is even, and the sum of an even and an odd form is odd.
\begin{lemma}\label{lem:1}
Any matrix of the kind \cref{eq:1} is congruent to another matrix of the kind \cref{eq:1} with $B$ representing an odd form.
\end{lemma}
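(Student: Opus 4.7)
Matrices of the kind \cref{eq:1} are exactly the $2g\times 2g$ integer matrices $V$ satisfying $V-V^\top=J$, where $J$ is the standard symplectic matrix $\bigl(\begin{smallmatrix}0&\ID\\-\ID&0\end{smallmatrix}\bigr)$. Consequently, the congruences $V\mapsto P^\top V P$ that preserve the kind \cref{eq:1} are precisely those with $P\in\mathrm{Sp}(2g,\Z)$. The strategy is to exhibit two explicit symplectic moves that, combined in a short case analysis, suffice to make $B$ odd.

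The first move is parametrised by a symmetric $g\times g$ integer matrix $Y$. The matrix $P_Y=\bigl(\begin{smallmatrix}\ID&0\\Y&\ID\end{smallmatrix}\bigr)$ lies in $\mathrm{Sp}(2g,\Z)$, and a direct block computation (using $Y^\top=Y$) gives
\[
P_Y^\top V P_Y=\begin{pmatrix}B+CY+YC^\top+Y+YDY & C+\ID+YD\\ C^\top+DY & D\end{pmatrix},
\]
which is again of kind \cref{eq:1}. Specialising to $Y=E_{jj}$, the elementary matrix with a single $1$ in position $(j,j)$ and zeros elsewhere, changes only the $(j,j)$-entry of the upper-left block, replacing $B_{jj}$ by $B_{jj}+2C_{jj}+1+D_{jj}$.

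Now assume $B$ represents an even form, that is, every diagonal entry of $B$ is even (otherwise there is nothing to do). I would split into two cases. If some $D_{jj}$ is even, then applying $P_{E_{jj}}$ produces a new $(j,j)$-entry congruent to $1\pmod{2}$, so the resulting upper-left block is odd and we are done. Otherwise every $D_{jj}$ is odd, so $D$ itself represents an odd form; in this case the symplectic swap $P_0=\bigl(\begin{smallmatrix}0&\ID\\-\ID&0\end{smallmatrix}\bigr)$ transforms $V$ into
\[
P_0^\top V P_0=\begin{pmatrix}D & -C^\top\\ -C-\ID & B\end{pmatrix},
\]
still of the kind \cref{eq:1} (now with $C$ replaced by $-C^\top-\ID$) and with new upper-left block $D$, which is odd. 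I do not anticipate any serious obstacle beyond these elementary block-matrix verifications; the only conceptual point is that preserving the shape \cref{eq:1} forces the congruence matrix to lie in $\mathrm{Sp}(2g,\Z)$ rather than in all of $\mathrm{GL}(2g,\Z)$, so the moves must be chosen from that subgroup.
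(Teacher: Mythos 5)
Your proof is correct and follows essentially the same route as the paper's: a shear congruence by $\bigl(\begin{smallmatrix}\ID&0\\Y&\ID\end{smallmatrix}\bigr)$ when $D$ has an even diagonal entry (the paper takes $Y=\ID$ under the hypothesis that the whole form $D$ is even, you take $Y=E_{jj}$; the case splits are complementary either way) together with the identical symplectic swap when $D$ is odd. One pedantic correction: $P_{E_{jj}}$ changes all of row $j$ and column $j$ of the upper-left block (and also the off-diagonal blocks), not only the $(j,j)$-entry, but since the only \emph{diagonal} entry of the upper-left block that is affected is the $(j,j)$-one, your parity argument goes through unchanged.
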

\begin{proof}
Suppose $B$ is even. Then we distinguish two cases, depending on whether $D$ is even as well. If it is, a simple change of basis yields the following
congruent matrix:
\[
\begin{pmatrix}
B + C + C^{\top} + \ID + D & C + D + \ID \\
C^{\top} + D & D
\end{pmatrix},
\]
and $B + C + C^{\top} + \ID + D$ is odd, because $B$, $D$, and $C+C^{\top}$ are even,
while $\ID$ is odd.
If, on the other hand, $D$ is odd, then
again, a simple change of basis gives the congruent matrix:
\[
\pushQED{\qed}
\begin{pmatrix}
D & -C^{\top} \\
- C - \ID & B
\end{pmatrix}.
\qedhere
\popQED
\]
\renewcommand{\qedsymbol}{}
\end{proof}
Before addressing $\ua\leq 2\ga$, let us warm up by directly proving \cref{cor:ualeqdegAlex}
(which also follows from \cref{thm:galequaleq2ga}); i.e.~we show that for all knots $K$, we have
\[\ua(K)=n(K) \leq \deg \Alex{K}.\]
\begin{proof}[Proof of \cref{cor:ualeqdegAlex}]
It is well-known that the S-equivalence class of Seifert forms of $K$ contains a Seifert matrix $V$ of size $\deg\Alex{K}$;
indeed, every S-equivalence class has a non-singular representative \cite{Trotter_62_HomologywithApptoKnotTheory},
whose dimension must equal the degree of the Alexander polynomial.

After a basis transformation, we may assume that $V$ is of the kind \cref{eq:1}.
By \cref{lem:1} we may assume $B$ represents an odd form. Thus
\[
\widetilde{V}(1) = \begin{pmatrix}
B & -\ID \\
-\ID & 0
\end{pmatrix}
\]
is odd as well; and furthermore symmetric, unimodular and indefinite, and thus congruent over $\Z$
to a diagonal matrix by \cref{thm:1}. Since $\lambda(\widetilde{V})$ represents the Blanchfield pairing, this concludes the proof.
\end{proof}
\begin{proof}[Proof of the second inequality of \cref{thm:galequaleq2ga}]
Let $V$ be a Seifert matrix of $K$ of size $2g$ with an Alexander-trivial subgroup of rank $2g-2\ga(K)$. Such a $V$ exists by the definition of $\ga$.
Using \cref{lem:stdformofalextriv}, one may change the basis such that the first $2g-2\ga(K)$ basis vectors generate the Alexander-trivial subgroup, and such that $V$ appears as follows:
\[
\raisebox{.3ex}{$%
\begin{matrix}
\scriptstyle \tiny g-\ga(K)\\
\scriptstyle \tiny g-\ga(K)\\
\scriptstyle \tiny \ga(K)\\
\scriptstyle \tiny \ga(K)\\
\end{matrix}$}\quad
\begin{pmatrix}
0        & \ID + U     & E & F        \\
U^{\top} & 0           & G     & H \\
E^{\top} & G^{\top}    & B            & \ID + C \\
F^{\top} & H^{\top}    & C^{\top}     & D
\end{pmatrix}.
\]
Here, all upper-case letters denote square matrices, whose sizes are indicated to the left of the matrix.
The matrix $U$ is upper triangular with zeros on the diagonal;
and the matrices $B$ and $D$ are symmetric. Furthermore, we may assume $B$ represents an odd form
by applying \cref{lem:1} to the lower right $2\ga(K) \times 2\ga(K)$ submatrix of $V$;
note that the involved basis change does not affect the upper left quadratic submatrix of size $2(g-\ga(K))$.

If one swaps the second and third column and second and third row of $V$, one obtains a matrix $V'$ of the kind \cref{eq:1}, so $\lambda$
of the following matrix $\widetilde{V'}$ is isometric to the Blanchfield pairing:
\[
\footnotesize
\begin{pmatrix}
0        & E        & -t\ID + (1-t)U & (1-t)F \\
E^{\top} & B        & (1-t)G^{\top}               & -t\ID + (1-t)C \\
-t^{-1}\ID + (1-t^{-1})U^{\top} & (1-t^{-1})G & 0               & xH \\
(1-t^{-1})F^{\top} & -t^{-1}\ID + (1-t^{-1})C^{\top} & xH^{\top}         & xD
\end{pmatrix}.
\]
Again swapping the second and third column and row now gives a matrix $W_1$ equal to
\[
\footnotesize
\begin{pmatrix}
0        & -t\ID + (1-t)U & E & (1-t)F \\
-t^{-1}\ID + (1-t^{-1})U^{\top} & 0        & (1-t^{-1})G & xH \\
E^{\top} & (1-t)G^{\top} & B               & -t\ID + (1-t)C \\
(1-t^{-1})F^{\top} &xH^{\top} & -t^{-1}\ID + (1-t^{-1})C^{\top} & xD
\end{pmatrix}.
\]
Since $\det(-t\ID + (1-t)U) = (-t)^{\ga}$ is a unit in $\Lambda$, there is an inverse $S = (-t\ID + (1-t)U)^{-1}$ over $\Lambda$.
Now let the transformation matrix $T$ be
\[
\begin{pmatrix}
\ID & 0 & -(1-t^{-1})\overline{S}^{\top}G  & -x\overline{S}^{\top}H \\
0 & \ID & -SE & -(1-t)SF \\
0 & 0 & \ID & 0 \\
0 & 0 & 0 & \ID \\
\end{pmatrix}.
\]
Note that $\det(T) = 1$. One may compute $W_2 = \overline{T}^{\top} W_1 T$ to be the block sum of
the quadratic matrix of size $2(g - \ga(K))$
\[
W_3 = \begin{pmatrix}
0        & -t\ID + (1-t)U \\
-t^{-1}\ID + (1-t^{-1})U^{\top} & 0
\end{pmatrix}\]
and another quadratic matrix $W_4$ of size $2\ga(K)$, which we do not write out for aesthetic reasons.
The first block $W_3$ can be split off because it has determinant $1$.
In other words, since $\lambda(W_2)$ is isometric to the Blanchfield
pairing, and $W_2 = W_3 \oplus W_4$, we find $\lambda(W_4)$ to be isometric to the Blanchfield pairing as well.
If $W_4$ evaluates at $t = 1$ to an integral matrix
that is congruent to a diagonal matrix, then we have proven that
$\ua(K) = n(K) \leq 2\ga(K)$, as desired.
Note that
\[
W_1(1) = \begin{pmatrix}
0 & -\ID & E(1) & 0 \\
-\ID & 0 & 0 & 0 \\
E(1) & 0 & B(1) & -\ID \\
0 & 0  & -\ID & 0
\end{pmatrix}
\]
and
\[
T(1) =
\begin{pmatrix}
\ID & 0 & 0 & 0 \\
0 & \ID & E(1) & 0 \\
0 & 0 & \ID & 0 \\
0 & 0 & 0 & \ID \\
\end{pmatrix}\qquad\Rightarrow\qquad
W_4(1) =
\begin{pmatrix}
B(1) & -\ID \\
-\ID & 0
\end{pmatrix}.
\]
So $W_4(1)$ is indeed congruent to a diagonal matrix by \cref{thm:1}.
\end{proof}
\begin{Example}
There is no obvious way in which \cref{thm:galequaleq2ga} could be sharpened, since each of the two inequalities in that theorem
 may be an equality.
One need not look far for examples.
On the one hand, $\ga(K) = \ua(K)$ occurs e.g. for knots $K$ with $|\sigma(K)| = 2g(K)$, such as 2--stranded torus knots.
On the other hand, the algebraic unknotting number may exceed the 3--genus of a knot, which is an upper bound for the
algebraic genus; e.g.~$\ua(7_4) = 2, g(7_4) = \ga(7_4) = 1$, or $\ua(9_{49}) = 3, g(9_{49}) = \ga(9_{49}) = |\sigma(9_{49})|/2 = 2$.
\end{Example}

\section{The algebraic genus of fibered knots}\label{sec:fibred}

By the definitions (compare \cref{sec:intro} and \cref{def:galg}), we have $t(K)\leq\ga(K)$ for all knots, where $t$ denotes Taylor's invariant. A priori, $\ga$ can be arbitrarily larger than Taylor's invariant $t$. In particular, for knots $K$ with Alexander polynomial of degree $4$, $\ga(K)\leq 2=\frac{\deg\Alex{K}(t)}{2}$; and one would suspect that $2$ can be attained independently of the value of $t(K)$. However, it turns out that if additionally $t(K)=0$, i.e.~$K$ is algebraically slice, and $\Alex{K}(t)$ is monic, then $\ga(K)$ is at most 1. In fact, we show the following.
\begin{prop}\label{prop:fibred}
  If a knot $K$ is algebraically slice and has monic Alexander polynomial of degree $4$, then
$\ga(K)=1$.
\end{prop}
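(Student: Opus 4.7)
The plan is to produce, for a suitable Seifert matrix $V$ of $K$ of size $4 \times 4$, a rank-$2$ Alexander-trivial direct summand of $\Z^4$; this gives $\widetilde{\ga}(V) \leq 2-1=1$ and hence $\ga(K) \leq 1$ by \Cref{prop:gaS-equivalenceclass}. The reverse inequality $\ga(K) \geq 1$ is immediate from $\Alex{K} \neq 1$.

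Since $\Alex{K}$ is monic of degree $4$, the S-equivalence class of Seifert forms of $K$ contains a non-degenerate representative $V$ of size $4 \times 4$ with $\det V = \pm 1$, and $t(K) = 0$ provides a rank-$2$ Lagrangian $L \subset \Z^4$. Choosing a basis whose first two vectors span $L$, and then using invertibility of the top-right block (forced by $\det V = \pm 1$) to change basis on the complementary summand, I normalize $V = \begin{pmatrix} 0 & I \\ Q & R \end{pmatrix}$ with $Q \in GL_2(\Z)$. Unimodularity of $V - V^\top$, computed via the block determinant formula, forces $\det(Q - I)^2 = 1$, so $\det(Q - I) = \pm 1$; together with $\det Q = \pm 1$ this pins down the characteristic polynomial $f_Q(t) = t^2 - \mathrm{tr}(Q)\,t + \det Q$ to exactly four possibilities: $t^2 - t + 1$, $t^2 - 3t + 1$, $t^2 + t - 1$, $t^2 - t - 1$.

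The crux---and the main technical obstacle---is to show that $Q$ is $GL_2(\Z)$-conjugate to its companion matrix $\begin{pmatrix} 0 & -\det Q \\ 1 & \mathrm{tr}(Q) \end{pmatrix}$. Each of the four $f_Q$ is irreducible with discriminant in $\{-3, 5\}$, and $\Z[t]/(f_Q)$ is the full ring of integers of $\Q(\sqrt{-3})$ or $\Q(\sqrt{5})$, both principal ideal domains. The Latimer-MacDuffee correspondence---the bijection between ideal classes of $\Z[t]/(f_Q)$ and $GL_2(\Z)$-conjugacy classes of integer matrices with characteristic polynomial $f_Q$---then forces $Q$ to be conjugate to its companion matrix. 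Equivalently, the binary quadratic form $u \mapsto \det(u, Qu) = Q_{21} u_1^2 + (Q_{22} - Q_{11})\,u_1 u_2 - Q_{12} u_2^2$ on $\Z^2$ has discriminant $\mathrm{tr}(Q)^2 - 4\det Q \in \{-3, 5\}$, and the principal form of either discriminant represents $\pm 1$; this yields a primitive $u$ with $(u, Qu)$ a basis of $\Z^2$.

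Applying this conjugation via a basis change $S$ on $L$ (together with $S^{-\top}$ on the complementary summand, preserving $P = I$), I may assume $Q_{11} = 0$. The rank-$2$ direct summand $U = \mathrm{span}(e_1, e_3) \subset \Z^4$ then inherits the restricted Seifert form $\begin{pmatrix} 0 & 1 \\ 0 & R_{11} \end{pmatrix}$, which matches \cref{eq:MforAlexTriv} with $n = 1$ and satisfies $\det(tA - A^\top) = t$; hence $U$ is Alexander-trivial, concluding the argument.
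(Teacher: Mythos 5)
Your argument is correct, and its skeleton matches the paper's: pass to a $4\times4$ unimodular metabolic Seifert matrix via Trotter, normalize it to $\left(\begin{smallmatrix} 0 & \ID \\ Q & R\end{smallmatrix}\right)$ with $Q\in GL_2(\Z)$ and $\det(Q-\ID)=\pm1$, note that conjugating $Q$ by $S$ (while acting by $S^{-\top}$ on the complementary summand) preserves this shape, and check that once $Q_{11}=0$ the summand spanned by $e_1,e_3$ is Alexander-trivial, giving $\ga(K)\leq 1$; the reverse inequality is immediate. Where you genuinely diverge from the paper is in the key step of making $Q_{11}=0$. The paper's \cref{prop:4x4} does this by elementary reduction: it assumes $|Q_{11}|$ is minimized within the conjugacy class, extracts the inequalities $|a|\leq|b|/2$, $|a|\leq|c|/2$, $|a|\leq|d|$, $|a+d|\leq3$ from the two determinant conditions, and eliminates $a\neq0$ by a short arithmetic case analysis. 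You instead observe that those same determinant conditions pin the characteristic polynomial of $Q$ down to four irreducible possibilities of discriminant $-3$ or $5$, and then conjugate $Q$ to its companion matrix using class number one of $\Z[\zeta_6]$ and $\Z[\tfrac{1+\sqrt5}{2}]$---via Latimer--MacDuffee, or equivalently via the fact that the primitive binary quadratic form $u\mapsto\det(u\mid Qu)$ of discriminant $-3$ or $5$ represents $\pm1$. Your route is less elementary but more conceptual: it explains that the lemma holds \emph{because} the relevant quadratic orders are principal ideal domains, which also illuminates why nothing analogous survives for $6\times6$ matrices (\cref{ex:r2}); the paper's reduction argument buys self-containedness and avoids any appeal to algebraic number theory. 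Two minor points to make explicit if you write this up: replace the metabolizer by its saturation so that it is genuinely a direct summand of $\Z^4$, and note that the form $\det(u\mid Qu)$ is primitive because its discriminant is squarefree.
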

To the authors this was surprising; for example, since fibered knots are known to have monic Alexander polynomial, this yields the following.
\begin{corollary}
Algebraically slice, genus $2$, fibered knots have topological slice genus at most $1$.\qed
\end{corollary}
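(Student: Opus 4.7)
The plan is to prove the non-trivial inequality $\ga(K) \leq 1$; the opposite bound $\ga(K) \geq 1$ is automatic, since $\Alex{K}$ has breadth $4$ and so is not $1$, which by the introduction forces $\ga(K) \neq 0$. By \cref{prop:gaS-equivalenceclass} I may compute $\ga(K)$ from any bilinear form S-equivalent to a Seifert form of $K$. Using Trotter's theorem that every S-equivalence class has a nonsingular representative, I pick a size-$4$ Seifert matrix $V$; monicity of $\Alex{K}$ then forces $\det V = \pm 1$. Algebraic sliceness provides a rank-$2$ metabolizer $U \subset \Z^4$. After choosing coordinates with $U = \langle e_1, e_2 \rangle$, the matrix $V$ takes the block form $\bigl(\begin{smallmatrix} 0 & A \\ B & C\end{smallmatrix}\bigr)$; the identity $\det V = \det A\cdot \det B = \pm 1$ forces $A, B \in GL_2(\Z)$. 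A further basis change on the complement of $U$ normalizes $A$ to the identity, and then unimodularity of $V - V^\top$ becomes the constraint $\det(B - \ID) = \pm 1$.

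It suffices to exhibit a rank-$2$ Alexander-trivial summand of $\Z^4$: two vectors $u_1, u_2$ generating a summand with $\det(\theta|_{\langle u_1, u_2\rangle}) = 0$ and $\theta(u_1, u_2) - \theta(u_2, u_1) = \pm 1$. I look for $u_1 = a e_1 + b e_2 \in U$ (which gives $\theta(u_1, u_1) = 0$ for free) and $u_2 = (p, q, r, s) \in \Z^4$. Requiring $\theta(u_1, u_2) = \varepsilon \in \{\pm 1\}$ and $\theta(u_2, u_1) = 0$ produces a $2 \times 2$ linear system in $(r, s)$ whose coefficient determinant is the binary quadratic form
\[
Q(a, b) = B_{21} a^2 + (B_{22} - B_{11}) ab - B_{12} b^2.
\]
The system admits an integer solution as soon as $Q(a, b) = \pm 1$, and once such a primitive $(a, b)$ is fixed, Bezout produces $(p, q)$ with $aq - bp = 1$, so that $\{u_1, u_2\}$ is a summand.

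The crux is therefore to show that $Q$ primitively represents $\pm 1$. A brief expansion yields $\operatorname{disc}(Q) = \operatorname{tr}(B)^2 - 4 \det(B)$. Combined with $\det B \in \{\pm 1\}$, the relation $\det(B - \ID) = \det B - \operatorname{tr}(B) + 1 \in \{\pm 1\}$ restricts $(\det B, \operatorname{tr} B)$ to the four pairs $(1, 1), (1, 3), (-1, -1), (-1, 1)$, hence $\operatorname{disc}(Q) \in \{-3, 5\}$. Squarefreeness of these values makes $Q$ automatically primitive, and class number one for both discriminants forces $Q$ to be $GL_2(\Z)$-equivalent, up to sign, to the principal form of its discriminant. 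The principal forms $a^2 + ab + b^2$ (discriminant $-3$) and $a^2 + ab - b^2$ (discriminant $5$) primitively represent $\pm 1$, so $Q$ does as well, providing the desired $(a, b)$ and finishing the proof.

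The main obstacle I expect is this number-theoretic endgame: one must identify the exact list of possible discriminants from the algebraic constraints on $B$ and then invoke class-number-one to conclude that $Q$ represents a unit. The preceding geometric and linear-algebraic reductions—block decomposition of $V$, parametrization of $u_1 \in U$, and the Bezout step—are in comparison routine.
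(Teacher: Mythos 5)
Your argument is correct, and it reaches the key lemma of the paper (\cref{prop:4x4}) by a genuinely different route. The preliminary reductions coincide: the paper likewise passes to a $4\times4$ nonsingular metabolic representative of the S-equivalence class (quoting Trotter and \cref{prop:gaS-equivalenceclass}), normalizes it to $\bigl(\begin{smallmatrix}0 & \ID \\ B & *\end{smallmatrix}\bigr)$ with $B\in GL_2(\Z)$ and $\det(B-\ID)=\pm1$, and reduces to exhibiting a rank-$2$ Alexander-trivial subgroup. From there the paper argues by descent: it conjugates $B$ within $GL_2(\Z)$ so as to minimize the absolute value of the $(1,1)$-entry $a$, extracts the inequalities $|a|\leq|b|/2$, $|a|\leq|c|/2$, $|a|\leq|d|$ and $|a+d|\leq3$ from $\det B=\pm1$ and $\det(B-\ID)=\pm1$, and derives a contradiction unless $a$ can be made $0$, in which case two basis vectors span the desired plane. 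Your solvability condition---that $Q(a,b)=B_{21}a^2+(B_{22}-B_{11})ab-B_{12}b^2$ represent a unit---is in fact the same condition as the paper's $(T^{-1}BT)_{11}=0$ under the substitution $Te_1=(a,b)^{\top}$ (your $(r,s)$ is the first row of $T^{-1}$ up to sign), but you verify it via $\operatorname{disc}Q=\operatorname{tr}(B)^2-4\det B\in\{-3,5\}$, primitivity, and class number one, rather than by reduction. Your version is more conceptual: it pinpoints why degree $4$ is special (both admissible discriminants have class number one and minimal represented value $1$) and why the statement breaks for degree $6$ (cf.\ \cref{ex:r2}), whereas the paper's computation is more elementary and self-contained. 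One step you share with the paper but should make explicit is that the size-$4$ nonsingular representative inherits a metabolizer from the metabolic Seifert form of $K$; this is standard but not automatic from the definitions.
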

By recalling the definition of an algebraically slice knot (one, and thus all, Seifert matrices are metabolic) and the fact that every knot has a Seifert matrix of size $\deg\Alex{K}(t)\times \deg\Alex{K}(t)$ up to $S$-equivalence (which is implied by the fact that all Seifert matrices are $S$-equivalent to one with non-zero determinant as proven by Trotter~\cite{Trotter_62_HomologywithApptoKnotTheory}), \cref{prop:fibred} follows from the following lemma.
\begin{lemma}\label{prop:4x4}
Let $A$ be an integral metabolic $4\times 4$ matrix.
Suppose $A$ and $A-A^{\top}$ are invertible.
Then there is a subgroup of $\Z^4$ of rank two restricted to which
$A$ has the form
\[
\begin{pmatrix}
0 & 1 \\
0 & *
\end{pmatrix}
\]
\end{lemma}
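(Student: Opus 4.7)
The plan is to exploit the two unimodularity hypotheses ($\det A=\pm 1$ and $\det(A-A^\top)=1$) to reduce the problem to representing $\pm 1$ by a binary integral quadratic form whose discriminant is forced into a very short list.

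First, the rank-$2$ metabolizer $V$ of $A$ is automatically isotropic for the unimodular symplectic form $A-A^\top$, and, being of rank $2$ inside $\Z^4$, is a Lagrangian. I would extend a $\Z$-basis of $V$ to a symplectic basis $v_1, v_2, w_1, w_2$ of $\Z^4$ in which $A-A^\top = \begin{pmatrix}0 & I \\ -I & 0\end{pmatrix}$. In this basis $A$ takes the block form
\[
A=\begin{pmatrix}0 & Q^\top+I \\ Q & R\end{pmatrix}
\]
with $Q\in M_2(\Z)$ and $R=R^\top$. A short block computation (swapping the top and bottom row pairs yields an upper-triangular block matrix) gives $\det A = \det(Q)\cdot\det(Q+I)$, and since $\det A=\pm 1$, both $\det Q$ and $\det(Q+I)$ must lie in $\{\pm 1\}$.

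Next I would look for the desired subgroup among those spanned by a primitive $v=(\alpha,\beta,0,0)^\top\in V$ and some $w=(\alpha',\beta',\gamma',\delta')^\top\in\Z^4$. The equation $v^\top A v=0$ is automatic, and a direct computation reduces the remaining conditions $v^\top A w=1$ and $w^\top A v=0$ to the single $2\times 2$ integral linear system
\[
\begin{pmatrix}\alpha & \beta \\ p_1 & p_2\end{pmatrix}\begin{pmatrix}\gamma' \\ \delta'\end{pmatrix}=\begin{pmatrix}1 \\ 0\end{pmatrix},\qquad \begin{pmatrix}p_1 \\ p_2\end{pmatrix}:=Q\begin{pmatrix}\alpha \\ \beta\end{pmatrix},
\]
with $\alpha', \beta'$ entirely free. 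This is solvable as soon as the coefficient matrix lies in $GL_2(\Z)$.

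The determinant of that coefficient matrix equals the binary integral quadratic form $q(\alpha,\beta)=Q_{21}\alpha^2+(Q_{22}-Q_{11})\alpha\beta-Q_{12}\beta^2$, whose discriminant computes to $\operatorname{tr}(Q)^2-4\det(Q)$. Combining the identity $\det(Q+I)=\det(Q)+\operatorname{tr}(Q)+1$ with $\det Q,\det(Q+I)\in\{\pm 1\}$ forces $\operatorname{tr}(Q)\in\{-3,-1,1\}$, so the discriminant of $q$ lies in $\{-3,5\}$. This is the step I expect to be the crux: both quadratic fields $\Q(\sqrt{-3})$ and $\Q(\sqrt{5})$ have class number one, so every primitive integral binary quadratic form of discriminant $-3$ or $5$ is equivalent to the principal form and therefore primitively represents $\pm 1$. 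Since $q$ is itself primitive (its discriminant is squarefree), this yields a primitive $(\alpha,\beta)$ with $q(\alpha,\beta)=\pm 1$; this makes the coefficient matrix unimodular, produces the integer solution $(\gamma',\delta')$, and completes the construction of $v$ and $w$.
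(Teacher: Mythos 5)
Your proof is correct, but it takes a genuinely different route from the paper's. The paper also starts from the block shape $\bigl(\begin{smallmatrix} 0 & U \\ V & * \end{smallmatrix}\bigr)$ forced by the metabolizer, but it normalizes the top-right block to the identity and then runs a purely elementary reduction: conjugate the remaining $2\times 2$ block $V'$ by $GL_2(\Z)$ to minimize a diagonal entry, and use $\det V'=\pm1$ and $\det(V'-\ID)=\pm1$ to derive inequalities forcing that entry to be $0$. You instead normalize the antisymmetrization to the standard symplectic form and reduce everything to the primitive representation of a unit by the binary form $q$ attached to $Q$; your computations check out ($\det A=\det Q\cdot\det(Q+\ID)$, the linear system with determinant $q(\alpha,\beta)=Q_{21}\alpha^2+(Q_{22}-Q_{11})\alpha\beta-Q_{12}\beta^2$ of discriminant $\operatorname{tr}(Q)^2-4\det Q$, the case analysis via $\det(Q+\ID)=\det Q+\operatorname{tr}Q+1$ giving $\operatorname{disc}(q)\in\{-3,5\}$, primitivity of $q$ from squarefreeness, and automatic primitivity of any vector representing $\pm1$). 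Two small points to make explicit: extending a basis of the metabolizer to a symplectic basis requires the metabolizer to be a direct summand, so pass to its saturation first (on which $A$ still vanishes); and for discriminant $-3$ the form may be negative definite, hence equivalent only to \emph{minus} the principal form---it then represents $-1$ rather than $+1$, which is all you need. As for what each approach buys: the paper's argument is entirely self-contained, using nothing beyond integer inequalities, whereas yours is conceptually cleaner, isolates the genuinely arithmetic input (class number one for the discriminants $-3$ and $5$), and in fact explains the paper's terminal step, where the impossible values $ad-bc\in\{\pm3,\pm5\}$ are exactly your discriminants; it also makes it plausible why no such statement survives in higher rank, consistent with \cref{ex:r2}.
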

Before providing the proof, which consists of an elementary calculation, we provide an example that shows that no analog statement holds for $6\times 6$ Seifert matrices.
\begin{Example}\label{ex:r2}
Let $K$ be a knot with the following Seifert matrix of det 1:
\[M=\begin{pmatrix}
0 & 0 & 0 & 1 & 0 & 0\\
0 & 0 & 0 & 0 & 1 & 0\\
0 & 0 & 0 & 0 & 0 & 1\\
-4 & -3 & -6 & 0 & 0 & 0\\
-3 & -1 & -3 & 0 & 0 & 0\\
-6 & -3 & -7 & 0 & 0 & 0\\
\end{pmatrix}.
\]
By definition $K$ is algebraically slice; however, $M + M^{\top}$ is the zero matrix modulo $3$, whence $\ga(K)=3=\frac{\deg\Alex{K}(t)}{2}$ by \Cref{prop:bounds}(ii).
\end{Example}
\begin{proof}[Proof of \cref{prop:4x4}]
By assumption, $A$ is of the form
\[
\left(
\begin{array}{c|c}
0 & U \\\hline
V & *
\end{array}\right)
\]
for $2\times 2$ matrices $U, V$.
Invertibility of $A$ is inherited by $U$ and $V$, so $A$ is congruent to
\begin{equation}\label{eq:small1}
\left(
\begin{array}{c|c}
\ID & 0 \\\hline
\rule{0pt}{2.1ex}0 & U^{-t}
\end{array}\right)\cdot
\left(
\begin{array}{c|c}
0 & U \\\hline
\rule{0pt}{2.1ex}V & *
\end{array}\right)
\cdot
\left(
\begin{array}{c|c}
\ID & 0 \\\hline
\rule{0pt}{2.1ex}0 & U^{-1}
\end{array}\right)
 =
\left(
\begin{array}{c|c}
0 & \ID \\\hline
\rule{0pt}{2.1ex}V' & *
\end{array}\right).
\end{equation}
Let
\[
V' = \begin{pmatrix}
a & b \\
c & d
\end{pmatrix}.
\]
If $a = 0$, the subgroup generated by the first and third basis vector is of the desired form
with respect to the matrix \cref{eq:small1}, and similarly if $d = 0$.
For any invertible $2\times 2$ matrix $T$, the matrix \cref{eq:small1} is congruent to:
\[
\left(
\begin{array}{c|c}
T^{\top} & 0 \\\hline
\rule{0pt}{2.2ex}0 & T^{-1}
\end{array}\right)\cdot
\left(
\begin{array}{c|c}
0 & \ID \\\hline
\rule{0pt}{2.2ex}V' & *
\end{array}\right)
\cdot
\left(
\begin{array}{c|c}
T & 0 \\\hline
\rule{0pt}{2.2ex}0 & T^{-\top}
\end{array}\right)
 =
\left(
\begin{array}{c|c}
0 & \ID \\\hline
\rule{0pt}{2.2ex}T^{-1}V'T & *
\end{array}\right).
\]
So one may try to decrease $|a|$ by replacing $V'$ by $T^{-1}V'T$.
Assume this is no longer possible.
Taking
\[
T = \begin{pmatrix}
1 & \pm 1 \\
0 & 1
\end{pmatrix}
\]
replaces $a$ by $a \mp c$. So our assumption yields that $|a| \leq |c|/2$,
and similarly $|a| \leq |b|/2$.
Taking
\[
T = \begin{pmatrix}
0 & 1 \\
-1 & 0
\end{pmatrix}
\]
switches $a$ and $d$, so we also have $|a| \leq |d|$.
We have $\det V' = \pm 1$, and $\det (V'-\ID) = \pm 1$.
Thus
\begin{align*}
|\det V' - \det (V'-\ID)| & \leq 2 \Rightarrow \\
|(ad-bc) - (ad - bc - a - d + 1)|  & \leq 2 \Rightarrow  \\
|a + d| & \leq 3.
\end{align*}
Therefore,
\begin{align*}
bc & = ad \pm 1 \Rightarrow \\
|bc| & \leq |a||(a+d) - a| + 1 \\
 & \leq |a|^2 + |a+d||a| + 1 \\
 & \leq |bc|/4 + 3|c|/2 + 1 \Rightarrow \\
0  & \leq -3|bc| + 6|c| + 4 \Rightarrow \\
0  & \leq (6-3|b|)|c| + 4.
\end{align*}
This implies that if $|b| \geq 3$, then $|c| \leq 1$, which in turn implies $|a| = 0$ by our assumption.
Since one may switch the role of $b$ and $c$, the only remaining case is
$|b|, |c| = 2$ and $|a|, |d| = 1$. But these values contradict that $\det V' = 1$.
\end{proof}

\section{On the optimality of slice genus bounds}\label{sec:optimality}
The algebraic genus $\ga(L)$ of a link $L$ is an upper bound for the topological slice genus of $L$ that
depends only on the S-equivalence class of Seifert forms for $L$.
\Cref{q:galg=bestclassicalupperbound} asks if it is the best bound with that property. In this section, we pursue this and
related questions on the optimality of slice genus bounds.
To make the dependency on the Seifert form more precise, let us fix for each Seifert form $\theta$
the following sets of links:\\
\begin{align*}
\mathcal{E}_{\theta} & = \Bigl\{\parbox{75mm}{\centering links that admit a Seifert surface with\linebreak Seifert form isometric to $\theta$}\Bigr\}, \\
\rotatebox{90}{$\supset$} \\
\mathcal{S}_{\theta} & = \Bigl\{\parbox{75mm}{\centering links that admit a Seifert surface with\linebreak Seifert form S-equivalent to $\theta$}\Bigr\}, \\
\rotatebox{90}{$\supset$} \\
\mathcal{C}_{\theta} & = \Bigl\{\parbox{75mm}{\centering links that admit a Seifert surface with\linebreak Seifert form algebraically concordant to $\theta$}\Bigr\}. \\
\end{align*}
Written in this notation, the statement of \Cref{prop:gaS-equivalenceclass} is that
\[
\forall\, L, L' \in \mathcal{S}_{\theta} \quad\Rightarrow\quad \ga(L) = \ga(L'),
\]
the inequality \Cref{eq:gt<=ga} says that for all Seifert forms $\theta$,
\begin{equation}\label{eq:gtop<=ga}
\max_{L'\in\mathcal{S}_{\theta}} \gtop(L') \leq  \ga(\theta),
\end{equation}
and \Cref{q:galg=bestclassicalupperbound} asks whether the inequality \cref{eq:gtop<=ga} is in fact an equality.
This question is about the slice genus. A corresponding qualitative question about sliceness
would be: which Seifert forms guarantee the sliceness of a link? More specifically, given a Seifert form $\theta$, does the following hold:
\begin{equation}\label{eq:opt}
\ga(\theta) > 0 \quad\Rightarrow\quad \max_{L\in\mathcal{E}_{\theta}} \gtop(L) > 0?
\end{equation}

The rest of this section pursues \cref{q:galg=bestclassicalupperbound} for knots. Except for \cref{rmk:opt}, where answers for related questions are provided, we only consider knots rather than links in the rest of this section.

Note that for the Seifert form of a knot, $\ga(\theta) = 0 \Leftrightarrow \Alex{\theta} = 1$.
Livingston~\cite{livingstonseifert} proved \Cref{eq:opt} for all Seifert forms $\theta$ of knots satisfying a technical condition on the Alexander polynomial of $\theta$.
For this, he used Casson-Gordon obstructions to sliceness, which involve the $d$--fold branched covers of a knot for prime powers $d$.
For non-prime powers, no such obstructions are available, which is precisely the reason that Casson-Gordon obstructions do not solve \cref{eq:opt} for all Seifert forms of knots.
The proof of \Cref{eq:opt} for Seifert forms of knots was completed by Kim using $L^2$--invariants---for Seifert forms of links,
it appears to be open.
\begin{thm}[\cite{kim}]\label{thm:kim}
Let $\theta$ be the Seifert form of a knot with Alexander polynomial $\Alex{\theta}$ not equal to $1$.
Then $\theta$ is realized as the Seifert form of a knot that is not topologically slice.
\end{thm}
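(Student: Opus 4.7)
The plan is to realize $\theta$ by an explicit knot $K_0$ (for instance via the standard construction of plumbing bands prescribed by a Seifert matrix representing $\theta$) and then to modify $K_0$ by an infection (i.e.\ a satellite operation) along a carefully chosen curve to produce a new knot $K$ with the same Seifert form but non-trivial $L^2$--signature obstruction to topological sliceness.

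The first step is to choose a simple closed curve $\eta \subset S^3\setminus K_0$ that is unknotted in $S^3$ and null-homologous in $S^3\setminus K_0$, but whose homology class in the infinite cyclic cover of $S^3\setminus K_0$ is a nonzero element of the Alexander module detected by a suitable metabelian representation. Because $\Alex{\theta}\neq 1$, the Alexander module is non-trivial, so such a curve exists; the Blanchfield pairing of $K_0$ provides the guide for which homology class to choose. The second step is the infection $K := K_0(\eta, J)$, in which a tubular neighborhood of $\eta$ is replaced by the exterior of an auxiliary knot $J$. The key algebraic fact is that since $\eta$ is unknotted in $S^3$ and null-homologous in $S^3\setminus K_0$, the infected knot $K$ admits a Seifert surface isometric to that of $K_0$, so $K$ realizes the same Seifert form $\theta$.

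The third step is to apply the Cochran-Orr-Teichner obstruction: if $K$ were topologically slice, then for any metabelian representation of $\pi_1(S^3\setminus K)$ that extends over the slice disk complement, the corresponding $L^2$--signature would vanish. One chooses the representation to factor through the quotient of $\pi_1$ by its second derived subgroup, with the character on the second stage detecting $[\eta]$ in the Alexander module. For such a representation, the contribution to the $L^2$--signature from the infection is proportional to the integral $\int_{S^1} \sigma_\omega(J)\,d\omega$ of Levine-Tristram signatures of $J$, while the contribution from $K_0$ is a fixed quantity independent of $J$. Choosing $J$ with sufficiently large integral Levine-Tristram signature (of appropriate sign) then forces the $L^2$--signature to be nonzero, contradicting sliceness.

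The main obstacle is to ensure that a suitable curve $\eta$ and an extendable metabelian representation can be produced for \emph{every} Seifert form with $\Alex{\theta}\neq 1$. Livingston's earlier Casson-Gordon argument covers many cases but fails when $\Alex{\theta}$ lacks factors of the right type for the prime-power branched-cover invariants; the advantage of using $L^2$--signatures is precisely that they do not require such a prime-power hypothesis on the Alexander polynomial, allowing the argument to be pushed through uniformly for arbitrary $\theta$ with $\Alex{\theta}\neq 1$.
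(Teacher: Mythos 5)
The paper offers no proof of \cref{thm:kim}: it is quoted from \cite{kim}, and the surrounding discussion attributes to Kim exactly the strategy you outline---Livingston's Casson--Gordon argument \cite{livingstonseifert} handles Seifert forms whose Alexander polynomial satisfies a technical condition, and the remaining cases are completed with $L^2$--signatures in the sense of \cite{COT}. So your route is the intended one. As a proof, however, your third step has a genuine gap, and it is precisely the subtlety the paper flags in its introduction about ``the extension of representations of $\pi_1$ of knot complements to $\pi_1$ of the complements of surfaces in $B^4$.'' The Cochran--Orr--Teichner obstruction does not let you pick your favourite metabelian representation and conclude that its $\rho$--invariant vanishes. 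If $K$ were slice with disk $D$, what the theory provides is a self-annihilating submodule $P=P^{\perp}$ of the Alexander module with respect to the Blanchfield pairing (the kernel of the map induced by inclusion into the slice disk complement), determined by $D$ and not by you, such that only the representations associated to classes in $P$ are known to extend and to have vanishing (or genus-bounded) $L^2$--signature. Your argument selects a representation ``detecting $[\eta]$'' and derives a contradiction from its nonvanishing signature; but if $[\eta]$ happens to lie in $P$, or more precisely if the class defining your representation does not lie in $P$, the representation need not extend and no contradiction results. This is the actual content of the theorem, not a technicality: one must defeat \emph{every} candidate metabolizer simultaneously.

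The repair requires more than a single carefully chosen curve. First reduce to the case that $\theta$ is algebraically slice (otherwise Taylor's invariant, i.e.\ some Levine--Tristram signature, already shows that no realization of $\theta$ is slice). Then infect along a family of unknotted curves $\eta_1,\dots,\eta_k$, each disjoint from a fixed Seifert surface realizing $\theta$ (this, rather than mere null-homology in $S^3\setminus K_0$, is what guarantees the Seifert form is literally preserved, and it is also how one prescribes the classes $[\eta_i]$ in the Alexander module), chosen so that the $[\eta_i]$ generate the Alexander module. Since $\Alex{\theta}\neq 1$ forces every metabolizer $P$ to be nonzero, and the Blanchfield pairing is nonsingular, every $P$ contains a class pairing nontrivially with some $[\eta_i]$; choosing the infection knots $J_i$ with Levine--Tristram signatures all of one sign and large enough to dominate both the contribution of the base knot and any cancellation among the $\eta_i$ then obstructs every metabolizer at once. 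Your sketch omits this quantification over metabolizers entirely, which is where the real work of \cite{kim} lies.
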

Now, the strategy to attack the quantitative question must be
to construct for a given Seifert form $\theta$ a knot $K$ realizing $\theta$ with $\gtop(K) = \ga(K)$;
or, to obtain partial results, with $\gtop(K)$ as high as possible. To this end, one needs
lower bounds for the topological slice genus. To the best knowledge of the authors, there
are only three such bounds (disregarding those which are in fact only obstructions to sliceness):
\begin{itemize}
\item the Seifert form bounds (such as Levine-Tristram signatures),
      subsumed by the bound coming from Taylor's invariant $t(\theta)$ \cite{taylor};
\item the bounds from Casson-Gordon invariants \cite{gilmer};
\item and bounds coming from $L^2$--signatures \cite{Cha_08_TopMinGenusandL2}.
\end{itemize}
In what follows, we will apply the first two bounds of that list to the problem, and obtain partial results.
A fully affirmative answer would in all probability require stronger lower bounds for the topological slice genus than the ones at our disposal.

Since Taylor's bound is determined by the Seifert form, its only contribution to \cref{q:galg=bestclassicalupperbound} is
\[
t(\theta) \leq \max_{K\in\mathcal{S}_{\theta}} \gtop(K)
\]
for Seifert forms $\theta$ of knots.
In fact, Taylor's bound is rather the solution to the opposite problem---it is the optimal \emph{lower} bound determined by
the Seifert form.
The corresponding question for Seifert forms coming from links with more than one component appears to be open.

Let us now apply Casson-Gordon obstructions to \cref{q:galg=bestclassicalupperbound}.
We briefly fix our notations for branched covers and recall some of their well-known properties (cf.~e.g.~\cite{rolfsen_knotsandlinks}).
Let $K$ be a knot with Seifert form $\theta$. For a positive integer $d$, we write $M_d(K)$ for the $d$--fold branched covering of $S^3$ along $K$.
The homology group $H_1(M_d(K); \Z)$ is one of the oldest knot invariants. If $d$ is a prime power, then
$H_1(M_d(K);\Z)$ is a finite group. If $d$ is an odd prime power, that group is equal to $G\oplus G$ for some group $G$.
Denote by $r_d(K)$ the minimum number of generators of $H_1(M_d(K);\Z)$. Then
\begin{equation}\label{eq:rd}
0 \leq r_d(K) \leq \deg(\Alex{K}(t)),
\end{equation}
and $r_d(K)$ is even if $d$ is odd. While the order of $H_1(M_d(K);\Z)$ is determined by $\Alex{K}$, this is not the case for $r_d(K)$, which is,
however, determined by the S-equivalence class of $\theta$: e.g.~if $M$ is a matrix for $\theta$, and $P = (M^{\top} - M)^{-1} M^{\top}$, then
$P^d - (P - \ID)^d$ is a presentation matrix of $H_1(M_d(K);\Z)$. Thus, we also write $r_d(\theta)$ instead of $r_d(K)$.
Our result is now the following:
\begin{prop}\label{thm:cg}
Every Seifert form $\theta$ of a knot is realized by a knot $K$ with
\[
\phantom{\max_{K\in\mathcal{S}_{\theta}}}
\gtop(K) \geq \max_{\substack{d\text{ {prime}}\\\text{{power}}}} \biggl\lceil \frac{r_d(\theta)}{2(d - 1)} \biggr\rceil.
\]
\end{prop}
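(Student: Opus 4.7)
Following Livingston \cite{livingstonseifert}, I would construct $K$ by first realizing $\theta$ as the Seifert form of some base knot $K_0$, then inflating the Casson-Gordon invariants of $K_0$ via infection along carefully chosen curves, and finally extracting the claimed slice-genus lower bound from Gilmer's inequality \cite{gilmer}. Because $r_d$ is an S-equivalence invariant (it is determined by $\theta$), it suffices to fix one prime power $d^*$ attaining the maximum in the statement and produce $K\in\mathcal{S}_\theta$ with $\gtop(K)\geq\lceil r_{d^*}(\theta)/(2(d^*-1))\rceil$.

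\textbf{Construction.} Start with any knot $K_0\in\mathcal{S}_\theta$ together with a Seifert surface $F\subset S^3$ realizing $\theta$, and pick unknotted curves $\alpha_1,\dots,\alpha_N$ in $S^3$, where $N=r_{d^*}(\theta)$, such that each $\alpha_i$ bounds an embedded disk disjoint from $F$, while their lifts to $M_{d^*}(K_0)$ generate $H_1(M_{d^*}(K_0);\Z)$. Such curves exist because pushed-off cycles on $F$ provide an explicit generating set for $H_1$ of the branched cover and may be arranged in general position. Infecting $K_0$ along the $\alpha_i$ with companion knots $J_i$ (to be chosen) yields a knot $K$; since the infection disks are disjoint from $F$, the surface $F$ survives as a Seifert surface for $K$ with the same Seifert matrix, so $K\in\mathcal{S}_\theta$.

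\textbf{Lower bound and main obstacle.} Suppose for contradiction that $\gtop(K)=g<\lceil N/(2(d^*-1))\rceil$. Gilmer's theorem yields a metabolizer $P\subset H_1(M_{d^*}(K);\Z)$ of the linking form such that for every prime-power-order character $\chi$ vanishing on $P$, the Casson-Gordon invariant $\tau(K,\chi)$ is bounded above by an explicit function linear in $g$. By Litherland's satellite formula, for characters $\chi$ non-trivial on a lifted meridian of some $\alpha_i$ one has
\[
\tau(K,\chi)\;=\;\tau(K_0,\chi_0)+\sum_{i=1}^N \sigma_{\omega_i(\chi)}(J_i),
\]
where $\sigma_\omega$ denotes a Tristram-Levine signature and $\omega_i(\chi)$ is the value of $\chi$ on the lifted meridian of $\alpha_i$. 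Choosing each $J_i$ to have arbitrarily large Tristram-Levine signatures at all non-trivial $d^*$-th roots of unity drives $\tau(K,\chi)$ above Gilmer's bound on every such ``active'' character. The crux is then a counting argument: a metabolizer $P$ of the self-dual linking form can annihilate characters attached to at most $2(d^*-1)g$ of the $\alpha_i$, so the hypothesis $N>2(d^*-1)g$ forces the existence of an active character violating Gilmer's inequality. The main obstacle is precisely this counting step, in which the factor $2(d^*-1)$ must be extracted from the interplay of the metabolizer condition $P=P^{\perp}$ with the prime-power-order characters permitted by Casson-Gordon invariants; this is a delicate piece of linear algebra over $\Z/d^*\Z$, and it is the true source of the $2(d-1)$ denominator in the statement.
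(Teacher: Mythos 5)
Your overall strategy (infection following Livingston, then Gilmer's Casson--Gordon bound) is the same as the paper's, but the step you yourself flag as the crux --- the ``counting argument'' that a metabolizer can only kill characters attached to $2(d^*-1)g$ of the $\alpha_i$ --- is a genuine gap, and moreover it is not where the factor $2(d-1)$ comes from. The version of Gilmer's theorem you invoke (a single metabolizer $P$ of the linking form on all of $H_1(M_{d^*}(K);\Z)$, with the Casson--Gordon bound on characters vanishing on $P$) cannot yield any genus bound: a metabolizer always has order $\sqrt{|H_1(M_{d^*}(K))|}$ irrespective of $g$, so no linear algebra over $\Z/d^*$ will extract $g$ from it. Indeed, if all nontrivial prime-power-order characters violate the Casson--Gordon inequality, your statement forces $P^{\perp}=0$, hence $H_1(M_{d^*}(K))=0$, a contradiction independent of $g$ --- which shows the theorem as you state it is too weak (or wrong). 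The correct statement (\cref{gilmer}) decomposes the dual linking form as $\beta_1\oplus\beta_2$ on $G_1\oplus G_2$, where $G_1$ is generated by $2(d-1)\gtop(K)$ elements and only $\beta_2$ carries the metabolizer; the denominator $2(d-1)$ is imported wholesale from the $G_1$ part. With that statement in hand, no counting is needed: the paper arranges (via \cref{lem:livrel}, which is exactly your infection construction) that $\sigma\tau(K,d,\chi)$ exceeds $2d\gtop(K)+\bigl|\sum_j\sigma_{j/d}(K)\bigr|$ for \emph{every} nontrivial character $\chi$, so the metabolizer of $\beta_2$ contains no nontrivial element of prime power order, hence is trivial, hence $G_2=0$ and $H_1(M_d(K);\Z)^*=G_1$ needs at most $2(d-1)\gtop(K)$ generators, i.e.\ $r_d(\theta)\le 2(d-1)\gtop(K)$. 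Your argument becomes correct precisely by replacing your version of Gilmer with this one and deleting the counting step.

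A secondary but real slip: you require each $\alpha_i$ to bound an embedded disk disjoint from $F$ \emph{and} to have lifts generating $H_1(M_{d^*}(K_0);\Z)$. These are incompatible --- a curve bounding a disk disjoint from $F$ is null-homotopic in $S^3\setminus F$, so its lifts bound disks in the branched cover and are null-homologous there (and the infection along such a curve would not change $K_0$ at all). The correct requirement is that $\alpha_i$ be unknotted in $S^3$, disjoint from $F$, and null-homologous in $S^3\setminus F$; its disk then necessarily meets $F$. Relatedly, the companion knots $J_i$ must have large Levine--Tristram signatures at the roots of unity of order dividing the exponent of $H_1(M_{d^*}(K_0);\Z)$ (the possible values $\omega_i(\chi)$), not merely at $d^*$-th roots of unity.
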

\noindent Of course, this implies
\[
\max_{K'\in\mathcal{S}_{\theta}} \gtop(K') \geq
\max_{\substack{d\text{ {prime}}\\\text{{power}}}} \biggl\lceil \frac{r_d(\theta)}{2(d - 1)} \biggr\rceil.
\]
It also recovers the inequality
$2\ga(L) \geq r_2(L)$,
which was proved earlier on in \cref{prop:bounds}(ii).

Before proving \Cref{thm:cg}, we fix our notation of Casson-Gordon invariants.
The first integral homology group of $M_d(K)$ admits a linking form $\beta_d: M_d(K) \times M_d(K) \to \Q/\Z$, which is determined by $\theta$.
We write $H_1(M_d(K);\Z)^*$ for the group of characters $\chi$ of $H_1(M_d(K))$, i.e.~homomorphisms $H_1(M_d(K); \Z) \to \Q/\Z$.
The linking form then induces a  dual form $\beta_d^*$ on $H_1(M_d(K);\Z)^*$.
Casson and Gordon associate to $(K, d, \chi)$ an invariant
\[
\tau(K, d, \chi) \in W(\C(t))\otimes \Q,
\]
where $W(\,\cdot\,)$ denotes the Witt group~\cite{CassonGordon_86,cg}. One may take signatures of $\tau(K, d, \chi)$, who obstruct
the topological sliceness of $K$. We will only need the ordinary signature, which we denote by $\sigma\tau(K, d, \chi)$.
Gilmer showed how this knot invariant induces lower bounds for the topological slice genus:
\begin{lemma}[\cite{gilmer}]\label{gilmer}
For a knot $K$ and a prime power $d$, the form $\beta^*$ on $H_1(M_d(K);\Z)^*$ decomposes as a direct sum of forms $\beta_1$ and $\beta_2$
on $G_1$ and $G_2$, respectively; such that $G_1$ may be generated by
$2(d-1)\gtop(K)$ elements, and $\beta_2$ admits a metabolizer $H$ in which all non-trivial characters $\chi\in H$ of prime power order satisfy
\begin{equation}\label{eq:cgineq}
\pushQED{\qed}
\biggl|\sigma\tau(K, d, \chi) + \sum_{j=1}^{d-1} \sigma_{j/d}(K)\biggr| \leq 2d\gtop(K).
\qedhere
\popQED
\end{equation}%
\end{lemma}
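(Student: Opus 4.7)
The plan is to follow Casson-Gordon's strategy, adapted by Gilmer to topological slice surfaces of positive genus. Let $F \subset B^4$ be a locally flat oriented surface of genus $g = \gtop(K)$ with $\partial F = K$. Since $H_1(B^4 \setminus F)$ is normally generated by a meridian of $F$, there is a canonical surjection onto $\Z/d$, along which I would form the $d$-fold cyclic branched cover $W_d \to B^4$. Its boundary is $M_d(K)$, and an Euler characteristic calculation gives $\chi(W_d) = 1 + 2g(d-1)$, while Poincar\'e-Lefschetz duality on $(W_d, M_d(K))$ controls its individual Betti numbers.

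First, I would extract the decomposition of $\beta_d^*$ from the inclusion-induced map $r \colon H_1(M_d(K)) \to H_1(W_d)$. Let $L \subset H_1(M_d(K))$ denote the kernel of $r$ after localization at the prime $p$ dividing $d$; the Betti number count shows that $H_1(M_d(K))/L$ has $p$-rank at most $2(d-1)g$. Dualizing, the characters factoring through $H_1(M_d(K))/L$ assemble into a summand $G_1$ generated by at most $2(d-1)g$ elements, and $G_2$ is the complementary summand. The metabolizer $H$ of $\beta_2$ consists of the characters vanishing on $L$; the assertion that $L$ is self-annihilating under $\beta_d$ is the classical half-lives-half-dies principle applied to the pair $(W_d, M_d(K))$, with the free summand having been absorbed into $G_1$.

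Next, for a character $\chi \in H$ of prime power order, one may (after passing to a suitable finite cover $\widetilde{W_d}$ of $W_d$) extend $\chi$ to a character $\tilde{\chi}$ of $H_1(\widetilde{W_d})$; this extension is possible precisely because $\chi$ annihilates $L$. The Casson-Gordon recipe, underpinned by the Atiyah-Singer $G$-signature theorem, identifies
\[
\sigma\tau(K,d,\chi) + \sum_{j=1}^{d-1}\sigma_{j/d}(K) = \sigma_{\tilde{\chi}}(\widetilde{W_d}) - \sigma(\widetilde{W_d}),
\]
where the correction terms on the left arise as the contribution of $M_d(K)$ to the $G$-signature via the Atiyah-Patodi-Singer formula, and the signatures on the right are those of the intersection form on $H_2$ twisted by $\tilde{\chi}$ and untwisted, respectively. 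Bounding each of these 4-manifold signatures by the corresponding twisted second Betti number, which in turn is controlled by $dg$ through the Euler characteristic computation above, yields the claimed inequality $\leq 2dg$.

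The main obstacle is the interplay between the linking form and the restriction map $r$: one must verify rigorously that the torsion part of $\ker r$ is not merely isotropic but a metabolizer for the relevant summand $\beta_2$, and that characters in this metabolizer lift to the cover of $W_d$ on which the Casson-Gordon signature formula can be applied. This is precisely Gilmer's refinement of the original Casson-Gordon framework and requires careful bookkeeping with the Bockstein sequence relating $H_1(W_d; \Z)$ to $H_2(W_d; \Q/\Z)$, with the naturality of the linking form under $r$, and with the splitting of the free part of $H_1(W_d)$ into the $G_1$ summand.
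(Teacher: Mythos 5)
The first thing to say is that the paper does not prove this lemma at all: it is imported from Gilmer \cite{gilmer}, and the paper's only original contribution is the remark immediately following it, namely that Gilmer's smooth-category proof carries over to the topological category by Freedman's work. So your proposal must be measured against Gilmer's published argument rather than against anything in this paper, and in outline it reconstructs that argument faithfully: the $d$--fold branched cover $W_d\to B^4$ along a genus-$g$ surface $F$, the Euler characteristic count $\chi(W_d)=1+2g(d-1)$, the kernel of $r\colon H_1(M_d(K))\to H_1(W_d)$ as the source of the metabolizer via duality, and Casson--Gordon signatures bounded by twisted Betti numbers controlled by the Euler characteristic $2gd$ of the $d$--fold cover of $B^4\setminus F$. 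This is the correct skeleton.

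Three concrete gaps remain. First, your entire argument lives in the smooth category (the Atiyah--Singer $G$--signature theorem requires a smooth action), whereas the lemma is stated for $\gtop(K)$ and a locally flat surface; you never invoke the Freedman--Quinn input needed to transfer the branched-cover and signature arguments, which is exactly the point the paper flags after the lemma. Second, your displayed signature identity double-counts the correction term: by the $G$--signature theorem applied to $W_d$ itself (with $[F]^2=0$ for a pushed-in surface) one has $\sigma(W_d)=\sum_{j=1}^{d-1}\sigma_{j/d}(K)$, so the quantity $\sigma\tau(K,d,\chi)+\sum_{j}\sigma_{j/d}(K)$ should equal the single twisted signature $\sigma_{\tilde\chi}(W_d)$; writing it as $\sigma_{\tilde\chi}(\widetilde{W_d})-\sigma(\widetilde{W_d})$ subtracts $\sigma(W_d)$ a second time, and bounding the two signatures separately would only yield $4d\,\gtop(K)$, not the stated $2d\,\gtop(K)$. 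Third, the genuinely hard step --- that $\beta^*$ splits \emph{orthogonally} as $\beta_1\oplus\beta_2$ with $G_1$ of the stated rank and $\beta_2$ honestly metabolic, rather than merely that the torsion part of $\ker r$ is isotropic --- is asserted but not executed; linking forms do not split along arbitrary subgroups, and carrying out this bookkeeping is the actual substance of Gilmer's refinement, as you yourself concede by calling it ``the main obstacle.'' A minor point: no finite cover of $W_d$ is needed to extend $\chi$; since $\Q/\Z$ is an injective $\Z$--module, any character vanishing on $\ker r$ extends over $H_1(W_d)$ directly, and the cover enters only when computing the twisted signature.
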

Note that Gilmer proved this statement in the smooth category, but it is known to carry over to the topological category by the
the work of Freedman.

The proof of \cref{thm:cg} strongly relies on the techniques Livingston used for proving his partial resolution of \cref{eq:opt}.
We will need the following construction:
\begin{lemma}[{\cite[Theorem 4.6]{livingstonseifertv2}}]\label{lem:livrel}
For every Seifert form $\theta$ of a knot, every positive number $C$, and every prime power $d$,
there is a knot $K$ realizing $\theta$ with $\sigma\tau(K, d, \chi) > C$ for all non-trivial characters $\chi\in H_1(M_d(K);\Z)^*$.\hfill\qed
\end{lemma}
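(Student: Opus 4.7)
The plan is to realize $\theta$ by \emph{infection} (Cochran--Orr--Teichner genetic modification). Start with any knot $K_0$ whose Seifert form is $\theta$, and perform infection along curves $\eta \subset S^3\setminus K_0$ that are null-homologous in $S^3\setminus K_0$; such infections do not change the S-equivalence class of the Seifert form (the infected knot $K$ shares a Seifert surface and Seifert matrix with $K_0$), so every knot produced will lie in $\mathcal{S}_\theta$, in fact in $\mathcal{E}_\theta$. The game is to choose the infection data so that $\sigma\tau(K,d,\chi)$ can be driven above $C$ simultaneously for every non-trivial character $\chi$ of $H_1(M_d(K);\Z)$, noting that the S-equivalence invariance of $H_1(M_d(\cdot);\Z)$ and of its linking form means the set of characters $\chi$ can be identified with that of $K_0$ and is finite.

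The key input is the change-of-infection formula for Casson--Gordon invariants: if $K$ is obtained from $K_0$ by infection along $\eta$ with infecting knot $J$, then
\[
\sigma\tau(K,d,\chi)\;=\;\sigma\tau(K_0,d,\chi)\;+\;\sum_{i=1}^{d} \sigma_{\omega_i(\chi,\eta)}(J),
\]
where $\omega_i(\chi,\eta)\in S^1$ is the value of $\chi$ on the $i$th lift of $\eta$ to $M_d(K_0)$, and $\sigma_\omega(J)$ denotes the Levine--Tristram signature of $J$ at $\omega$. Thus I need to choose $\eta$ (or a finite family $\eta_1,\dots,\eta_N$) such that for \emph{every} non-trivial character $\chi$ at least one lift $\eta_{j,i}$ satisfies $\omega_i(\chi,\eta_j)\neq 1$. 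This is achieved by picking the $\eta_j$ to be a collection of meridional-type curves whose total set of lifts in $M_d(K_0)$ generates $H_1(M_d(K_0);\Z)$: by nondegeneracy of the evaluation pairing, no non-trivial $\chi$ can vanish on all such lifts.

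Given such a family, I choose the infecting knots $J_j$ so as to make the added signature arbitrarily large and of controlled sign. Concretely, take $J_j$ to be a large connected sum of $(2,2k+1)$-torus knots, whose Levine--Tristram signatures are non-positive on all of $S^1$ and reach $-2k$ on large arcs avoiding $1\in S^1$; by scaling the number of summands we arrange
\[
\sum_{i=1}^{d}\sigma_{\omega_i(\chi,\eta_j)}(J_j)\;\leq\;-C-\max_{\chi\neq 1}|\sigma\tau(K_0,d,\chi)|
\]
for every non-trivial $\chi$ and at least one index $j$ detecting $\chi$, while the other $j$'s contribute non-positively. Finally I either work with $-K$ (taking the mirror reverses the sign of $\sigma\tau$) or use $(2,2k+1)$-torus knots with the opposite chirality so that the desired strict inequality $\sigma\tau(K,d,\chi)>C$ holds.

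The main obstacle is the construction of the infection curves in the second step: one needs $\eta_j$'s whose lifts to $M_d(K_0)$ jointly generate (or at least separate the dual of) $H_1(M_d(K_0);\Z)$, while each $\eta_j$ remains null-homologous in $S^3\setminus K_0$ and unknotted in $S^3$ so that infection is well-defined and preserves $\theta$. This is precisely the content of Livingston's construction (cf.\ the proof of \cite[Thm.~4.6]{livingstonseifertv2}): a curve transverse to a Seifert surface for $K_0$ and passing once through a chosen basis element lifts to a prescribed homology class in $M_d(K_0)$, and one obtains a curve null-homologous in $S^3\setminus K_0$ by pairing such a curve with its oppositely oriented parallel, so that enough such pairs generate $H_1(M_d(K_0);\Z)$. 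Once these curves are in place, the signature argument above finishes the proof.
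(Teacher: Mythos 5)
Your proposal is correct and follows essentially the same route as the paper, which in fact gives no proof of this lemma at all: it cites it directly to Livingston's Theorem 4.6 and merely remarks that $K$ is obtained from an arbitrary knot realizing $\theta$ by infection, which preserves the Seifert form while shifting the Casson-Gordon invariants by Levine--Tristram signatures of the infecting knots (Litherland's formula)---precisely the mechanism you flesh out, including separating all non-trivial characters by choosing infection curves whose lifts generate $H_1(M_d(K);\Z)$. Two small cautions: passing to the mirror $-K$ realizes $-\theta$ rather than $\theta$, so you must use your second option (infecting knots of the appropriate chirality with everywhere non-negative signatures); and to keep the Seifert matrix literally unchanged the infection curves should be chosen disjoint from the Seifert surface, not merely null-homologous in $S^3\setminus K_0$---which is what Livingston's curves are, since $H_1(M_d(K_0);\Z)$ is generated by lifts of loops in the complement of the Seifert surface.
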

The knot $K$ in \cref{lem:livrel} is constructed from an arbitrary knot $K'$ that realizes $\theta$
by \emph{infection}, i.e.~satellite operations that do not change the Seifert form, but affect the Casson-Gordon invariants
to an extent dependent on the signatures of the pattern knots (as was determined by Litherland~\cite{Litherland_79_SignaturesOFIteratedTorusKnots}).
\begin{proof}[Proof of \cref{thm:cg}]
Let a prime power $d$ be fixed, and set
\[
C = d\cdot \dim \theta + \sum_{j=1}^{d-1} \sigma_{j/d}(K).
\]
By \cref{lem:livrel}, there is a knot $K$ realizing $\theta$ with $\sigma\tau(K, d, \chi) > C$ for all non-trivial $\chi\in H_1(M_d(K);\Z)^*$.
Since $2\gtop(K) \leq \dim\theta$, no non-trivial character $\chi$ satisfies \cref{eq:cgineq}. Therefore, in the notation of \cref{gilmer},
$G_2$ is trivial, and thus $H_1(M_d(K);\Z)^* = G_1$ cannot be generated by less than $2(d-1)\gtop(K)$ elements. The statement of \Cref{thm:cg} follows.
\end{proof}
\begin{rmk}
Let us discuss how to apply \cref{thm:cg}. First of all, to determine
\begin{equation} \label{maxrd}
\max_{\substack{d\text{ prime}\\\text{power}}} \biggl\lceil \frac{r_d(\theta)}{2(d - 1)}\biggr\rceil,
\end{equation}
for the Seifert form $\theta$ of a knot,
it is not necessary to calculate $r_d(\theta)$ for all prime powers $d$; indeed, Livingston showed that
\cref{maxrd} is $0$ if and only if $\Alex{\theta}$ is the product of $n$--th cyclotomic polynomials with $n$ divisible by three distinct primes \cite{livingstonseifert}.
On the other hand, a short calculation yields that \cref{maxrd} can only be greater than $1$ if
$r_d(K) > 2(d-1)$ for some $d$, and because of \cref{eq:rd} this can only happen for $d \leq \deg\Alex{\theta}/2$.
These are the cases for which \cref{thm:cg} goes beyond Kim's \cref{thm:kim}.

One can also ask if \cref{thm:cg} can show that a Seifert form $\theta$ is realized by a knot $K$ without topological genus defect, i.e.~a
knot with $\gtop(K) = g(K)$. One checks that if $\dim \theta > 2$, this can only be accomplished by $d = 2$, namely if $r_2(\theta) \in \{\dim \theta, \dim\theta - 1\}$.
\end{rmk}
\begin{example}
Let us give a concrete example of a Seifert form for which \cref{q:galg=bestclassicalupperbound} is open.
Namely, take $\theta$ to be a Seifert form of the knot $K$, which is $10_{103}$ in Rolfsen's table, given by the following matrix:
\[
\begin{pmatrix}
0 & 0 & 1 & 0 & 0 & 0\\
0 & 0 & 0 & 0 & 0 & -1\\
0 & -2 & -1 & -1 & -1 & 0\\
0 & 0 & 1 & 1 & 0 & 0\\
0 & 0 & 1 & 1 & 1 & -1\\
-1 & -2 & -1 & -2 & -2 & 1
\end{pmatrix}.
\]
The first and second standard basis vector generate an isotropic subgroup of rank $2$, so $t(\theta) \leq 1$.
One computes the signature to be $2$, and so $t(\theta) \geq 1 \Rightarrow t(\theta) = 1$.
On the other hand, the first and third standard basis vector generate an Alexander-trivial subgroup of rank $2$,
so $\ga(\theta) \leq 2$, and moreover $\ua(\theta) = 3$ \cite{knotorious}, which implies $\ga(\theta) \geq 2 \Rightarrow \ga(\theta) = 2$.
Now, the smooth slice genus of $K$ happens to be $1$, and so $\gtop(K) = 1$.
But can $\theta$ be realized by another knot $K'$ with $\gtop(K') = 2$? If $r_2(\theta)$ were at least $3$,
this would follow from \cref{thm:cg}, but $M_d(\theta) = \mathbb{Z}/15 \oplus \mathbb{Z}/5$, whence $r_2(\theta) = 2$.
So we do not know whether $\max_{K\in\mathcal{S}_{\theta}} \gtop(K)$ is 1 or 2.
\end{example}

\begin{rmk}\label{rmk:opt}
We end this section by providing answers to some related questions on optimality of classical slice genus bounds. All of these answers are rather immediate from standard results, but we provide them for completeness.

The algebraic concordance class of a Seifert form yields no upper bound for the topological slice genus:
\begin{equation}\label{i}
\max_{L\in\mathcal{C}_{\theta}} \gtop(L)  = \infty.
\end{equation}
The Seifert form gives only trivial upper bounds for the smooth slice genus:
\begin{align}
\max_{L\in\mathcal{E}_{\theta}} \gs(L) & = \rk(\theta - \theta^{\top}) / 2\qquad\text{and}
\label{ii} \\
\max_{L\in\mathcal{S}_{\theta}} \gs(L) & = \infty.  \label{iii}
\end{align}
\end{rmk}
\begin{proof}[Proof of \cref{i},\cref{ii},\cref{iii}]
Let $\zeta$ be the (algebraically slice) Seifert form given by the matrix
\[
\begin{pmatrix} 0 & 2 \\ 1 & 0 \end{pmatrix}.
\]
Then for any $k \geq 0$, the form $\theta_k = \theta \oplus \zeta^{\oplus k}$ is algebraically concordant to $\theta$.
Moreover, the first homology of the double branched covering of a knot with Seifert form $\theta_k$
has minimum number of generators at least $2k$. Applying \cref{thm:cg} settles \cref{i}.

Statement \cref{ii} may be proven as in \cite{realiseTau}: it is Rudolph's result \cite{Rudolph_83_ConstructionsOfQP1} that every Seifert form $\theta$ may be realized as the Seifert form of a quasipositive Seifert surface $F$. As a consequence of the slice-Bennequin inequality, which Rudolph proved \cite{rudolph_QPasObstruction} building
on Kronheimer and Mrowka's resolution of the Thom Conjecture \cite{KronheimerMrowka_Gaugetheoryforemb},
the smooth slice genus of $\partial F$ equals its three-genus, which is $g(F) = \rk(\theta - \theta^{\top}) / 2$.
This shows \cref{iii} as well, since $\mathcal{S}_{\theta}$ contains Seifert forms of arbitrarily high dimension.
\end{proof}

\section{Reformulation of previously known results in terms of $\ga$}\label{sec:reformulate}
By finding a separating curve with Alexander polynomial 1 on a minimum genus Seifert surface and using Freedman's Theorem \cref{eq:Alex1<=>boundsstddisk}, one can show that $\gtop$ is smaller than the three-dimensional genus. This was used by Rudolph to show that $\gtop(T)<g(T)$ for most torus knots; in fact even $\gtop(T)\leq\frac{9}{10}g(T)$~\cite{Rudolph_84_SomeTopLocFlatSurf}.
Baader used this idea to show that if a minimal genus Seifert surface for a knot $K$ contains an embedded annulus with framing $\pm 1$, then $\widehat{\gtop}(K) = g(K)$ if and only if $|\sigma(K)| = 2g(K)$~\cite{baaderIndef}, a result that we generalized in \cref{prop:charstabgalg<g}.

It turns out that the existence of separating Alexander polynomial 1 knots on Seifert surfaces is completely determined by the Seifert form; compare~\cref{prop:sepcurve}. The following results by Baader, Liechti, McCoy and the authors were proven using some version of this. We present them rewritten in the language of $\ga$, while suppressing the inequalities $\gtop\leq\gst\leq\ga$:\\

\begin{itemize}[leftmargin=1.6em,label={--}]

\item
For all knots $K$ \cite{Feller_15_DegAlexUpperBoundTopSliceGenus}:\qquad
$\displaystyle
\ga(K) \leq \frac{\deg\Alex{K}}{2}.
$\\
\item
For prime homogeneous knots $K$ that are not positive or negative \cite{BaaderLewark_15_Stab4GenusOFAltKnots}:
\[
\widehat{\ga}(K) \leq g(K) - \frac{1}{3}.
\]

\item There is an infinite family of 2--bridge knots $K$ satisfying \cite{FellerMcCoy_15}
\[
\ga(K) < \gs(K) = g(K).
\]

\item The algebraic genus of torus links satisfies \cite{BaaderFellerLewarkLiechti_15}
\[
\frac{1}{2} \leq \lim_{p,q\to\infty} \frac{\ga(T_{p,q})}{g(T_{p,q})} \leq \frac{3}{4}
\]
and for $p \geq q \geq 3$, and $(p,q)\not\in\{(3,3),(4,3),(5,3),(6,3),(4,4)\}$:
\[
\frac{1}{2} \leq \frac{\ga(T_{p,q})}{g(T_{p,q})} \leq \frac{6}{7}=\frac{\ga(T_{8,3})}{g(T_{8,3})}.
\]

\item For all prime knots with up to 11 crossings, one has \cite{mccoylewark}
\[
\gtop(K) = \min\{\ga(K), \gs(K)\}.
\]

\item For positive braid knots $K$ with $\sigma(K) < 2g(K)$ one has \cite{LiechtiBraids}
\[
\ga(K) < g(K).
\]
\end{itemize}

\bibliographystyle{myamsalpha}
\bibliography{peterbib}
\end{document}